\documentclass[11pt,a4paper,reqno]{amsart}
\usepackage{amsmath,amssymb,amsfonts,epsfig,mathrsfs,cite, hyperref}
\usepackage[T1]{fontenc}
\usepackage{color}
\usepackage{array}
\usepackage{amsthm}
\usepackage{amstext}
\usepackage{graphicx}
\usepackage{setspace}

\usepackage[title]{appendix}

\usepackage{booktabs}

\usepackage{tcolorbox}

\usepackage{float}

\makeatletter
\@namedef{subjclassname@2020}{%
  \textup{2020} Mathematics Subject Classification}
\makeatother

\usepackage[margin=2.5cm]{geometry}
\usepackage{color}
\usepackage{enumitem}
\usepackage{amscd,psfrag}
\usepackage{yhmath}
\usepackage[mathscr]{eucal}
\usepackage{comment}

\allowdisplaybreaks[4]

\usepackage{slashed}

\makeatletter
\pdfpageheight\paperheight
\pdfpagewidth\paperwidth

\usepackage{epstopdf}
\usepackage{indentfirst}	

\usepackage[normalem]{ulem}
\theoremstyle{plain}
\newtheorem{definition}{Definition}
\newtheorem{theorem}[definition]{Theorem}
\newtheorem*{theorem*}{Theorem}

\newtheorem*{remark*}{Remark}
\newtheorem*{sideremark*}{Side Remark}

\newtheorem*{claim*}{Claim}
\newtheorem*{lemma*}{Lemma}
\newtheorem*{q*}{Question}
\newtheorem{lemma}[definition]{Lemma}
\newtheorem{corollary}[definition]{Corollary}
\newtheorem*{corollary*}{Corollary}
\newtheorem{example}[definition]{Example}
\newtheorem{proposition}[definition]{Proposition}

\newtheorem{conjecture}[definition]{Conjecture}

\newcommand{\R}{\mathbb{R}}

\newcommand{\na}{\nabla}
\newcommand{\emb}{\hookrightarrow}

\newcommand{\p}{\partial}
\newcommand{\loc}{{\rm loc}}
\newcommand{\weak}{\rightharpoonup}
\newcommand{\e}{\epsilon}

\newcommand{\dd}{{\rm d}}

\newcommand{\G}{\Gamma}

\newcommand{\M}{{\mathcal{M}}}

\newcommand{\mres}{\mathbin{\vrule height 1.6ex depth 0pt width
0.13ex\vrule height 0.13ex depth 0pt width 1.3ex}}

\newcommand{\K}{{\mathcal{K}}}

\newcommand{\bigs}{\mathscr{S}}

\def\Xint#1{\mathchoice
{\XXint\displaystyle\textstyle{#1}}%
{\XXint\textstyle\scriptstyle{#1}}%
{\XXint\scriptstyle\scriptscriptstyle{#1}}%
{\XXint\scriptscriptstyle\scriptscriptstyle{#1}}%
\!\int}
\def\XXint#1#2#3{{\setbox0=\hbox{$#1{#2#3}{\int}$ }
\vcenter{\hbox{$#2#3$ }}\kern-.6\wd0}}

\def\dashint{\Xint-}

\newcommand{\N}{{\mathcal{N}}}

\newcommand{\tdn}{{\mathcal{T}}_\delta(\N)}

\newcommand{\ball}[2]{\mathbf{B}_{#1}({#2})}

\newcommand{\spt}[1]{{\bf spt}({#1})}

\newcommand{\E}{\mathcal{E}}

\newcommand{\bigo}{\mathcal{O}}

\newcommand{\stwo}{{\bf S}^2}
\newcommand{\bthree}{{\bf B}^3}
\newcommand{\sing}{{\rm sing}}
\newcommand{\singu}{\sing(u)}

\newcommand{\leb}{{\mathcal{L}}}
\newcommand{\hau}{{\mathcal{H}}}

\newcommand{\bone}{{\mathbf{B}_1}}

\newcommand{\F}{{\mathcal{F}}}

\title{A brief introduction to the regularity theory of minimising harmonic maps} 

\author{Siran Li}

\address{Siran Li: School of Mathematical Sciences $\&$ CMA-Shanghai, Shanghai Jiao Tong University, No.~6 Science Buildings,
800 Dongchuan Road, Minhang District, Shanghai, China (200240)}

\email{\texttt{siran.li@sjtu.edu.cn}} 

\keywords{Harmonic map; regularity; minimising harmonic map; stationary harmonic map; rectifiability; singular set}

\subjclass[2020]{35B65, 58E20}
\date{\today}

\begin{document}

\begin{abstract}
We introduce various classical, foundational results on the regularity theory of harmonic maps, with focuses on the energy minimising harmonic maps. Topics include inner and outer variations, the monotonicity identity, the $\varepsilon$-regularity theorem, blow-ups and tangent maps, and the dimension estimate and stratification of singular sets. We also discuss the analogous theory for stationary harmonic maps and mention some significant recent developments.

\end{abstract}
\maketitle

%\tableofcontents

\section{Description}

These notes have been written based on a three-hour mini-course delivered by the author at the \emph{Winter School on Geometric Measure Theory
Rectifiability \textit{vs.} Pure Unrectifiability}, held at Westlake University, Hangzhou, from 1st to 6th February, 2026. The mini-course has been designed to provide a brief and accessible introduction to foundations of the regularity theory of harmonic maps developed about 1980-90s. The analytic theory of harmonic maps has long been a central topic in geometric variational problems, nonlinear elliptic systems, and geometric measure theory, and has also found numerous applications in physics, material sciences, and biology, etc. Our presentation focuses mainly on the Dirichlet energy minimising harmonic maps; nevertheless, whenever adequate, we shall hinge on the more general --- and more challenging --- theory of stationary harmonic maps. Efforts are made to provide a balanced combination of both the PDE and geometric variational aspects of the harmonic map theory.

These notes may serve as a streamlined version for (though certainly not a substitute for) most of the materials in the first three chapters of L. Simon's 1996 notes~\cite{simon}. We slightly reorganise and restructure the materials therein, and present alternative or detailed arguments for several results, especially Proposition~\ref{propn: outer variation}, Theorem~\ref{thm: codimenion 3}, and Lemma~\ref{lem: tech}. However, we do not claim originality for any material in our notes. Meanwhile, we do not venture to touch on every aspect of harmonic map theory. The topics and references in these notes are not exhaustive: we apologise for any omission. We bring to the reader's attention that many problems listed in R. Hardt's 1997 report~\cite{hardt} has remained open until today, including those pertaining to low-dimensional harmonic maps, \textit{e.g.}, those from $\bthree$ into $\stwo$. We also recommend that the reader study the 2008 monograph by Lin--Wang~\cite{lw}, which provides a comprehensive survey of the theory of harmonic maps as it stood prior to the developments by Naber--Valtorta~\cite{nv}.

In \S\ref{sec: hm} we study the variational and PDE formulations for harmonic maps, highlighting the distinction between minimising, stationary, and weakly harmonic maps. In \S\ref{sec: mono} we present the monotonicity formulae and the $\e$-regularity theory for stationary harmonic maps. (Note, though, that the proof in \S\ref{sec: mono} of $\e$-regularity is given only for minimising harmonic maps, as in Simon's notes~\cite{simon}.) Then, in \S\ref{sec: blowup}, we introduce the blow-up analysis of minimising harmonic maps at singular points. In particular, by analysing the tangent maps obtained as blow-up limits, we prove a dimension estimate for the singular set and introduce the stratification of singular set based on the degrees of symmetry of the tangent maps. Finally, we briefly discuss in \S\ref{sec: final} some more recent developments on the analytic theory of (mostly, stationary) harmonic maps.

\section{Minimising, stationary, and weakly harmonic maps}\label{sec: hm}

As the name suggests, a minimising harmonic map from $\Omega \subset \R^n$ to a manifold $\N$ is a minimiser of the Dirichlet energy. In these notes, we restrict ourselves to maps $u: \Omega \subset \R^n \to (\N,g)$ from a Euclidean domain to a Riemannian manifold. The regularity theory for harmonic maps in the general case $u: (\M,h)\to(\N,g)$, where $(\M,g)$ is another Riemannian manifold, follows from direct adaptations of the case of Euclidean domains.\footnote{Somewhat a rule of thumb: in the study of maps $u:(\M,h) \to (\N,g)$ (for harmonic maps in our case, or for the analysis of Sobolev spaces $W^{1,p}(\M;\N)$...), it is always the target manifold $\N$ that causes more complications, rather than the source manifold $\M$.} See, \emph{e.g.},\cite[Section~5]{lin}.

Let $u: \Omega \to \N$ and $\Omega' \subset \Omega$, and consider 
\begin{equation}\label{Dir energy}
    \E_{\Omega'}[u]:= \frac{1}{2}\int_{\Omega'} |\na u|^2\,\dd x,
\end{equation}
with the norm evaluated with respect to the Euclidean metric on $\Omega$ and the Riemannian metric on $\N$. We take $u$ in the Sobolev space $W^{1,2}(\Omega;\N)$ defined as follows: for a $C^k$-manifold $\N$ with $k \geq 3$, by Nash's embedding theorem, $\N$ can be regarded as a submanifold of the Euclidean space $\R^p$ for some $p$. We set
\begin{align*}
W^{1,2}(\Omega;\N) := \Big\{u \in W^{1,2}(\Omega;\R^p):\, \text{$u(x) \in \N$ for a.e. $x \in \Omega$}\Big\}.
\end{align*}
We say that $u \in W^{1,2}(\Omega;\N)$ is a minimising harmonic map on $\Omega$ iff $\E_{\Omega}[u] \leq \E_{\Omega}[u']$ for any $u'\in W^{1,2}(\Omega \subset \R^n; \N)$ with $u=u'$ on $\p{\Omega}$ in the sense of trace. By considering suitable cutoff functions, this is equivalent to $\E_{\ball{\rho}{y}}[u] \leq \E_{\ball{\rho}{y}}[u']$ for any $u'\in W^{1,2}(\Omega \subset \R^n; \N)$ with $u=u'$ on $\p{\ball{\rho}{y}}$ in the sense of trace.

As with many variational problems, an important approach to the study of analytic or geometric structures of minimisers for certain energy functionals is to analyse the corresponding \emph{Euler--Lagrange equations}, namely the PDE obtained via
\begin{align*}
    \frac{d}{ds}\bigg|_{s=0} \E_{\Omega}\left[u_s\right] = 0\qquad \text{for all variations $\{u_s\}_{s \in ]-\e,\e[}$ of $u$}.
\end{align*}
Here, by ``variations'' we mean $\{u_s\}_{s \in ]-\e,\e[} \subset W^{1,2}(\Omega;\R^p)$ for suitably small $\e>0$ such that $u_0\equiv u$ and $s \mapsto  u_s$ is a $C^1$-map from $]-\e,\e[$ to $W^{1,2}(\Omega;\R^p)$.

We consider two types of variations, one obtained by perturbing the range variable (``outer variations''),  and the other by perturbing the domain variable (``inner variations'').

\subsection{Outer variations}
Let $\N$ be a $C^k$-submanifold of $\R^p$; $k \geq 3$ and $\dim\N=N\leq p$. By an application of the inverse function theorem, the nearest point projection $\Pi$ onto $\N$ is a well defined $C^{k-1}$-map over the $\delta$-tubular neighbourhood of $\N$ for some $\delta = \delta(\N)>0$:
\begin{align*}
    &\Pi: \tdn\equiv\left\{x \in \R^p:\, {\rm dist}(x,\N)<\delta\right\} \longrightarrow \N,\\
    & \Pi(x) \text{ is the unique point on $\N$ such that $|x-\Pi(x)|={\rm dist}(x,\N)$}.
\end{align*}

An outer variation is given by 
\begin{equation}
    u^{\rm outer}_s (x) := \Pi\Big(u(x)+s\zeta(x)\Big) : \Omega \to \N
\end{equation}
for some test function $\zeta$. If $u$ is an energy minimiser over any balls in $
\Omega$, then \begin{equation}\label{outer var}
    \frac{d}{ds}\bigg|_{s=0} \E_{\ball{\rho}{y}}\left[u^{\rm outer}_s\right] = 0\qquad\text{for any } \ball{\rho}{y} \Subset \Omega \text{ and } \zeta \in C^\infty_0\left(\ball{\rho}{y}; \R^p\right).
\end{equation}
Here $|s|$ is so small that $u+s\zeta$ takes values in $\tdn$. 

\begin{proposition}\label{propn: outer variation}\footnote{It is customary is PDE literature to refer to Equations~\eqref{weak harmonic map eq, 0}, \eqref{weak harmonic map eq, 1}, or \eqref{weak harmonic map eq, 2} as ``the harmonic map equation''.}
Suppose $u \in W^{1,2}(\Omega;\N)$ is a critical point of the Dirichlet energy under outer variations; \emph{i.e.}, the condition~\eqref{outer var} holds. Then 
    \begin{align}\label{weak harmonic map eq, 0}
(\Delta u)^T = 0\qquad\text{ on }\Omega,
    \end{align}
where $(\Delta u)^T$ is the orthogonal projection of $\Delta u:\Omega \to \R^p$ to $T\N$. This is equivalent to \begin{equation}\label{weak harmonic map eq, 1}
-\p_i\p_i u +  {\rm Hess}_u\Pi(\p_iu,\p_iu) = 0 \qquad\text{ on }\Omega
\end{equation}
and
\begin{equation}\label{weak harmonic map eq, 2}
\p_i\p_i u +  A_u(\p_iu,\p_iu) = 0 \qquad\text{ on }\Omega,
\end{equation}
where $A_u: \G(T_u\N)\times\G(T_u\N)\to\G(T_u\N^\perp)$ is the second fundamental form of $\N \subset \R^p$. 
\end{proposition}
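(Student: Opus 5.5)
I will compute the first variation directly, using the chain rule for $\Pi$ and the two facts about the nearest point projection at points of $\N$. For $y\in\N$, $d\Pi_y = P_y$ is the orthogonal projection onto $T_y\N$, since $\Pi|_\N = \mathrm{id}$ and $\Pi$ is constant along normal fibres. Second, for $v,w\in T_y\N$ the Hessian ${\rm Hess}_y\Pi(v,w)$ is normal, and it equals $-A_y(v,w)$. This follows by differentiating $P_{\gamma(t)}\dot\gamma(t)=\dot\gamma(t)$ along a curve $\gamma$ in $\N$ with $\gamma(0)=y$, $\dot\gamma(0)=v$: this gives ${\rm Hess}_y\Pi(v,v)+P_y\ddot\gamma(0)=\ddot\gamma(0)$, so ${\rm Hess}_y\Pi(v,v)=\ddot\gamma(0)^\perp=A_y(v,v)$ up to the sign convention for $A$; one then polarises.

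With these facts in hand, the expansion is $\Pi(u+s\zeta)=u+sP_u\zeta+o(s)$ in $W^{1,2}$, and the differentiation under the integral is justified because $\Pi$ is $C^2$ with bounded derivatives on $\tdn$. Hence
\[
0=\frac{d}{ds}\Big|_{s=0}\E[u_s^{\rm outer}]=\int \p_iu\cdot\p_i\big(P_u\zeta\big)\,\dd x .
\]
Expanding $\p_i(P_u\zeta)=P_u\p_i\zeta + {\rm Hess}_u\Pi(\p_iu,\zeta)$, and using $P_u\p_iu=\p_iu$ a.e. (because $u\in\N$ a.e., so $\p_iu\in T_u\N$ a.e.), we get
\[
\int \p_iu\cdot\p_i\zeta+ \p_iu\cdot{\rm Hess}_u\Pi(\p_iu,\zeta)\,\dd x=0 .
\]
The symmetry identity $\p_iu\cdot{\rm Hess}_u\Pi(\p_iu,\zeta)=\zeta\cdot{\rm Hess}_u\Pi(\p_iu,\p_iu)$ holds for $\p_iu$ tangent. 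It comes from differentiating $P\circ\Pi$ and using the self-adjointness of $d\Pi$ on $\N$; concretely, split $\zeta$ into tangent and normal parts and use $\nu\cdot{\rm Hess}\Pi(v,w)=$ the second fundamental form paired with $\nu$ for normal $\nu$. This identity yields the weak form of \eqref{weak harmonic map eq, 1}. The right-hand side lies in $L^1$, so the equation holds in $\mathcal{D}'$. Since ${\rm Hess}_u\Pi(\p_iu,\p_iu)=-A_u(\p_iu,\p_iu)$, \eqref{weak harmonic map eq, 1} and \eqref{weak harmonic map eq, 2} are the same equation.

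For \eqref{weak harmonic map eq, 0}, choose $\zeta$ in the weak form to be $P_u\varphi$ with $\varphi$ a test function. When $u\in W^{1,2}\cap L^\infty$ this is admissible by approximation, since $P_u\varphi\in W^{1,2}_0$. The equation then says that $\Delta u$ equals a normal vector field. Equivalently, pairing $-\Delta u$ with $P_u\varphi$ gives $\int \p_iu\cdot\p_i(P_u\varphi)=0$, which is exactly the identity above, and this is the meaning of $(\Delta u)^T=0$ in the distributional sense. Conversely, if $(\Delta u)^T=0$ then $\Delta u=(\Delta u)^\perp$. Computing $(\Delta u)^\perp$ via $\p_i(P_u\p_iu)=\p_iu$ shows $(\Delta u)^\perp=-A_u(\p_iu,\p_iu)$, which recovers \eqref{weak harmonic map eq, 2}.

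The main obstacle is making the formal calculus rigorous at the $W^{1,2}$ level. Three points need care: the chain rule $\p_i(\Pi\circ(u+s\zeta))=d\Pi\,\p_i(u+s\zeta)$ for Sobolev maps, where $\N$ is compact so $\Pi$ is Lipschitz with bounded derivatives; the differentiability in $s$, which I would handle by dominated convergence; and the interpretation of $(\Delta u)^T$ when $\Delta u$ is only a distribution, which I would handle via the pairing with $P_u\varphi$. The geometric identities relating ${\rm Hess}\,\Pi$ and $A$ I expect to be routine once the sign convention for $A$ is fixed.
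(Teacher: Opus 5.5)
Your proposal is correct and follows essentially the same route as the paper: you differentiate $\Pi(u+s\zeta)$ to get $\int \p_iu\cdot\{\p_i\zeta+{\rm Hess}_u\Pi(\p_iu,\zeta)\}\,\dd x=0$, move the Hessian onto $\zeta$ using that $\p_iu$ is tangent, integrate by parts, and identify ${\rm Hess}_u\Pi(\p_iu,\p_iu)=(\Delta u)^\perp=-A_u(\p_iu,\p_iu)$ by differentiating $\Pi\circ u=u$ twice. Your sign $\p_iu\cdot{\rm Hess}_u\Pi(\p_iu,\zeta)=+\,\zeta\cdot{\rm Hess}_u\Pi(\p_iu,\p_iu)$, which comes from the symmetry of the derivative of $P$ along $\N$, is the right one; the minus sign displayed in the paper's proof is a slip, since only the plus sign gives \eqref{weak harmonic map eq, 1} after integration by parts, and in your converse step you mean differentiating $P_u\p_iu=\p_iu$, not $\p_i(P_u\p_iu)=\p_iu$.
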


We first collect some rudiments of Euclidean geometry/multivariable calculus. Throughout,   denote by $D$ the usual derivative (\emph{i.e.}, the Levi-Civita connection) on the ambient Euclidean space $\R^p$. For two vectorfields $v, w \in \G(T\R^p)$, it holds that $D_{v}w=v^\alpha \p_\alpha w = v \cdot Dw$. Then, by writing $\Pi = (\Pi^1,\cdots, \Pi^p)^\top=\Pi^\beta\p_\beta$, we obtain for each $v \in \G(T\R^p)$ and $y \in \tdn$ that 
\begin{align}\label{T-proj}
    v\cdot D\Pi\big|_y &= v^\alpha \p_\alpha \Pi^\beta\p_\beta\big|_y=D_v\Pi\big|_y=d_y\Pi(v) \nonumber\\
    & = v^T\big|_y := \text{the orthogonal projection of $v$ on $T_{\Pi(y)}\N$},
\end{align}
where the exterior differential is understood as $d:T_y\R^p \to T_{\Pi(y)}\N$. Throughout, we shall label $i,j,k,\ldots \in \{1,\ldots,d\}$ and $\alpha,\beta,\gamma,\ldots \in \{1,\ldots, p\}$. 
Here and hereafter, we denote the orthogonal decomposition of a vectorfield
\begin{align*}
    v = v^T + v^\perp,\qquad v^T \in \G(T\N) \text{ and } v^\perp \in \G(T\N^\perp).
\end{align*}

The Hessian of $\Pi:\tdn\subset\R^p\to\N$,  is the bilinear form  
\begin{align*}
    {\rm Hess}_y\, \Pi(v,w) := \left(D_vD_w\Pi\right)\big|_y -\big(D_{D_vw}\Pi\big)\big|_y
\end{align*}
for each $y \in \tdn$ and $v,w \in \G(T\R^p)$. The presence of the second term ensures the tensoriality of Hessian. It follows that
\begin{align}\label{hess-a}
{\rm Hess}_y\, \Pi(v,w) = D_v (w^T)\big|_y - \big(D_vw\big)^T\Big|_y\qquad \text{for $y \in \tdn$; $v,w \in \G(T\R^p)$}. 
\end{align}
The following two special cases are of particular relevance for us:
\begin{itemize}
    \item 
If $v,w \in \G(T\N)$, then   \begin{align}\label{hess-b}
{\rm Hess}_y\, \Pi(v,w) = D_v w\big|_y - \big(D_vw\big)^T\Big|_y = \big(D_vw\big)^\perp\Big|_y. 
\end{align}
\item 
If, furthermore, $v=\p_j u$ and $w = \p_k u$, then
\begin{align}\label{hess-c}
{\rm Hess}_u \Pi(\p_j u, \p_ku) = \left(\p_j\p_ku\right)^\perp.
\end{align}
Indeed, differentiation of $\Pi\circ u = u$ gives us $\p_j u^\alpha \p_\alpha \Pi\big|_u =\p_ju$, and a further differentiation yields that $   (\p_j\p_k u^\alpha)\left(\p_\alpha\Pi\circ u\right) + \left(\p_ju^\alpha\right)(\p_k u^\beta)\left(\p_{\alpha\beta}\Pi \circ u\right) = \p_j\p_ku.$ We then conclude \eqref{hess-c} by noting that $(\p_j\p_k u^\alpha)\left(\p_\alpha\Pi\circ u\right) \equiv D_{\p_j\p_ku}\Pi\big|_u \equiv ({\p_j\p_ku})^T$.
\end{itemize}

\begin{proof}[Proof of Proposition~\ref{propn: outer variation}]
Let us compute by the chain rule, Taylor's expansion,  the definition of Hessian, and \eqref{T-proj}:
\begin{align*}
    \p_iu^{\rm outer}_s &= \p_i(u+s\zeta) \cdot D\Pi \big|_{u+s\zeta}\\
    &= \p_i(u+s\zeta)\cdot \Big\{D\Pi\big|_{u} + sD^2\Pi\big|_u \cdot \zeta + \bigo(s^2) \Big\}\\
    &= \p_iu + s\bigg\{\left(\p_i\zeta\right)^T + {\rm Hess}_u  \Pi(\p_i u,\zeta) \bigg\}+\bigo(s^2).
\end{align*}
Substituting this into \eqref{outer var} and noting that $\p_iu=(\p_iu)^T$, we have
\begin{align}\label{test}
0 = \int_\Omega \p_i u\cdot\Big\{\p_i\zeta+{\rm Hess}_u  \Pi(\p_i u,\zeta) \Big\} \,\dd x\quad \text{for any } \zeta \in C^\infty_0(\ball{\rho}{y};\R^p).
\end{align}

Notice that
\begin{align*}
    \p_iu \cdot {\rm Hess}_u  \Pi(\p_i u,\zeta) &= \p_iu \cdot {\rm Hess}_u  \Pi\left(\p_i u,\zeta^T+\zeta^\perp\right) = \p_iu \cdot {\rm Hess}_u  \Pi\left(\p_iu,\zeta^\perp\right)\\
    &= -\zeta^\perp \cdot {\rm Hess}_u\Pi(\p_iu,\p_iu)= -\zeta \cdot {\rm Hess}_u\Pi(\p_iu,\p_iu),
\end{align*}
thanks to \eqref{hess-b} and the following observation: for $a,b\in\G(T\N)$ and $\eta \in \G(T\N^\perp)$, taking $D_b$ to $a\cdot\eta=0$ leads to $0=a\cdot D_b\eta + \eta \cdot D_ba$. Thus, by integration by parts, we deduce from \eqref{test} that
\begin{equation*}
-\p_i\p_i u +  {\rm Hess}_u\Pi(\p_iu,\p_iu) = 0,
\end{equation*}
which is \eqref{weak harmonic map eq, 1}. Meanwhile, by \eqref{hess-c} we have ${\rm Hess}_u\Pi(\p_iu,\p_iu) = (\Delta u)^\perp$, so \eqref{weak harmonic map eq, 0} follows. Finally, by the definition of the second fundamental form, we have $
    A_u(\p_iu,\p_iu) = -\left(D_{\p_iu}(\p_iu)\right)^\perp$.  But $\p_iu \equiv (\p_iu)^T$, so by the definition of Hessian in  \eqref{hess-a}, 
\begin{align*}
    A_u(\p_iu,\p_iu) = -{\rm Hess}_u(\p_iu, \p_iu). 
\end{align*}
This proves \eqref{weak harmonic map eq, 2}.  \end{proof}

\subsection{Inner variations} 
Now we consider the \emph{inner variations}:
\begin{equation}\label{inner var, def}
    u^{\rm in}_s(x) := u\left(x+s\varphi(x)\right),\qquad \varphi = \left(\varphi^1, \cdots, \varphi^n\right)^\top \in C^\infty_0\left(\Omega;\R^n\right).
\end{equation}
 Since $u$ is an energy minimiser, it holds that
\begin{equation}\label{inner var}
    \frac{d}{ds}\bigg|_{s=0} \E_{\Omega}\left[u^{\rm in}_s\right] = 0\qquad\text{for any } \varphi \in C^\infty_0\left(\Omega;\R^n\right).
\end{equation}

By the chain rule, we compute that
\begin{align*}
    \p_i \left(u^{\rm in}_s(x)\right) &= (\p_iu)(x+s\varphi(x)) + s \sum_{j=1}^n \p_i\varphi^j(x) \p_ju(x+s\varphi(x)).
\end{align*}
Heuristic arguments involving the Taylor expansion for $u$ up to the second order show that right-hand side equals
\begin{align}\label{heuristic}
(\p_iu)(x) + \left\{s\sum_{k=1}^n\varphi^k(x) \p_k\p_iu(x) + \bigo(s^2) \right\}+ \left\{s\sum_{j=1}^n \p_i\varphi^j(x) \p_ju(x) + \bigo(s^2)\right\}.
\end{align}
Then, taking any $\ball{\rho}{y} \Subset \Omega$ and specialising to $\varphi \in C^\infty_0(\ball{\rho}{y};\R^n)$, one deduces that
\begin{align}\label{stationary hm}
\frac{1}{2} \frac{d}{ds}\bigg|_{s=0} \E_{\ball{\rho}{y}}\left[u^{\rm in}_s(x)\right] &= \int_{\ball{\rho}{y}} \p_iu(x)\cdot \sum_{k=1}^n  \left(\varphi^k(x) \p_k\p_iu(x)+\p_i\varphi^k\p_k u\right)\,\dd x\nonumber\\
&= \int_{\ball{\rho}{y}} \left\{\frac{1}{2}\varphi\cdot\na \left|\na u\right|^2 +\na\varphi:(\na u \otimes \na u)\right\}\,\dd x \nonumber\\
&= \int_{\ball{\rho}{y}} \na\varphi:\left(\na u \otimes \na u - \frac{1}{2}|\na u|^2\,{\bf Id}\right)\,\dd x = 0.
\end{align}
Equivalently, in local coordinates, 
\begin{align}\label{stationary hm, coord}
\sum_{i,j,k=1}^n     \int_{\ball{\rho}{y}} \p_i \varphi^j \left\{\p_ju^k \p_iu^k - \frac{1}{2}|\na u|^2\delta_{ij}\right\}\,\dd x = 0
\end{align}
where $|\na u|^2 = \sum_{i,j=1}^n\left(\p_i u^j\right)^2$. Another way to write~\eqref{stationary hm} is 
\begin{align}
    {\rm Div}\left(\na u \otimes \na u - \frac{1}{2}|\na u|^2\,{\bf Id}\right)=0\qquad \text{on } \ball{\rho}{y},
\end{align}
where ${\rm Div}$ is the row-wise divergence of a matrix field.

To make the above heuristic arguments rigorous for $u \in W^{1,2}$, observe that the mapping $\Psi_s(x):=x+s\varphi(x)$ is a $C^\infty$-diffeomorphism of $\ball{\rho}{y}$ into itself for $|s|$ sufficiently small and $\varphi \in C^\infty_0(\ball{\rho}{y};\R^n)$ given. Then we have
\begin{align*}
\det(D\Psi_s) = \det(I + sD\varphi) = 1+s\,{\rm Tr}(D\varphi) + \bigo(s^2) = 1+s\,{\rm div}(\varphi) + \bigo(s^2)
\end{align*}
and $\Phi_s(y) = y-s\varphi(y)+\bigo(s^2)$ for $\Phi_s \equiv \Psi_s^{-1}$. It follows that
\begin{align*}
D\Psi_s\big(\Phi_s(y)\big) = I+sD\varphi(y) + \bigo(s^2)
\end{align*}
and
\begin{align*}
 \Big(\det D\Psi_s\big(\Phi_s(y)\big)\Big)^{-1} = 1 -s\,{\rm div}\big(\varphi(y)\big) + \bigo(s^2).
\end{align*}
We may thus express the Dirichlet energy as follows:
\begin{align*}
\E_{\ball{\rho}{y}}\left[u^{\rm in}_s(x)\right] &=\frac12\int_{\ball{\rho}{y}} \Big|\nabla u(y)\,D\Psi_s \big(\Phi_s(y)\big)\Big|^2 \Big(\det D\Psi_s\big(\Phi_s(y)\big)\Big)^{-1} \,\dd y\\
&= \frac12\int_{\ball{\rho}{y}} \Big|\nabla u(y)\,\Big(I+sD\varphi(y) + \bigo(s^2)\Big)\Big|^2 \cdot\Big(1 -s\,{\rm div}\big(\varphi(y)\big) + \bigo(s^2)\Big)\,\dd y.
\end{align*}
Taking $d/ds$ and evaluating at $s=0$, we recover Equation~\eqref{stationary hm} via integration by parts.  

\subsection{Different notions of harmonic maps}

We introduce the following:
\begin{definition}
Let $u \in W^{1,2}(\Omega \subset \R^n; \N \emb \R^p)$.
\begin{enumerate}
\item 
$u$ is a minimising harmonic map on $\Omega$ if it minimises the Dirichlet energy among all the maps $v \in W^{1,2}(\Omega;\N)$ such that $u=v$ on $\p\Omega$ in the sense of trace;

    \item 
$u$ is a weakly harmonic map if it is a weak solution to the Euler--Lagrange equation~\eqref{weak harmonic map eq, 2} for $u$ with respect to outer variations;
\item 
$u$ is a stationary harmonic map if it satisfies~\eqref{weak harmonic map eq, 2} and \eqref{stationary hm, coord}; \emph{i.e.}, if it is a critical point of the Dirichlet energy with respect to both outer and inner variations.
\end{enumerate}
\end{definition}
In fact, we have
\begin{tcolorbox}[
  boxrule=0.5pt,
  colback=white,
  colframe=blue,
  left=1em,
  right=1em,
]
\begin{center} 
$\left\{ \text{weakly harmonic maps} \right\} \subsetneq \left\{ \text{stationary harmonic maps} \right\}  \subsetneq \left\{ \text{minimising harmonic maps} \right\}.$
\end{center}
\end{tcolorbox}
\begin{itemize}
    \item 
    For the first $\subsetneq$, Rivi\`{e}re~\cite{riv} constructed a weakly harmonic map $u_0: \bthree \to \stwo$ whose singular set is the whole domain $\overline{\bthree}$; however, by Naber--Valtorta~\cite{nv}, the singular set of any stationary harmonic map $u: \bthree \to \stwo$ has locally finite 1-dimensional measure.\footnote{It remains an open problem if any stationary harmonic map $u: \bthree \to \stwo$ has finitely many singularities.}

    \item 
    For the second  $\subsetneq$ (see Lin~\cite[p.791]{lin} for the discussions below), Schoen--Uhlenbeck~\cite{su1} proved via $\e$-regularity that any sequence of minimising harmonic maps weakly convergent in $W^{1,2}(\Omega;\N)$ indeed converges strongly in  $W^{1,2}_\loc(\Omega;\N)$. By the Luckhaus lemma~\cite{luck} (see also \cite{hl, su1}), the limiting map is also a minimising harmonic map.

    \noindent
    But the same property fails for minimising harmonic maps: Hardt--Lin--Poon~\cite{hlp} constructed a smooth, stationary, axially symmetric harmonic map $u:\bthree\to\stwo$ with isolated singularities of degree zero. Indeed, $\singu \cap \ball{\delta_0}{0} = \{0\}$ for some $\delta_0>0$ and $\deg \left(u \mres \p\ball{\delta}{0} \right) = 0$ for all $\delta \in ]0,\delta_0]$. However, the rescaled mappings $u_\lambda: x \mapsto u(\lambda x)$ satisfy $u_\lambda \weak {\rm constant}$, while $|\na u_\lambda|^2\dd\leb^3 \mres \bthree \weak 16\pi \hau^1\mres \left[(\{0\}\times \R^1)\cap{\bthree}\right]$ as $\lambda \to 0^+$.

\end{itemize}

\section{Monotonicity formula and $\e$-regularity}\label{sec: mono}

\subsection{Monotonicity identity}
Starting from Equation~\eqref{stationary hm} (or equivalently \eqref{stationary hm, coord}) derived from inner variations, we pick $z\in \Omega$, $0<\rho<r<{\rm dist}(z,\p\Omega)$, and $\varphi \in C^\infty_0\left({\ball{r}{z}};\R^n\right)\cap C^\infty\left(\overline{\ball{\rho}{z}};\R^n\right)$ to obtain that
\begin{align}\label{xx}
&\sum_{i,j=1}^n \int_{\ball{\rho}{z}} \left(\delta_{ij}|\na u|^2-2\p_iu \cdot \p_j u\right)\p_i\varphi^j\,\dd x\nonumber\\
&\qquad = \sum_{i,j=1}^n \int_{\p\ball{\rho}{z}} \left(\delta_{ij}|\na u|^2-2\p_iu \cdot \p_j u\right)\varphi^j\nu^i\,\dd\hau^{n-1},
\end{align}
where $\nu(x) = \frac{x-z}{|x-z|}$ for $x \in \p\ball{\rho}{z}$. Taking $\varphi$ satisfying $\varphi(x)=x-z$ on $\ball{\rho}{z}$ in \eqref{xx}, we get
\begin{align}\label{useful identity}
    (n-2) \int_{\ball{\rho}{z}}|\na u|^2\,\dd x = \rho\int_{\p\ball{\rho}{z}} \left(|\na u|^2 - 2|\na u\cdot\nu|^2\right)\,\dd\hau^{n-1}.
\end{align}
Substituting  \eqref{useful identity} into \eqref{xx}, we obtain
\begin{equation*}
    \frac{d}{d\rho}\left\{\rho^{2-n}\int_{\ball{\rho}{z}}|\na u|^2\,\dd x\right\} = 2\int_{\p\ball{\rho}{z}} \frac{|\na u\cdot \nu|^2}{|x-z|^{n-2}}\,\dd\hau^{n-1}.
\end{equation*}
Hence, for a.e. $0<\sigma<\rho<{\rm dist}(z,\p\Omega)$, we have the monotonicity identity:
\begin{align}\label{monotonicity identity}
  \rho^{2-n}\int_{\ball{\rho}{z}}|\na u|^2\,\dd x - \sigma^{2-n}\int_{\ball{\sigma}{z}}|\na u|^2\,\dd x = 2\int_{\ball{\rho}{z} \setminus \ball{\sigma}{z}} \frac{|\na u(x)\cdot \nu(x)|^2}{|x-z|^{n-2}}\,\dd\hau^{n-1}.
\end{align}
In particular, 
\begin{equation*}
\rho\longmapsto  \rho^{2-n}\int_{\ball{\rho}{z}}|\na u|^2\,\dd x\quad \text{ is nondecreasing for stationary harmonic map $u$}.    
\end{equation*}

\subsection{Density}
By the previous discussions, the following limit exists for every $z \in \Omega$, known as the \emph{density} of $u$ at $z$:
\begin{equation}\label{density, def}
    \Theta_u(z):= \lim_{\rho \to 0^+} \left\{  \rho^{2-n}\int_{\ball{\rho}{z}}|\na u|^2\,\dd x\right\}.
\end{equation}
One may easily show that $\Theta_u(z)$ is upper semicontinuous in $z$, namely
\begin{align*}
    \Theta_u(z) \geq \limsup_{i\to\infty} \Theta_u(z_i)\qquad\text{for } z_i \to z.
\end{align*}
In addition, by monotonicity we have that, for every $z$ such that $\ball{\delta_0}{z} \subset \Omega$,
\begin{align*}
    \Theta_u(z) \leq \delta_0^{2-n} \cdot \E_{\ball{\delta_0}{z}}[u] \leq \delta_0^{2-n}\E_\Omega[u].
\end{align*}

\subsection{Epsilon regularity}
We now prove the (interior) $\e$-regularity theorem, which roughly states that if a harmonic map has small $L^2$-norm and bounded $\dot{W}^{1,2}$-norm, then it is smooth in the interior. The norms are taken in suitable normalised or non-dimensionalised sense.

Throughout, denote $$\dashint_{\Omega'}f\,\dd x := \frac{1}{\leb^n(\Omega')} \int_{\Omega'}f\,\dd x,\qquad (f)_{x_0,r}:=\dashint_{\ball{r}{x_0}}f\,\dd x. $$

\begin{theorem}\label{thm: epsilon reg}
Let $\Lambda'>0$ and $\theta \in ]0,1[$. There exists $\e = \e(n,N,\Lambda',\theta)$ such that the following holds. Let $u \in W^{1,2}(\Omega \subset \R^n;\N)$ be a minimising harmonic map on $\ball{R}{x_0}\subset \Omega$ such that
\begin{align}\label{conditions in e-reg thm}
    R^{2-n}\int_{\ball{R}{x_0}}|\na u|^2\,\dd x \leq \Lambda'\quad\text{and}\quad R^{-n}\int_{\ball{R}{x_0}}\left| u-(u)_{x_0,R} \right|^2\,\dd x < \e^2.
\end{align}
Then $u \in C^\infty(\ball{\theta R}{x_0})$. In addition, for each $j \in \mathbb{N}$ we have the estimate:
\begin{align*}
    R^j\sup_{\ball{\theta R}{x_0}}\left|\na^j u\right| \leq C(n,\N,\Lambda',\theta,j)\,\left\{ R^{-n}\int_{\ball{R}{x_0}}\left| u-(u)_{x_0,R} \right|^2\,\dd x\right\}^{1/2}.
\end{align*}
\end{theorem}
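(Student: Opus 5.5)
The plan follows Schoen--Uhlenbeck as presented in Simon's notes. After translation and scaling $x\mapsto x_0+Rx$, which leaves both quantities in \eqref{conditions in e-reg thm} unchanged, we take $x_0=0$ and $R=1$. The core is an \emph{energy decay} (Campanato-type) lemma. There are $\vartheta\in]0,1/4[$ and $\e_0>0$, depending on $n,\N,\Lambda'$, with the following property. If $u$ is minimising on $\ball{\rho}{y}$, $\rho^{2-n}\int_{\ball{\rho}{y}}|\na u|^2\leq \Lambda'$, and $\rho^{-n}\int_{\ball{\rho}{y}}|u-(u)_{y,\rho}|^2<\e_0^2$, then
\[
(\vartheta\rho)^{-n}\int_{\ball{\vartheta\rho}{y}}|u-(u)_{y,\vartheta\rho}|^2\,\dd x\leq \tfrac12\,\rho^{-n}\int_{\ball{\rho}{y}}|u-(u)_{y,\rho}|^2\,\dd x .
\]
Given this lemma, apply it at every $y\in\ball{\theta}{0}$ with $\rho=(1-\theta)$. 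The hypotheses at scale $\rho$ follow from those at scale $1$, up to constants depending on $\theta$, and the energy bound persists at all smaller scales by the monotonicity identity \eqref{monotonicity identity}. Iterating gives $r^{-n}\int_{\ball{r}{y}}|u-(u)_{y,r}|^2\leq C r^{2\alpha}\e^2$ for all small $r$. Campanato's characterisation then gives $u\in C^{0,\alpha}(\ball{\theta}{0})$ with norm controlled by the $L^2$ oscillation. Finally, Hölder continuity of a weakly harmonic map lets us localise into a coordinate chart of $\N$, where \eqref{weak harmonic map eq, 2} is an elliptic system with quadratic gradient growth. Standard theory (a Caccioppoli inequality plus smallness of oscillation, giving $W^{2,p}$ and then Schauder bootstrapping) yields $C^\infty$ and the stated scale-invariant estimates $\sup|\na^j u|\leq C\{\int|u-(u)|^2\}^{1/2}$. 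Linearity in the oscillation comes from the fact that all the estimates are homogeneous once smallness is known.

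To prove the decay lemma I would argue by contradiction and compactness. Suppose there are minimisers $u_k$ on $\bone$ with energy $\leq\Lambda'$ and $\e_k^2:=\int_{\bone}|u_k-(u_k)_{0,1}|^2\to0$, but for which the decay fails. Set $v_k=(u_k-(u_k)_{0,1})/\e_k$.

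The key step is a \emph{Caccioppoli-type inequality} for minimisers: $\int_{\ball{1/2}{0}}|\na u_k|^2\leq C\int_{\bone}|u_k-\lambda|^2$ for constants $\lambda$. This gives a uniform $W^{1,2}(\ball{1/2}{0})$ bound on $v_k$, so $v_k\weak v$ weakly in $W^{1,2}$ and strongly in $L^2$. One then shows $v$ is harmonic, and the strong convergence yields the decay contradiction, because harmonic functions satisfy $\vartheta^{-n}\int_{\ball{\vartheta}{0}}|v-(v)|^2\leq C\vartheta^2\int|v|^2$; choosing $\vartheta$ small makes $C\vartheta^2<1/2$. Harmonicity of $v$ is not automatic, since the nonlinearity $A(\na u,\na u)/\e_k$ is not obviously small. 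I would obtain both the Caccioppoli inequality and harmonicity, including the strong $W^{1,2}$ convergence needed to pass to the limit, from comparison maps.

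For the comparison, take the linear interpolant between $u_k$ near $\p\ball{\sigma}{0}$ and a mollified or harmonic-extended function inside. The values stray from $\N$, so we project back by $\Pi$. This requires the interpolant to stay within $\tdn$, and here smallness of the oscillation is used. On a good sphere, chosen by Fubini, the $L^2$ bound controls the distance to $\N$ in the bulk. The Luckhaus lemma handles the thin annulus where $u_k$ is wild. It is exactly where the energy bound $\Lambda'$ enters, via the condition that an $n-1$-dimensional slice has small oscillation in the $W^{1,2}\to L^\infty$-free sense, with the error controlled by $\Lambda'^{1/2}\e_k^{\cdot}$.

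The main obstacle is this construction: building the Luckhaus-type competitor that stays in $\tdn$ with energy $\leq\int|\na h|^2+$ small error, where $h$ is the harmonic replacement. Minimality then gives $\int|\na(u_k-h)|^2\to0$ after scaling by $\e_k^2$, which simultaneously proves the Caccioppoli bound and the harmonicity of $v$.
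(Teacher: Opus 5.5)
Your overall architecture (an $L^2$-oscillation decay lemma proved by blowing up to a harmonic limit, then Campanato and a bootstrap) is sound. It is a standard way to prove the paper's Lemma~\ref{lem: PDE}. The gap is the step you yourself call key: the uniform Caccioppoli (reverse Poincar\'e) inequality. You propose to get it from a single Luckhaus-type competitor whose energy is $\int|\na h|^2$ plus an $o(\e_k^2)$ error. No such competitor is available under your hypotheses.

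The interpolation across the thin annulus of relative width $\delta$ costs roughly $\delta\int_{\p\ball{\sigma}{0}}|\na u_k|^2\,\dd\hau^{n-1}$, and you can only bound this by $C\delta\Lambda'$. Shrinking $\delta$ is blocked by the $C\delta^{-1}\e_k^2$ term and by the requirement $\delta^{-2n}\e_k^2\le\delta_0^2$ in Lemma~\ref{lem: luckhaus}. Projecting a harmonic replacement $h$ back to $\N$ costs about $\int{\rm dist}(h,\N)|\na h|^2$. You cannot make that $o(\e_k^2)$ without already knowing $\int|\na h|^2=O(\e_k^2)$, which is essentially the bound you are trying to prove. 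So the argument is circular. One comparison plus minimality gives only
\[
\sigma^{2-n}\int_{\ball{\sigma}{y}}|\na u|^2\,\dd x\le \delta\,\rho^{2-n}\int_{\ball{\rho}{y}}|\na u|^2\,\dd x+\frac{C}{\delta}\,\rho^{-n}\int_{\ball{\rho}{y}}|u-(u)_{y,\rho}|^2\,\dd x .
\]
Here the $\delta$-fraction of the full energy cannot be made $O(\e^2)$ at any single scale. Balancing the two terms only shows the normalised energy is small, which is \eqref{C1}, not that it is quadratic in the oscillation.

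The missing idea is how to absorb that term; this is Steps~3--5 of the paper.
\begin{enumerate}
\item Apply the inequality once at the top scale to get \eqref{C1}.
\item By the monotonicity identity \eqref{monotonicity identity}, this smallness holds on every sub-ball $\ball{r}{z}\subset\ball{\rho}{y}$. The Poincar\'e inequality turns it into small $L^2$-oscillation there, so Lemma~\ref{lem: luckhaus} applies on all sub-balls at once.
\item Feed the resulting family of inequalities \eqref{iterate} into a De Giorgi-type hole-filling iteration over all sub-balls. This absorbs the $\delta$-term and yields the clean reverse Poincar\'e inequality \eqref{to prove}.
\end{enumerate}

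Once you have this, your worry about harmonicity disappears. Since $|\Delta v_k|\le\beta\,\e_k|\na v_k|^2$, the $L^1(\ball{1/2}{0})$-norm of $\Delta v_k$ is $O(\e_k)$. So the weak limit $v$ is harmonic directly from \eqref{weak harmonic map eq, 2}, with no harmonic replacement or comparison map needed. Your decay iteration, Campanato step and bootstrap then go through as written.
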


The $\e$-regularity theorem holds also for  \emph{stationary} harmonic maps, under the additional assumption that $\Lambda'$ is also small. That is, the condition~\eqref{conditions in e-reg thm} is replaced with
\begin{align*}
    R^{2-n}\int_{\ball{R}{x_0}}|\na u|^2\,\dd x < \e^2\quad\text{and}\quad R^{-n}\int_{\ball{R}{x_0}}\left| u-(u)_{x_0,R} \right|^2\,\dd x < \e^2.
\end{align*}
It remains unknown whether $\e$-regularity can be proved for stationary harmonic maps under~\eqref{conditions in e-reg thm}. We refer the reader to Bethuel~\cite{b} and Evans~\cite{e}.

\begin{proof}[Sketch of proof of Theorem~\ref{thm: epsilon reg}] Let $u$ be a minimising harmonic map. 
We divide the arguments into five steps.

\smallskip
\noindent
{\bf Step~1. } It suffices to prove for a fixed $\theta$; say $\theta = \frac{1}{8}$. 

Indeed, suppose that the theorem holds for $\theta = 1/8$, and let us consider any other $\theta \in ]0,1[$. Pick $y_1, \ldots, y_Q \in \ball{\theta R}{x_0}$ with $Q=Q(n,\theta)$ such that $\ball{\theta R}{x_0} \subset \bigcup_{j=1}^Q \ball{\frac{(1-\theta)R}{8}}{y_j}$; note that the right-most term lies in $\ball{R}{x_0}$. Since the two quantities in the condition~\eqref{conditions in e-reg thm} are scale-invariant, we deduce from the assertion for $\theta = 1/8$ that
\begin{align*}
&[(1-\theta)R]^j \sup_{\ball{\frac{(1-\theta)R}{8}}{y_k}} \left|\na^j u\right| \\
&\qquad \leq C(n,\N,\Lambda',\theta,j)\,\left\{[(1-\theta)R]^{-n}\int_{\ball{(1-\theta)R}{y_k}}\left| u-(u)_{y_k,(1-\theta)R} \right|^2\,\dd x\right\}^{1/2}.
\end{align*}
Noting that $\lambda \mapsto \int_{\ball{\rho}{z}}|u-\lambda|^2\,\dd x$ attains its minimum at $\lambda = (u)_{z,\rho}$, we take supremum over $k \in \{1,\ldots,Q\}$ on both sides of the above inequality to conclude.

Hence, from now on, let us fix $\theta = \frac{1}{8}$.

\smallskip
\noindent
{\bf Step~2.} We \emph{claim} that the epsilon regularity theorem follows from a PDE argument, which can be schematically summarised as follows:

\begin{tcolorbox}[
  boxrule=0.5pt,
  colback=white,
  colframe=blue,
  left=1em,
  right=1em,
]
\begin{center}
Reverse Poincar\'{e} inequality + Small averaged $L^2$-norm $\Longrightarrow$ Regularity.
\end{center}
\end{tcolorbox}

\begin{lemma}\label{lem: PDE}
Given $\alpha \in ]0,1[$ and $\beta \geq 1$. Then there exists $\e_1 = \e_1(n,\alpha,\beta)>0$ such that the following holds. Let $u=(u^1,\ldots,u^p)^\top \in W^{1,2}\left(\ball{R}{x_0}\subset\R^n;\R^p\right)$ be a weak solution to $$\Delta u = F,$$
where $F \in L^1(\ball{R}{x_0})$ and $|F| \leq \beta |\na u|^2$ a.e. on $\ball{R}{x_0}$. Assume that
\begin{enumerate}
    \item 
    For any $\ball{\rho}{z} \subset \ball{R}{x_0}$, it holds that
    \begin{align}\label{condition a}
        \left(\frac{\rho}{2}\right)^{2-n}\int_{\ball{\frac{\rho}{2}}{y}}|\na u|^2\,\dd x \leq \beta \rho^{-n} \int_{\ball{\rho}{y}}\left|u-(u)_{\rho, y}\right|^2\,\dd x;
    \end{align}

    \item 
    \begin{align}\label{condition b}
    R^{-n} \int_{\ball{R}{x_0}} \left|u-(u)_{x_0,R}\right|^2\,\dd x \leq \e_1^2.
    \end{align}
\end{enumerate}
Then $u \in C^{1,\alpha}\left(\overline{\ball{\frac{R}{4}}{x_0}}\right)$ with 
\begin{align*}
    \left\|u\right\|_{C^{1,\alpha}\left(\overline{\ball{\frac{R}{4}}{x_0}}\right)} \leq C(n,\alpha,\beta) \left\{R^{-n} \int_{\ball{R}{x_0}} \left|u-(u)_{x_0,R}\right|^2\,\dd x\right\}^{1/2}.
\end{align*}
\end{lemma}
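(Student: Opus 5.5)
Normalise first: by translation and scaling ($u_R(x)=u(x_0+Rx)$) I take $x_0=0$, $R=1$. Both hypotheses \eqref{condition a}, \eqref{condition b} and the bound $|F|\le\beta|\na u|^2$ are scale invariant. Set
\[
\Phi(y,\rho):=\rho^{-n}\int_{\ball{\rho}{y}}|u-(u)_{y,\rho}|^2\,\dd x .
\]
The plan is a Campanato-type decay iteration for $\Phi$. The key step is a one-step improvement: there are $\tau\in]0,1/4[$ and $\e_1>0$, depending only on $n,\alpha,\beta$, such that for every $\ball{\rho}{y}\subset\bone$ with $\Phi(y,\rho)\le\e_1^2$,
\[
\Phi(y,\tau\rho)\le \tau^{2\alpha'}\Phi(y,\rho)
\]
for some fixed $\alpha'\in]\alpha,1[$. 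Once this holds, it iterates: the smallness is preserved, so $\Phi(y,\tau^k\rho)\le \tau^{2k\alpha'}\Phi(y,\rho)$. Applying this at every $y\in\ball{1/2}{0}$ with $\rho=1/2$ gives, by Campanato's characterisation, $u\in C^{0,\alpha'}(\ball{1/2}{0})$ with seminorm bounded by $C\,\Phi(0,1)^{1/2}$. Here $\Phi(y,1/2)\le 2^n\cdot 4\,\Phi(0,1)$ after comparing averages, so we may shrink $\e_1$ to absorb the constant.

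To prove the one-step decay I would use harmonic replacement. On $\ball{\rho/2}{y}$ write $u=h+w$, where $h$ is harmonic with $h=u$ on the boundary sphere, so $\Delta w=F$ and $w\in W^{1,2}_0$. For $h$, the standard decay of harmonic functions gives $\Phi_h(y,\tau\rho)\le C\tau^2\Phi_h(y,\rho/2)$. For $w$, the obvious step is to test with $w$, which fails since $F\,w$ need not be integrable. I would instead split
\[
\|w\|_{L^2}^2\lesssim \rho^{2}\,\|\na w\|^2_{L^{n/(n-1),\infty}}\lesssim \rho^{2}\,\|F\|_{L^1}^2\rho^{2-n},
\]
using the fundamental-solution estimate in weak $L^{n/(n-1)}$ for the Dirichlet problem with $L^1$ data. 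Then $\|F\|_{L^1}\le\beta\int_{\ball{\rho/2}{y}}|\na u|^2\le \beta^2\rho^{n-2}\Phi(y,\rho)$ by \eqref{condition a}. This gives
\[
(\rho/2)^{-n}\int_{\ball{\rho/2}{y}}|w|^2\le C(n,\beta)\,\Phi(y,\rho)^2 .
\]
The important point is that the perturbation is quadratic in $\Phi$. Combining,
\[
\Phi(y,\tau\rho)\le C\tau^2\Phi(y,\rho)+C\tau^{-n}\Phi(y,\rho)^2 .
\]
Choose $\tau$ with $C\tau^{2}\le\frac12\tau^{2\alpha'}$, then $\e_1$ with $C\tau^{-n}\e_1^2\le\frac12\tau^{2\alpha'}$. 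I expect this $L^1$-data estimate for $w$ to be the main obstacle, because the right-hand side is only in $L^1$. If the weak-$L^p$ route is awkward, the fallback is a duality argument: pair $w$ against solutions of $\Delta\psi=g$ with $g\in L^\infty$ (or $L^q$, $q>n/2$), for which $\psi$ is bounded. Since only an $L^2$ (indeed $L^1$) bound on $w$ is needed, this suffices.

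Finally I upgrade to $C^{1,\alpha}$. Hölder continuity plus \eqref{condition a} give Morrey decay
\[
\int_{\ball{r}{y}}|\na u|^2\le C r^{n-2+2\alpha'}\Phi(0,1).
\]
Hence $F\in L^{1,n-2+2\alpha'}$ in the Morrey sense, and by Adams/Riesz-potential estimates $\na u\in L^{q}$ Morrey spaces with improved exponent. Bootstrapping, $|F|\le\beta|\na u|^2$ eventually lies in a Morrey space $L^{1,n-1+\gamma}$ with $\gamma>0$, and then in $L^{q}$ for some $q>n$. Schauder/Calderón–Zygmund theory for $\Delta u=F$ then gives $\na u\in C^{0,\alpha}$ on $\ball{1/4}{0}$, with norm linear in $\Phi(0,1)^{1/2}$ (quadratic terms being absorbed by smallness). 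I would take $\alpha'$ close to $1$ so that finitely many bootstrap steps suffice.
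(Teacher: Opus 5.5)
The gap is in the one-step improvement. It rests on the bound $(\rho/2)^{-n}\int_{\ball{\rho/2}{y}}|w|^2\,\dd x\le C\big(\rho^{2-n}\|F\|_{L^1(\ball{\rho/2}{y})}\big)^2$ for the Dirichlet solution of $\Delta w=F$, and this bound is false once $n\ge 4$.

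The weak-type estimate $\|\na w\|_{L^{n/(n-1),\infty}}\lesssim\|F\|_{L^1}$ is correct. However, it only puts $w$ in weak $L^{n/(n-2)}$, and $n/(n-2)\le 2$ for $n\ge4$. So there is no inequality $\|w\|_{L^2}\lesssim\|\na w\|_{L^{n/(n-1),\infty}}$ on a ball. Concretely, take smooth $F_k\ge 0$ with $\|F_k\|_{L^1}=1$ concentrating at the centre. Then $w_k$ tends to the Green function $\sim|x|^{2-n}$, which is not in $L^2_{\loc}$ for $n\ge4$, so $\|w_k\|_{L^2}\to\infty$.

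Your duality fallback has the same limitation. The solution $\psi$ of $\Delta\psi=g$ is bounded only for $g\in L^q$ with $q>n/2$, so duality controls $w$ only in $L^{q'}$ with $q'<n/(n-2)$. That range contains $2$ only when $n\le3$.

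Nor does an $L^1$ bound on $w$ suffice for the scheme as you wrote it. $\Phi$ is an $L^2$ excess, and the harmonic part is estimated through $h=u-w$ in $L^2$. Hypothesis (a) bounds $\|F\|_{L^1}$ by the $L^2$ excess, not the $L^1$ excess. So with $\|w\|_{L^1}$ alone the iteration does not close. Your argument is therefore complete only for $n\le 3$, whereas the lemma is meant (and used) for arbitrary $n$.

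The paper quotes this lemma without proof, so here is what is missing. Run your harmonic-replacement iteration on the $L^1$ excess $\Psi(y,\rho):=\rho^{-n}\int_{\ball{\rho}{y}}|u-(u)_{y,\rho}|\,\dd x$.
\begin{itemize}
\item In every dimension, duality against bounded $\psi$ gives $\|w\|_{L^1(\ball{\rho/2}{y})}\le C\rho^2\|F\|_{L^1(\ball{\rho/2}{y})}$.
\item Harmonic $h$ satisfies $\Psi_h(y,\tau\rho)\le C\tau\,\Psi_h(y,\rho/2)$.
\item To close the loop you need the $L^1$-Caccioppoli (reverse H\"older) inequality $\rho^{2-n}\int_{\ball{\rho/2}{y}}|\na u|^2\,\dd x\le C(n,\beta)\,\Psi(y,\rho)^2$, which you must derive from (a).
\end{itemize}
For the last point, interpolate $L^2$ between $L^1$ and the Sobolev exponent, and apply Sobolev--Poincar\'e and (a) on sub-balls. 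This gives $r^{2-n}\int_{\ball{r/4}{z}}|\na u|^2\le\delta\, r^{2-n}\int_{\ball{r}{z}}|\na u|^2+C_\delta\, r^{-2n}\big(\int_{\ball{r}{z}}|u-c|\big)^2$ for all such balls and constants $c$. The first term is then absorbed by the standard iteration lemma for subadditive set functions, the same De Giorgi-type absorption as in Step~5 of the paper.

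With this, $\|F\|_{L^1(\ball{\rho/2}{y})}\le C\rho^{n-2}\Psi(y,\rho)^2$, and hence $\Psi(y,\tau\rho)\le C\tau\,\Psi(y,\rho)+C\tau^{-n}\Psi(y,\rho)^2$. This iterates for any $\alpha'<1$ exactly as you planned. Start from $\Psi(y,1/2)\le C\,\Phi(0,1)^{1/2}$, which follows from Cauchy--Schwarz, and use the $L^1$ version of Campanato's theorem to get $C^{0,\alpha'}$.

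The remainder of your proposal is sound. With $\alpha'>1/2$, the Morrey decay coming from the Caccioppoli inequality puts $F$ in $L^{1,n-1+(2\alpha'-1)}$. Then $\na u$ is H\"older continuous, and the bootstrap to $C^{1,\alpha}$ goes through.
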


We apply the lemma to the harmonic map equation~\eqref{weak harmonic map eq, 2}. Here, $F(u,\na u) = A_u(\na u, \na u)$, verifying the quadratic growth condition $|F|\leq \beta |\na u|^2$ with $\beta = \sup\left\{\left|A_y(\tau,\tau)\right|:\, y \in \N, \, \tau \in T_y^1\N\right\}$, which depends only on the $C^2$-(extrinsic) geometry of $\N$. Then Lemma~\ref{lem: PDE} implies that $u \in C^{1,\alpha}\left(\overline{\ball{\frac{R}{2}}{x_0}}\right)$. But then $\Delta (\p_iu)=\p_iF$ with $|\p_i F| \lesssim |\p A \star \na u \star \na u| + |A \star \na u^{\star 3}| + |A\star \na^2 u \star \na u| \lesssim |\na ^2 u|^2$, where $\star$ denotes generic multilinear combinations. The constants depend on the  $C^{1,\alpha}\left(\overline{\ball{\frac{R}{2}}{x_0}}\right)$-norm of $u$ and the $C^3$-geometry of $\N$. Then $u \in C^{2,\alpha}\left(\overline{\ball{\frac{R}{2}-\frac{R}{8}}{x_0}}\right)$. By bootstrap, we obtain $u \in C^\infty\left(\ball{\frac{R}{4}}{x_0}\right)$ with the estimate
\begin{align*}
    R^j \sup_{\ball{\frac 18 + \frac 1j}{x_0}}\left|\na^j u\right| \leq C(j,n,\N,\alpha)\left\{R^{-n} \int_{\ball{R}{x_0}} \left|u-(u)_{x_0,R}\right|^2\,\dd x\right\}^{1/2}.
\end{align*}
This proves Theorem~\ref{thm: epsilon reg}.

\smallskip
\noindent
{\bf Step~3.} It remains to verify the conditions~\eqref{condition a} and \eqref{condition b} in Lemma~\ref{lem: PDE}. The latter is part of the hypotheses for the $\e$-regularity theorem, so we only need to prove~\eqref{condition a}. Moreover, by the monotonicity of stationary harmonic maps~\eqref{monotonicity identity} and a covering argument as in Step~1, it suffices to prove that for any $\rho_0 \in ]0,R/2[$, $y_0 \in \ball{R/2}{x_0}$, $\rho \leq \frac{\rho_0}{4}$, and $y \in \ball{\frac{\rho_0}{2}}{y_0}$,  
\begin{equation}\label{to prove}
    \rho^{2-n}\int_{\ball{\frac{\rho}{2}}{y}}|\na u|^2\,\dd x \leq C_0(n,\N,\Lambda') \rho^{-n} \int_{\ball{\rho}{y}}\left|u-(u)_{y,\rho}\right|^2\,\dd x, 
\end{equation}
\emph{provided that} 
\begin{align}\label{eps-0 condition}
    \rho_0^{-n} \int_{\ball{\rho_0}{y_0}}\left|u-(u)_{y_0,\rho_0}\right|^2\,\dd x \leq \e_0^2
\end{align}
for suitably small $\e_0=\e_0(n,\N,\Lambda')$.

\smallskip
\noindent
{\bf Step~4.} For \underline{minimising} harmonic maps, this holds by an extension lemma proved in Scheon--Uhlenbeck~\cite{su1}. A nice alternative proof was later given by S. Luckhaus~\cite{luck}. 
\begin{lemma}\label{lem: luckhaus}
    Let $\N \emb \R^p$ be a compact submanifold, $\ball{\rho}{y}$ be an arbitrary ball in the domain of $u$, and  $\Lambda'>0$. Then there exist $\delta_0 = \delta_0(n,\N,\Lambda')>0$ and $C=C(n,\N,\Lambda')>0$ such that the following holds:

    Given $\delta \in ]0,1[$, $u \in W^{1,2}(\ball{\rho}{y};\N)$ satisfying
    \begin{align*}
        \rho^{2-n}\int_{\ball{\rho}{y}}|\na u|^2\,\dd x\leq \Lambda'\quad\text{and}\quad \delta^{-2n}\rho^{-n}\int_{\ball{\rho}{y}}\left|u-(u)_{y,\rho}\right|^2\,\dd x \leq \delta_0^2.
    \end{align*}
There exists a good radius $\sigma \in ]3\rho/4,\rho[$ and a comparison map $w=w_\delta \in W^{1,2}\left(\ball{\rho}{y};\N\right)$ such that $w \equiv u$ near $\p\ball{\sigma}{y}$ and that 
    \begin{align*}
        \sigma^{2-n}\int_{\ball{\sigma}{y}} |\na w|^2\,\dd x \leq \delta\rho^{2-n}\int_{\ball{\rho}{y}}|\na u|^2\,\dd x + \frac{C}{\delta}\rho^{-n}\int_{\ball{\rho}{y}}\left|u-(u)_{y,\rho}\right|^2\,\dd x.
    \end{align*}
\end{lemma}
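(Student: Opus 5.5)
By scaling and translation we may take $y=0$ and $\rho=1$, writing $\lambda:=(u)_{0,1}$, $E:=\int_{\bone}|\na u|^2\le\Lambda'$ and $\mu:=\int_{\bone}|u-\lambda|^2\le\delta^{2n}\delta_0^2$. The plan is Luckhaus' construction. First I choose a good radius, then build the comparison map $w$ in two pieces. Inside I use a homogeneous-type extension of $u-\lambda$ that is shrunk towards $\lambda$. In a thin annulus I interpolate between this and $u$. Finally I project back onto $\N$ with $\Pi$.

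\textbf{Step 1: good radius.} By Fubini, $\int_{3/4}^1\int_{\p\ball{s}{0}}\big(|\na u|^2+\delta^{-2}|u-\lambda|^2\big)\,\dd\hau^{n-1}\dd s\le E+\delta^{-2}\mu$. So for a set of $s\in]3/4,1[$ of measure at least $1/8$ we have
\[
\int_{\p\ball{s}{0}}|\na_T u|^2\le 8E \quad\text{and}\quad \int_{\p\ball{s}{0}}|u-\lambda|^2\le 8\mu .
\]
We may also require that $u\mres\p\ball{s}{0}$ lies in $W^{1,2}$ of the sphere. Fix such an $s$ and set $\sigma:=s$.

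\textbf{Step 2: the key oscillation estimate on the sphere.} This is the main obstacle. We need to know that $u$ restricted to a thin shell $\ball{s}{0}\setminus\ball{(1-h)s}{0}$ stays within $\delta(\N)$ of $\N$ after interpolation. Working on the sphere $\p\ball{s}{0}$, I would use Luckhaus' cubical decomposition of $\p\ball{s}{0}$ into cells of size $h$ (with $h$ to be chosen comparable to $\delta$). On each cell, a Morrey/Poincaré-type estimate on lower-dimensional skeletons gives
\[
\sup_{\text{cell}}|u-v|^2\lesssim h^{2-(n-1)}\int_{\text{cell}}|\na_T u|^2+h^{1-n}\int_{\text{cell}}|u-\lambda|^2 .
\]
Here $v$ is the linear interpolation between $u(x)$ and $u(x(1-h))$, or more simply between $u$ and the cellwise averages. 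Summed appropriately, the distance $\sup{\rm dist}(\text{interpolant},\N)$ is bounded by $C(h^{3-n}E+h^{1-n}\mu)^{1/2}$. With $h=\delta$ this is small. The $h^{3-n}E$ term is the delicate one. Rather than the crude sup, I would follow Luckhaus and estimate the distance to $\N$ via the oscillation of $u$ along the $1$-skeleton, where the 1D Sobolev embedding gives $\mathrm{osc}^2\le h\int|\na_T u|^2$ over a generic grid. After averaging over translations of the grid, this is controlled by $C h^{2-n}\cdot h\,E$ for $n$ small. In general dimension I would instead rely on $\mu\le\delta^{2n}\delta_0^2$ to force most cells to have small oscillation. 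On the bad cells I would use a radial retraction argument (homogeneous extension avoiding a point of $\N$'s complement) with energy still bounded by $C h^{2}\cdot$cell energy.

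\textbf{Step 3: define $w$ and estimate energy.} On $\ball{(1-\delta)s}{0}$ set $\tilde w(x):=\lambda+\big(u(sx/|x|)-\lambda\big)\cdot\eta(|x|)$, with a cutoff making it homogeneous of degree $0$ in an inner region and constant $\lambda$ near the origin. Its energy is bounded by $C\delta\cdot\int_{\p\ball{s}{0}}|\na_T u|^2\lesssim\delta E$ plus $C\int_{\p\ball{s}{0}}|u-\lambda|^2/\delta\lesssim\mu/\delta$. Correcting the constant factor, the $\delta E$ term is where one uses that $\tilde w$ lives essentially on a shell of width $\delta$. In the annulus $(1-\delta)s<|x|<s$, let $\tilde w$ interpolate linearly in $|x|$ between the cellwise construction and $u(sx/|x|)$. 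Its energy is $\lesssim\delta\int|\na_T u|^2+\delta^{-1}\int|u-\lambda|^2$. Step~2 guarantees $\tilde w\in\tdn$, so $w:=\Pi\circ\tilde w$ is defined, and $|\na w|\le C(\N)|\na\tilde w|$. Extend $w$ by $u$ outside $\ball{s}{0}$; then $w\equiv u$ near $\p\ball{\sigma}{0}$. Summing the two energy bounds and rescaling back yields
\[
\sigma^{2-n}\int_{\ball{\sigma}{y}}|\na w|^2\le C\delta\rho^{2-n}\int_{\ball{\rho}{y}}|\na u|^2+\frac{C}{\delta}\rho^{-n}\int_{\ball{\rho}{y}}|u-(u)_{y,\rho}|^2 .
\]
Replacing $\delta$ by $\delta/C$, and adjusting $\delta_0$ accordingly, gives the stated form.
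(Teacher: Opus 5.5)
The paper does not prove this lemma; it refers to Schoen--Uhlenbeck and Luckhaus. Your outline has the right overall shape of Luckhaus' argument: a Fubini-chosen radius, a transition from $u\mres\p\ball{s}{0}$ to the mean $\lambda$ in a shell of width $\sim\delta$, and projection by $\Pi$.

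\textbf{The gap.} The hard step is showing that the transition map takes values in $\tdn$, so that $\Pi\circ\tilde w$ is defined with $|\na(\Pi\circ\tilde w)|\le C(\N)|\na\tilde w|$. Your Step~2 does not establish this.
\begin{itemize}
\item A sup bound like the one you display can only hold on edges, since on cells of dimension $\ge2$ there is no embedding $W^{1,2}\emb L^\infty$.
\item On an edge $I$ you have in effect used $\sup_I|f|^2\le h^{-1}\int_I|f|^2+h\int_I|f'|^2$. After summing over the skeleton this produces the term $h^{3-n}E$. With $h=\delta$ this is not small for $n=3$ and blows up for $n\ge4$, so your own estimate does not close.
\item The proposed repair is not an argument. $L^2$-smallness of $u-\lambda$ says nothing about the oscillation of $u$ on ``most cells''. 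The ``radial retraction'' on bad cells is unspecified and unavailable for general compact $\N$; for instance, $\R^3$ minus a point does not retract onto an embedded torus.
\end{itemize}
Step~3 has the same defect in another form. A degree-zero homogeneous extension of $u\mres\p\ball{s}{0}$ over a region of radial extent comparable to $s$ costs an amount comparable to $s\int_{\p\ball{s}{0}}|\na_T u|^2$, i.e.\ order $E$, not $\delta E$. The radial convex combination $\lambda+\eta(|x|)\big(u(sx/|x|)-\lambda\big)$ in a thin shell has the right energy. But it takes values far from $\N$ wherever $|u-\lambda|$ is large, which $L^2$-smallness does not exclude.

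\textbf{The missing idea.} Luckhaus' skeleton construction keeps the product in the edge estimate and controls only $|u-\lambda|$, and only on the $1$-skeleton.
\begin{itemize}
\item By averaging over translations, choose an $h$-grid on $\p\ball{s}{0}$ whose $j$-skeleta $R^{(j)}$ satisfy $\int_{R^{(j)}}|\na_T u|^2\,\dd\hau^j\le Ch^{j+1-n}E_s$ and $\int_{R^{(j)}}|u-\lambda|^2\,\dd\hau^j\le Ch^{j+1-n}\mu_s$. Here $E_s,\mu_s$ are the sphere integrals from your Step~1.
\item Identify the annulus $\ball{s}{0}\setminus\ball{(1-h)s}{0}$ with $\p\ball{s}{0}\times[0,h]$. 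Put $\tilde w=(1-t/h)u+(t/h)\lambda$ on $R^{(1)}\times[0,h]$, $\tilde w=u$ at $t=0$, and $\tilde w\equiv\lambda$ at $t=h$ and on $\ball{(1-h)s}{0}$.
\item Fill the remaining cells $F\times[0,h]$, which have $\dim F\ge2$ and hence dimension $\ge3$, successively by degree-zero homogeneous extension from their centres.
\end{itemize}
Homogeneous extension creates no new values, so ${\rm dist}(\tilde w,\N)\le\sup_{R^{(1)}}|u-\lambda|$; note ${\rm dist}(\lambda,\N)\le|u(x)-\lambda|$ for every $x$. Applying $\sup_I|f|^2\le h^{-1}\int_I|f|^2+2\|f\|_{L^2(I)}\|f'\|_{L^2(I)}$ with $f=u-\lambda$ on each edge $I$ gives
\[
\sup{\rm dist}^2(\tilde w,\N)\le Ch^{1-n}\mu_s+Ch^{2-n}(\mu_sE_s)^{1/2}\le C\delta_0^2\delta^{n+1}+C\delta_0(\Lambda')^{1/2}\delta^{2}\qquad(h=\delta).
\]
This is small once $\delta_0=\delta_0(n,\N,\Lambda')$ is small. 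The mixed term $(\mu_sE_s)^{1/2}$ is exactly what the factor $\delta^{-2n}$ in the hypothesis is designed to absorb.

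For the energy, the $2$-cells cost $Ch\int_{R^{(1)}}|\na_Tu|^2+Ch^{-1}\int_{R^{(1)}}|u-\lambda|^2$. A homogeneous extension into a cell of dimension $m\ge3$ and size $h$ costs at most $Ch$ times the energy on its boundary. Inductively, $\int_{\ball{s}{0}}|\na\tilde w|^2\le C\delta E_s+C\delta^{-1}\mu_s$. With this in place, $w:=\Pi\circ\tilde w$ and your final rescaling and renaming of $\delta$ go through.
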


Let $\delta>0$ be arbitrarily small. Take any  $\rho \leq \frac{\rho_0}{4}$ and $y \in \ball{\frac{\rho_0}{2}}{y_0}$. We first observe that
\begin{equation}\label{C1}
    \rho^{-n}\int_{\ball{\rho}{y}}\left|u-(u)_{y,\rho}\right|^2\,\dd x  \quad\text{ and }\quad \rho^{2-n}\int _{\ball{\rho}{y}}|\na u|^2\,\dd x\leq C_1(n,\N,\Lambda')\delta.
\end{equation}
Indeed, by the monotonicity formula~\eqref{monotonicity identity}, the minimality of $u$, the Luckhaus' Lemma~\ref{lem: luckhaus} (the $\sigma$ below is as in Lemma~\ref{lem: luckhaus}), and the assumptions in Theorem~\ref{thm: epsilon reg}, we obtain that
\begin{align*}
 &\rho_0^{2-n}\int_{\ball{\frac{3\rho_0}{4}}{y_0}} |\na u|^2\,\dd x   \leq C_2(n)\sigma^{2-n}\int_{\ball{\sigma}{z}}|\na u|^2 \,\dd x\\ &\qquad\leq     C_2(n)\sigma^{2-n}\int_{\ball{\sigma}{z}}|\na w|^2\,\dd x\\
    &\qquad\leq C_2(n)\left\{ \delta\rho_0^{2-n}\int_{\ball{\rho_0}{y}}|\na u|^2\,\dd x + \frac{C_3(n,\N,\Lambda')}{\delta}\rho_0^{-n}\int_{\ball{\rho_0}{y}}\left|u-(u)_{y,\rho_0}\right|^2\,\dd x\right\}\\
    &\qquad\leq C_2(n)\delta\Lambda' + C_2(n)C_3(n,\N,\Lambda')\delta^{2n-1} \delta_0^{2n} \\
    &\qquad \leq C_4(n,\N,\Lambda')\delta.
\end{align*} 
Thus, \eqref{C1} follows from the monotonicity formula~\eqref{monotonicity identity} and the Poincar\'{e} inequality.

Then, for any $r>0$ and $z \in \Omega$ such that $\ball{2r}{z} \subset \ball{\rho}{y}$, we claim that
\begin{align}\label{C5}
    r^2\int_{\ball{r/2}{z}}|\na u|^2\,\dd x &\leq C_5(n,\N,\Lambda') \left\{ \delta r^2\int_{\ball{r}{z}}|\na u|^2\,\dd x + \frac{1}{\delta} \int_{\ball{\rho}{y}} \left| u- (u)_{y,\rho}\right|^2\,\dd x \right\}.
\end{align}
The key point is that $C_5$ does not depend on $\delta$.  

To this end, set $\e_0=\e_0(\delta,n,\N,\Lambda')$ to be determined such that the condition~\eqref{eps-0 condition} holds. In particular, we may choose 
\begin{equation}\label{choice of eps}
\e_0 = \e_0(\delta, n, \N,\Lambda') :=\delta_0 \cdot \delta^n    
\end{equation}
for $\delta_0$ as in Luckhaus' Lemma~\ref{lem: luckhaus}; here $\delta>0$ is arbitrary. By a similar argument leading to \eqref{C1} above, we deduce that
\begin{align*}
    &  r^{2-n}\int_{\ball{r/2}{z}}|\na u|^2\,\dd x\\ &\qquad\leq C_6(n)\left\{ \delta r^{2-n}\int_{\ball{r}{z}}|\na u|^2\,\dd x + \frac{C_7(n,\N,\Lambda)}{\delta}r^{-n}\int_{\ball{r}{z}}\left|u-(u)_{y,r}\right|^2\,\dd x\right\}.
\end{align*}
In addition, $$\int_{\ball{r}{z}}\left|u-(u)_{y,r}\right|^2\,\dd x \leq \int_{\ball{r}{z}}\left|u-(u)_{y,\rho}\right|^2\,\dd x \leq \int_{\ball{\rho}{y}}\left|u-(u)_{y,\rho}\right|^2\,\dd x $$ by minimising $\lambda \mapsto \|u-\lambda\|_{L^2(\ball{r}{z})}$. Hence \eqref{C5} follows.

\smallskip
\noindent
{\bf Step~5.} Now we are ready to conclude. By selecting $\delta = \delta(n,\N,\Lambda')$ sufficiently small, we have
\begin{equation}\label{iterate}
 r^{2-n} \Phi(\ball{r/2}{z}) \leq \delta'r^{2-n}\Phi(\ball{r}{z}) + I
\end{equation}
for all $z,r$ such that $\ball{2r}{z}\subset \ball{\rho}{y}$, where $\Phi(\Omega'):=\E_{\Omega'}[u]$ and $$I=C_8(n,\N,\Lambda',\delta'
) \int_{\ball{\rho}{y}}\left|u-(u)_{y,\rho}\right|^2\,\dd x $$ for some $C_8$. Here $\delta'>0$ can be arbitrarily small. A standard iteration in elliptic theory due to De Giorgi yields, by iterating \eqref{iterate} over dyadic annuli, that there exists $\delta'_0 = \delta_0'(n)$ satisfying
\begin{equation*}
    \rho^{2-n}\Phi(\ball{\rho/2}{y}) \leq C_{9}(n)I\qquad\text{whenever } \delta' \in ]0,\delta_0'].
\end{equation*}
 This is precisely \eqref{to prove} that we want to prove.    \end{proof}

\subsection{Regular and singular sets} 

\begin{definition}
Given $u \in W^{1,2}(\Omega;\N)$, where $\Omega \subset \R^n$ is an open set and $\N$ is a Riemannian manifold. Define the regular and singular sets of $u$ respectively by
\begin{equation*}
    \begin{cases}
        {\rm reg}(u) := \Big\{x \in \Omega: \text{there exists a neighbourhood of $x$ on which $u$ is $C^\infty$}\Big\};
        \\
        \singu := \Omega \setminus {\rm reg}(u).
    \end{cases}
\end{equation*}
\end{definition}

We can recast the $\e$-regularity Theorem~\ref{thm: epsilon reg} as follows. Also recall the density from \eqref{density, def}.
\begin{corollary}\label{cor: eps-reg}
Let $u \in W^{1,2}(\Omega\subset\R^n;\N)$ be a stationary harmonic map. There exists $\e=\e(n,\N)$ such that if $\rho^{2-n}\int_{\ball{\rho}{y}} |\na u|^2\,\dd x<\e$ for some $\ball{\rho}{y}\subset \Omega$, then $y \in {\rm reg}(u)$. Moreover, $y \in {\rm reg}(u)$ if and only if $\Theta_u(y)=0$. 
\end{corollary}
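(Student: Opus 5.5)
The plan is to reduce Corollary~\ref{cor: eps-reg} to the stationary version of the $\e$-regularity Theorem~\ref{thm: epsilon reg}, i.e.\ the version where both the scaled energy and the scaled $L^2$-oscillation are below $\e^2$. The scaled energy is assumed small, so the only thing to supply is smallness of the $L^2$-oscillation. For that I would use the Poincar\'e inequality on $\ball{\rho}{y}$:
\begin{align*}
\rho^{-n}\int_{\ball{\rho}{y}}\left|u-(u)_{y,\rho}\right|^2\,\dd x \leq C_P(n)\,\rho^{2-n}\int_{\ball{\rho}{y}}|\na u|^2\,\dd x .
\end{align*}
The right-hand side is at most $C_P(n)\e$.

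Let $\e_*=\e_*(n,\N)$ be the threshold from the stationary $\e$-regularity theorem. Choose $\e\le \min\{\e_*^2,\ \e_*^2/C_P(n)\}$. Then both hypotheses of the stationary theorem hold on $\ball{\rho}{y}$ with $\Lambda'$ as small as required. Taking $\theta=1/2$, say, gives $u\in C^\infty(\ball{\rho/2}{y})$. This is an open neighbourhood of $y$, so $y\in{\rm reg}(u)$. In the minimising case one can instead apply Theorem~\ref{thm: epsilon reg} directly with $\Lambda'=1$. The scale invariance used in Step~1 of the earlier proof means no rescaling issues arise.

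For the equivalence, first suppose $\Theta_u(y)=0$. By the definition \eqref{density, def}, which makes sense because of the monotonicity identity \eqref{monotonicity identity}, there is some small $\rho$ with $\ball{\rho}{y}\subset\Omega$ and $\rho^{2-n}\int_{\ball{\rho}{y}}|\na u|^2<\e$. The first part then gives $y\in{\rm reg}(u)$. Conversely, suppose $u$ is smooth near $y$. Then $|\na u|\le M$ on some ball $\ball{r_0}{y}$, so
\begin{align*}
\rho^{2-n}\int_{\ball{\rho}{y}}|\na u|^2\,\dd x \leq \omega_n M^2\rho^2 \longrightarrow 0 \qquad \text{as } \rho\to0^+,
\end{align*}
hence $\Theta_u(y)=0$.

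The only delicate point is that Theorem~\ref{thm: epsilon reg} was proved only for minimising maps. For stationary maps we rely on the small-energy version quoted after it (Bethuel, Evans), so $\e$ must be tied to that threshold. The Poincar\'e step is what makes the $L^2$ hypothesis automatic once the energy is small.
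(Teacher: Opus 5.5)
Your proposal is correct and is the argument the paper intends: the paper gives no separate proof and presents the corollary simply as a recasting of Theorem~\ref{thm: epsilon reg}. You fill in the details it leaves implicit, namely using the small-energy stationary version (Bethuel, Evans), using the Poincar\'e inequality to make the $L^2$-oscillation hypothesis follow from the energy hypothesis, and deducing the density characterisation from the definition of $\Theta_u$ together with local boundedness of $\nabla u$ at regular points.
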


This corollary allows us to conclude that the singular set is at most $(n-2)$-dimensional. Indeed, we have the following stronger result:
\begin{theorem}\label{thm: n-2 measure of singular set is zero}
    Let $u\in W^{1,2}(\Omega\subset\R^n;\N)$ be a stationary harmonic map. Then $$\hau^{n-2}(\singu)=0.$$
\end{theorem}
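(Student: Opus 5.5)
The plan is a standard density-plus-covering argument, using Corollary~\ref{cor: eps-reg} together with the absolute continuity of the energy measure with respect to Lebesgue measure. By Corollary~\ref{cor: eps-reg}, every $y\in\singu$ satisfies
\begin{align*}
\rho^{2-n}\int_{\ball{\rho}{y}}|\na u|^2\,\dd x\geq \e \qquad\text{for every } \ball{\rho}{y}\subset\Omega,
\end{align*}
with $\e=\e(n,\N)$. So the singular set is contained in the set where the upper $(n-2)$-density of the Radon measure $\mu:=|\na u|^2\,\dd x$ is bounded below by $\e$, up to a dimensional constant. The goal is then to show that such a set carries no $\hau^{n-2}$-measure.

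Step 1 (Lebesgue measure). Since $n-2<n$, the density bound gives, for $y\in\singu$,
\begin{align*}
\rho^{-n}\int_{\ball{\rho}{y}}|\na u|^2\,\dd x\geq \e\rho^{-2}\to\infty .
\end{align*}
By the Lebesgue differentiation theorem applied to $|\na u|^2\in L^1_\loc$, this is impossible at almost every point, so $\leb^n(\singu)=0$. It suffices to argue on a compact $K\Subset\Omega$, and I also note that $\singu$ is relatively closed in $\Omega$ by definition.

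Step 2 (covering). Fix $K$ compact and $\delta>0$, and let $U\supset \singu\cap K$ be an open set with $U\Subset\Omega$ and $\int_U|\na u|^2\,\dd x<\eta$. Such a $U$ exists for any $\eta>0$ because $\leb^n(\singu\cap K)=0$ and the integral is absolutely continuous. For each $y\in\singu\cap K$, choose $\rho_y<\delta/10$ with $\ball{\rho_y}{y}\subset U$. Apply the Vitali $5r$-covering lemma to obtain disjoint balls $\ball{\rho_k}{y_k}$ whose fivefold enlargements cover $\singu\cap K$. Then
\begin{align*}
\hau^{n-2}_\delta(\singu\cap K)\leq C(n)\sum_k\rho_k^{n-2}\leq \frac{C(n)}{\e}\sum_k\int_{\ball{\rho_k}{y_k}}|\na u|^2\,\dd x\leq \frac{C(n)}{\e}\,\eta .
\end{align*}
Letting $\eta\to0$ and then $\delta\to0$ gives $\hau^{n-2}(\singu\cap K)=0$. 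Exhausting $\Omega$ by countably many compacts finishes the proof.

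The only real point needing care is Step 1. Its role is to make the energy on a neighbourhood of the singular set small, because the density lower bound alone only yields $\hau^{n-2}(\singu)\leq C\e^{-1}\int|\na u|^2<\infty$. If one prefers, this step can also be phrased via the standard fact that $\hau^{n-2}(\{\Theta^{*,n-2}(\mu,\cdot)>0\})=0$ for $\mu\ll\leb^n$ with $\mu$ locally finite. Everything else is routine, since the $\e$-regularity is already packaged in Corollary~\ref{cor: eps-reg}.
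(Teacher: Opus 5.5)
Your proof is correct and follows essentially the same route as the paper: first show $\leb^n(\singu)=0$, then cover $\singu\cap\K$ by disjoint balls. On those balls the $\e$-lower bound from Corollary~\ref{cor: eps-reg} turns $\sum\rho_k^{n-2}$ into the energy of a neighbourhood of $\singu\cap\K$, and that energy is small by absolute continuity. The only differences are cosmetic. The paper uses a maximal disjoint family of balls of fixed radius $\delta/2$, together with dominated convergence on the shrinking neighbourhoods $[\K\cap\singu]+\ball{\delta}{0}$, where compactness of $\K\cap\singu$ is implicit. You instead apply the Vitali $5r$-lemma inside an open set of small energy. Your Lebesgue-differentiation justification of $\leb^n(\singu)=0$ is also more explicit than the paper's one-line remark.
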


\begin{proof}[Proof of Theorem~\ref{thm: n-2 measure of singular set is zero}]
First of all, note that $\singu$ is a Lebesgue $\leb^n$-null set, for otherwise it cannot have finite Dirichlet energy by Corollary~\ref{cor: eps-reg}. Now fix any compact set $\K\subset\Omega$ such that ${\rm dist}(\K,\p\Omega) \geq \delta_0>0$. If $y\in\singu\cap\K$, then for all $\rho<\delta_0$,
\begin{align*}
    \rho^{2-n}\int_{\ball{\rho}{y}}|\na u|^2\,\dd x \geq \e > 0
\end{align*}
for $\e$ as in the epsilon-regularity Theorem~\ref{thm: epsilon reg}. We next fix some $\delta<\delta_0$ and cover $\K \cap \singu$ by balls $\{\ball{\delta}{y_j}\}_{j=1}^Q$, such that $y_j \in \K\cap\singu$, $\{\ball{\delta/2}{y_j}\}_{j=1}^Q$ are pairwise disjoint, and $Q$ is the maximal integer such that this cover exists. Then
\begin{align*}
    Q\e &\leq \left(\frac{\delta}{2}\right)^{2-n} \int_{\bigsqcup_{j=1}^Q\ball{\delta/2}{y_j}} |\na u|^2\,\dd x.
\end{align*}
Since $\bigsqcup_{j=1}^Q\ball{\delta/2}{y_j} \subset [\K\cap\singu]+\ball{\delta}{0}$, we thus have
\begin{align*}
    Q\delta^{n-2} \leq \frac{2^{n-2}}{\e} \int_{[\K\cap\singu]+\ball{\delta}{0}}|\na u|^2\,\dd x.
\end{align*}
But $\singu$ is an $\leb^n$-null set, sending $\delta \to 0^+$ yields that
\begin{align*}
\lim_{\delta\to 0^+}    Q\delta^{n-2} = 0
\end{align*}
thanks to the dominated convergence theorem, where 
\begin{align*}
    Q\delta^{n-2} = \sum\Big\{\text{radii of the balls in the cover $\{\ball{\delta}{y_j}\}$ for $\K\cap\singu$}\Big\}^{n-2}.
\end{align*}
We may now conclude from the definition of Hausdorff measure.     
\end{proof}

\section{Blow-up analysis and tangent maps}\label{sec: blowup}

\subsection{Definition and basic properties}

To study the geometric-analytic properties of singularities of harmonic maps, we zoom in by rescaling at potential singularities and investigate the ``tangent maps''; \emph{i.e.}, the blow-up limits that exhibit, by construction, various symmetric properties. This is analogous to the study of tangent cones for minimal surfaces.  
 
\begin{definition}\label{def: blowup}
    For $u: \Omega \to \N$ and $\ball{\rho_0}{y}\Subset \Omega$, $\rho \in ]0,\rho_0]$, consider the blow-up of $u$ at $y$:
    \begin{equation*}
\left(T_{y,\rho}u\right)(x) \equiv u_{y,\rho}(x) := u(y+\rho x) \qquad \text{for } x \in \ball{1}{0}.
    \end{equation*}
\end{definition}
We observe that $u_{y, \rho_j}$ converges weakly in $W^{1,2}$ for $\{\rho_j\}\to 0^+$:
\begin{lemma}\label{lem: convergence to tangent map}
For a sequence of radii $\{\rho_j\} \to 0^+$ and a stationary harmonic map $u: \Omega \subset \R^n \to \N$, for each $y \in \Omega$ there exists a subsequence $\{\rho_{j'}\} \subset \{\rho_j\}$ such that $u_{y,\rho_j}$ converges weakly to $\phi: \R^n \to \N$. If $u$ is furthermore minimising, then $\phi$ is also minimising, and the convergence $u_{y,\rho_{j'}
} \to \phi$ is strong in $W^{1,2}_\loc$. 
\end{lemma}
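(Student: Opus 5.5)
The plan is to prove the three assertions in turn: uniform energy bounds from monotonicity, then compactness, then minimality of the limit via a Luckhaus-type gluing, and finally strong convergence via a comparison argument.

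\textbf{Uniform bounds.} First I derive uniform local energy bounds from the monotonicity identity~\eqref{monotonicity identity}. For fixed $R>0$ and $j$ large enough that $R\rho_j<\rho_0$, a change of variables gives
\begin{align*}
R^{2-n}\int_{\ball{R}{0}}|\na u_{y,\rho_j}|^2\,\dd x = (R\rho_j)^{2-n}\int_{\ball{R\rho_j}{y}}|\na u|^2\,\dd x \leq \rho_0^{2-n}\int_{\ball{\rho_0}{y}}|\na u|^2\,\dd x =: \Lambda.
\end{align*}
Since $\N$ is compact, $|u_{y,\rho_j}|$ is also uniformly bounded. Hence $\{u_{y,\rho_j}\}$ is bounded in $W^{1,2}(\ball{R}{0};\R^p)$ for every $R$.

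\textbf{Weak limit.} A diagonal argument over $R=1,2,3,\ldots$ extracts a subsequence converging weakly in $W^{1,2}_\loc(\R^n)$ and strongly in $L^2_\loc$ (by Rellich) to some $\phi$. Passing to a further subsequence, the convergence is also a.e. Since $\N$ is closed, $\phi(x)\in\N$ a.e. This gives the first assertion.

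\textbf{Minimising case: the limit is minimising.} Each $u_{y,\rho_j}$ is minimising on $\ball{R}{0}$, because the rescaling preserves minimality. Fix a ball $\ball{r}{0}$ and a competitor $v\in W^{1,2}(\ball{r}{0};\N)$ with $v=\phi$ on $\p\ball{r}{0}$. By Fatou's lemma and Fubini, I choose a good radius $r'$ slightly smaller than $r$, for which three things hold:
\begin{itemize}
\item $\liminf_j \int_{\p\ball{r'}{0}}\bigl(|\na u_j|^2+|\na\phi|^2\bigr)\,\dd\hau^{n-1}<\infty$;
\item $\int_{\p\ball{r'}{0}}|u_j-\phi|^2\,\dd\hau^{n-1}\to0$ along the subsequence;
\item the energy of $\phi$ (and of $v$, after adapting $v$ to the radius $r'$) on the annulus between $r'$ and $r$ is small.
\end{itemize}
I then invoke the Luckhaus lemma (the boundary-gluing version underlying Lemma~\ref{lem: luckhaus}, as in \cite{luck}) on a thin annulus $\ball{r'}{0}\setminus\ball{(1-\lambda)r'}{0}$. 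This produces $\N$-valued maps $w_j$ interpolating between $u_j$ on the outer sphere and the rescaled $v$ on the inner sphere. Their energy is bounded by
\begin{align*}
C\lambda\int_{\p\ball{r'}{0}}\bigl(|\na u_j|^2+|\na\phi|^2\bigr)+C\lambda^{-1}\int_{\p\ball{r'}{0}}|u_j-\phi|^2 .
\end{align*}
This is the main obstacle: the naive linear interpolation leaves $\N$, and projecting it back with $\Pi$ is only possible where the distance to $\N$ is below $\delta(\N)$. Luckhaus' construction handles this by a cubical decomposition and a careful estimate, which I will quote rather than reprove.

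By minimality of $u_j$, $\E_{\ball{r'}{0}}[u_j]\leq \E_{\ball{r'}{0}}[\text{competitor}_j]$. Letting $j\to\infty$, then $\lambda\to0$, then $r'\to r$, and using lower semicontinuity, I obtain
\begin{align*}
\E_{\ball{r}{0}}[\phi]\leq\liminf_j \E_{\ball{r}{0}}[u_j]\leq \limsup_j \E_{\ball{r}{0}}[u_j]\leq \E_{\ball{r}{0}}[v],
\end{align*}
so $\phi$ is minimising.

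\textbf{Strong convergence.} Choosing $v=\phi$ in the chain above gives $\limsup_j\E_{\ball{r}{0}}[u_j]\leq\E_{\ball{r}{0}}[\phi]$, up to the annular error, which tends to zero. Combined with lower semicontinuity, this yields $\|\na u_j\|_{L^2}\to\|\na\phi\|_{L^2}$ on $\ball{r}{0}$ for a.e.\ $r$. Norm convergence together with weak convergence gives strong convergence in $W^{1,2}(\ball{r}{0})$, and hence in $W^{1,2}_\loc$.
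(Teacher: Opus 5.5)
Your proposal is correct and follows essentially the same route as the paper:
\begin{itemize}
\item uniform energy bounds from the monotonicity identity, then weak compactness;
\item a Luckhaus-type gluing on a thin annulus, compared against the minimality of the rescalings, to show $\phi$ is minimising;
\item the choice $v=\phi$ to obtain energy convergence, hence strong $W^{1,2}_\loc$-convergence.
\end{itemize}
You pick the good radius by Fatou/Fubini on spheres, whereas the paper pigeonholes an annulus of small energy; this is only a bookkeeping difference. One small point: it is cleaner to take the good radius $r'$ slightly \emph{larger} than $r$ and extend $v$ by $\phi$. Then $v=\phi$ near $\p\ball{r'}{0}$, the inner Luckhaus data match exactly, and your displayed chain on $\ball{r}{0}$ holds verbatim via $\E_{\ball{r}{0}}[u_j]\leq\E_{\ball{r'}{0}}[u_j]$. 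With $r'<r$, you would instead have to modify $v$ explicitly so that its trace is $\phi|_{\p\ball{r'}{0}}$, and run the chain on $\ball{r'}{0}$ before letting $r'\to r$.
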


\begin{definition}
    In Lemma~\ref{lem: convergence to tangent map} above, $\phi$ is said to be a tangent map of $u$ at $y$.
\end{definition}

Ding--Li--Li~\cite{dll} constructed examples of a weakly $W^{1,2}$-convergent sequence of stationary harmonic maps whose weak limit is not stationary. Lin~\cite{lin} showed that if $u$ is stationary and $\N$ does not carry harmonic 2-spheres, then $\phi$ is stationary. Also, Li--Tian \cite{lt} showed that if $n=3$ and $u$ is stationary, then $\phi$ is stationary. To the author's knowledge, it remains unknown if this holds for domain manifolds of dimension $\geq 4$; the counterexample in \cite{dll} does not arise from the blow-up of stationary harmonic maps as in Definition~\ref{def: blowup}.

\begin{proof}[Sketch of proof of Lemma~\ref{lem: convergence to tangent map}]

We first note that the $\dot{W}^{1,2}$-norm of $\left\{u_{y,\rho_j}\right\}_{j \in \mathbf{N}}$ is uniformly bounded. Then, by Poincar\'{e}'s inequality, the $W^{1,2}$-norm is uniformly bounded, from which weak convergence follows. Indeed, let $\ball{\rho_0}{y}\subset \Omega$ and $\gamma \in ]0,1[$ such that $\gamma\rho<\rho_0$. Then by a change of variables $x=y+\rho z$ and the monotonicity formula~\eqref{monotonicity identity}, 
\begin{align*}
    (\gamma\rho)^{2-n}\int_{\ball{\gamma\rho}{y}} |\na u(x)|^2\,\dd x &= \gamma^{2-n}\int_{\ball{\gamma}{0}}\left|\na u_{y,\rho} (z)\right|^2\,\dd z \\
    &\leq \rho_0^{2-n}\int_{\ball{\rho_0}{y}} |\na u(x)|^2\,\dd x.
\end{align*}
Thus, for any $\{\rho_j\}\to 0^+$ and fixed $\gamma \in ]0,1[$ we have that
\begin{align*}
    \limsup_{j \to \infty} \int_{\ball{\gamma}{0}}\left|\na u_{y,\rho_j} (z)\right|^2\,\dd z < \infty.
\end{align*}

Now assume that $u$ is minimising. Then for any $\delta>0$, by pigeonholing we may select a radius $\rho$ such that modulo subsequences,
\begin{align}\label{ineq}
    \rho_0^{2-n}\int_{\ball{\rho(1+\e)}{y} \setminus \ball{\rho}{y}} |\na u_{y,\rho_{j'}}|^2\,\dd x \leq \delta.
\end{align}
By weak convergence, the same inequality holds for $\phi$. By a variant of the Luckhaus Lemma~\ref{lem: luckhaus}, one may construct for each $j'$ a comparison map $w_{j'}$, which coincides with $\phi$ inside $\ball{\rho}{y}$ and with $u_{j'}$ outside $\ball{\rho(1+\e)}{y}$, and also satisfies the bound~\eqref{ineq}. We then conclude from the energy minimality of $u_j$, the comparison with $w_{j'}$, as well as the weak convergence $u_{y, \rho_j'} \weak \phi$ in $W^{1,2}$ to conclude that $\phi$ is minimising, from which the strong convergence follows.  \end{proof}

The second paragraph of the previous proof has been streamlined: we omit the details for the choice of the scale $\rho$ and the construction of the comparison map $w_{j'}$. In fact, the same argument leads to the following important
\begin{theorem}[Compactness Theorem]\label{thm: cptness}
    Let $\{u_j\}$ be a sequence of minimising harmonic maps in $W^{1,2}(\Omega;\N)$ such that $\sup_j \|\na u_j\|_{L^2(\ball{\rho}{y})}<\infty$ for all $\ball{\rho}{y} \Subset \Omega$. Then there exists some subsequence $\{u_{j'}\}$ such that $u_{j'} \to u$ strongly in $W^{1,2}\left(\overline{\ball{\rho}{y}};\N\right)$  for all $\ball{\rho}{y} \Subset \Omega$, where $u$ is also energy minimising. 
\end{theorem}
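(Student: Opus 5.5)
The plan follows the Schoen--Uhlenbeck/Luckhaus scheme that was sketched in the proof of Lemma~\ref{lem: convergence to tangent map}, now applied to a general sequence. There are three stages: extract a weak limit, show that it minimises by comparison, and then upgrade the convergence from weak to strong.

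\textbf{Step 1: weak limit.} By hypothesis $\sup_j\|\na u_j\|_{L^2(\ball{\rho}{y})}<\infty$, and since $\N$ is compact, $|u_j|$ is uniformly bounded. So $\{u_j\}$ is bounded in $W^{1,2}(\ball{\rho}{y};\R^p)$ for every $\ball{\rho}{y}\Subset\Omega$. We exhaust $\Omega$ by countably many such balls and take a diagonal subsequence. This gives $u_{j'}\weak u$ weakly in $W^{1,2}_\loc$ and, by Rellich, strongly in $L^2_\loc$. Passing to a further subsequence we get a.e.\ convergence, so $u(x)\in\N$ a.e. We also pass to a subsequence along which the energy measures $|\na u_{j'}|^2\,\dd x$ converge weakly-$*$ to a Radon measure $\mu\ge|\na u|^2\,\dd x$.

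\textbf{Step 2: $u$ is minimising.} Fix $\ball{\rho}{y}\Subset\Omega$ and a competitor $v\in W^{1,2}(\ball{\rho}{y};\N)$ with $v=u$ on $\p\ball{\rho}{y}$. Given $\eta>0$, we pick $r\in\,]\rho,\rho(1+\eta)[$ at which three things hold simultaneously. First, $\mu(\p\ball{r}{y})=0$. Second, $\liminf_{j'}\int_{\p\ball{r}{y}}(|\na u_{j'}|^2+|\na u|^2)\,\dd\hau^{n-1}\le C/(\rho\eta)$. Third, $\int_{\p\ball{r}{y}}|u_{j'}-u|^2\to0$. All three are available by Fubini/pigeonholing combined with the $L^2$ convergence. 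We also extend $v$ by $u$ on $\ball{r}{y}\setminus\ball{\rho}{y}$.

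We then use the variant of Luckhaus' Lemma~\ref{lem: luckhaus} mentioned after the proof of Lemma~\ref{lem: convergence to tangent map}. It interpolates between $u$ and $u_{j'}$ on a thin annulus $\ball{r(1+\lambda_{j'})}{y}\setminus\ball{r}{y}$, staying inside $\tdn$ and then projecting by $\Pi$. The resulting energy is
\[
\le C\lambda_{j'} r\int_{\p\ball{r}{y}}(|\na u_{j'}|^2+|\na u|^2)+\frac{C}{\lambda_{j'} r}\int_{\p\ball{r}{y}}|u_{j'}-u|^2 .
\]
We choose $\lambda_{j'}\to0$ slowly enough that this tends to $0$. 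This gives a comparison map $w_{j'}$ with $w_{j'}=v$ in $\ball{r}{y}$ and $w_{j'}=u_{j'}$ outside $\ball{r(1+\lambda_{j'})}{y}$. Minimality of $u_{j'}$ yields
\[
\int_{\ball{r}{y}}|\na u_{j'}|^2\le\int_{\ball{r}{y}}|\na v|^2+o(1).
\]
Letting $j'\to\infty$, using $\mu(\p\ball{r}{y})=0$ and lower semicontinuity, and then sending $\eta\to0$, we obtain
\[
\mu(\ball{\rho}{y})\le\int_{\ball{\rho}{y}}|\na v|^2,
\]
up to terms vanishing as $\eta\to0$. This holds because on the annulus the energy of $v$ equals that of $u$, which tends to zero with $\eta$.

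\textbf{Step 3: strong convergence.} Taking $v=u$ in Step 2 gives $\mu(\ball{\rho}{y})\le\int_{\ball{\rho}{y}}|\na u|^2$, so $\mu=|\na u|^2\,\dd x$. Hence $\|\na u_{j'}\|_{L^2(\ball{\rho}{y})}\to\|\na u\|_{L^2(\ball{\rho}{y})}$ on every ball with $\mu(\p\ball{\rho}{y})=0$, which is now every ball. Combined with weak convergence this gives strong convergence in $W^{1,2}(\ball{\rho}{y})$. Since $\mu\ge|\na u|^2$ always holds, Step 2 also gives $\int|\na u|^2\le\int|\na v|^2$, so $u$ is minimising.

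\textbf{Main obstacle.} The difficult step is the Luckhaus interpolation in Step 2. The linear interpolation $u_{j'}+\frac{|x-y|-r}{\lambda r}(u-u_{j'})$ need not stay in $\tdn$, because the traces are only $W^{1,2}$ on the sphere and so are not uniformly close. I would handle this as in Luckhaus: decompose $\p\ball{r}{y}$ into a cubical grid of scale comparable to $\lambda r$. Smallness of the trace difference on most cells, together with a Sobolev/Morrey-type estimate on $2$-skeleta, keeps the interpolant within $\delta$ of $\N$ on good cells. On the few bad cells one performs a homogeneous-degree-zero extension cell by cell, whose energy is controlled by the boundary energy times $\lambda r$. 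Only after this construction does one compose with $\Pi$, whose Lipschitz bound preserves the energy estimate.
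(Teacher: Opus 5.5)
Your proposal is correct and is essentially the argument the paper defers to (Schoen--Uhlenbeck, Hardt--Lin, Luckhaus, as sketched in the proof of Lemma~\ref{lem: convergence to tangent map}): choose a good radius by Fubini, build a Luckhaus comparison map, and deduce $\mu=|\na u|^2\,\dd x$, which gives strong convergence and minimality at once. Two details need correcting: as written, the interpolant's outer trace $u_{j'}|_{\p\ball{r}{y}}$ does not match $u_{j'}|_{\p\ball{r(1+\lambda_{j'})}{y}}$, so instead rescale $v$ onto $\ball{(1-\lambda_{j'})r}{y}$ and interpolate on $\ball{r}{y}\setminus\ball{(1-\lambda_{j'})r}{y}$, which glues consistently with $u_{j'}$; and in Luckhaus' lemma the pointwise control comes from the $1$-skeleton (on $2$-skeleta $W^{1,2}$ does not embed in $C^0$), which gives a \emph{uniform} bound on ${\rm dist}(w,\N)$ once $\lambda_{j'}\to0$ slowly relative to $\int_{\p\ball{r}{y}}|u_{j'}-u|^2$, so there are no bad cells at all --- the degree-zero extensions, used on every cell of dimension $\geq 3$, preserve the image and could not repair a bad cell anyway.
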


\begin{proof}[Proof of Theorem~\ref{thm: cptness}] 
We refer to Schoen--Uhlenbeck~\cite{su1}, Hardt--Lin~\cite{hl}, and Luckhaus~\cite{luck}.
\end{proof}

\begin{lemma}\label{lem: basic properties of harmonic maps}
Let $u: \Omega \subset \R^n \to \N$ be a stationary harmonic map, and let $\phi: \R^n \to \N$ be a tangent map of $u$ at any point. Then $\phi$ is 0-homogeneous (namely, $\phi(\lambda x)=\phi(x)$ for any $x \in \R^n$ and $\lambda>0$). If, in addition, that $u$ is energy-minimising, then \begin{equation}\label{two Theta}
    \Theta_u(y) = \Theta_\phi(0),
\end{equation}
and hence $y \in {\rm reg}(u)$ if and only if there exists a constant tangent map of $u$ at $y$.
\end{lemma}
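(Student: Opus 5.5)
The plan is to use the monotonicity identity~\eqref{monotonicity identity} together with the scaling behaviour of the normalised energy. For $\phi$ a tangent map at $y$, obtained as the weak limit of $u_{y,\rho_j}$, I would first record the scaling identity
\begin{align*}
\sigma^{2-n}\int_{\ball{\sigma}{0}}|\na u_{y,\rho_j}|^2\,\dd z = (\sigma\rho_j)^{2-n}\int_{\ball{\sigma\rho_j}{y}}|\na u|^2\,\dd x .
\end{align*}
Applying \eqref{monotonicity identity} to $u$ between the radii $\sigma\rho_j$ and $\tau\rho_j$, with $0<\sigma<\tau$, and changing variables then gives
\begin{align*}
2\int_{\ball{\tau}{0}\setminus\ball{\sigma}{0}} \frac{|\p_r u_{y,\rho_j}|^2}{|z|^{n-2}}\,\dd z = (\tau\rho_j)^{2-n}\!\int_{\ball{\tau\rho_j}{y}}|\na u|^2 - (\sigma\rho_j)^{2-n}\!\int_{\ball{\sigma\rho_j}{y}}|\na u|^2 .
\end{align*}
Both terms on the right converge to $\Theta_u(y)$ as $j\to\infty$, by the definition~\eqref{density, def}, so the left-hand side tends to $0$. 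The functional $v\mapsto\int_{\ball{\tau}{0}\setminus\ball{\sigma}{0}}|z|^{2-n}|\p_r v|^2$ is convex and continuous, hence weakly lower semicontinuous on $W^{1,2}$. It follows that $\p_r\phi=0$ a.e. on every annulus, so $\phi$ is $0$-homogeneous. This part uses only stationarity and weak convergence, via Lemma~\ref{lem: convergence to tangent map}.

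For the minimising case, Lemma~\ref{lem: convergence to tangent map} (or Theorem~\ref{thm: cptness}) gives strong convergence $u_{y,\rho_j}\to\phi$ in $W^{1,2}_\loc$. Passing to the limit in the scaling identity yields
\begin{align*}
\sigma^{2-n}\int_{\ball{\sigma}{0}}|\na\phi|^2 = \Theta_u(y) \qquad\text{for every } \sigma\in\,]0,1[ .
\end{align*}
Since $\phi$ is minimising, and hence stationary, its density $\Theta_\phi(0)$ exists and equals the limit of this constant as $\sigma\to0$. This proves \eqref{two Theta}. The main subtlety is exactly this step: strong convergence is needed so that the energy passes to the limit. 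Under weak convergence only lower semicontinuity is available, and energy concentration (as in the Hardt--Lin--Poon example) could occur.

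For the final equivalence, I would combine \eqref{two Theta} with Corollary~\ref{cor: eps-reg}. If $y\in{\rm reg}(u)$, then $u$ is smooth near $y$, so $u_{y,\rho}\to u(y)$ and every tangent map is the constant $u(y)$; in particular a constant tangent map exists. Conversely, if some tangent map $\phi$ is constant, then $\Theta_\phi(0)=0$, so $\Theta_u(y)=0$ by \eqref{two Theta}, and Corollary~\ref{cor: eps-reg} gives $y\in{\rm reg}(u)$.
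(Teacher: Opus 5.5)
Your proposal is correct and follows the paper's proof. Monotonicity on the rescaled annuli forces the weighted radial energy $\int |z|^{2-n}|\p_r u_{y,\rho_j}|^2$ to vanish, and this passes to $\phi$ under weak convergence (you use weak lower semicontinuity, the paper uses strong $L^2$ convergence of $\p_r u_{y,\rho_j}$ to zero); strong convergence in the minimising case then transfers the density, and Corollary~\ref{cor: eps-reg} gives the characterisation of regular points, where your explicit treatment of both directions fills in a step the paper leaves implicit.
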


\begin{example}\label{example: hlp}
Hardt--Lin--Poon~\cite{hlp} constructed a stationary axially symmetric harmonic map $u: \bthree \to \stwo$ such that $\singu=\{0\}$, $u_{\rho_j,0} \weak q = $ constant weakly in $W^{1,2}_\loc$, but $\frac 12|\na u_{\rho_j,0}|^2\leb^3\mres \bthree$ converges as Radon measures to $8\pi \hau^1\mres L$, where $L$ is the symmetry axis $\{z=0\}\cap \overline{\bthree}$. That is, all the kinetic energy goes to the defect measure in the blow-up limit.

\end{example}

\begin{proof}[Proof of Lemma~\ref{lem: basic properties of harmonic maps}]
Observe by a change of variables that
\begin{align*}
    R^{2-n}\int_{\ball{R}{y}}|\na u_{\rho_j, y}|^2\,\dd x =  (\rho_j R)^{2-n}\int_{\ball{R \rho_j}{y}}|\na u|^2\,\dd x.
\end{align*}
Thus, for any $0<r<R<{\rm dist}(y,\p\Omega)$, we  deduce from the monotonicity identity~\eqref{monotonicity identity} and the existence of density function $\Theta_u(y)$ that
\begin{align*}
    \lim_{j \to \infty} \int_{\ball{R}{y} \setminus \ball{r}{y}} |x|^{2-n} \left|\frac{\p u_{\rho_j,y}}{\p r}\right|^2\,\dd x = 0.
\end{align*}
In particular, $ \frac{\p u_{\rho_j,y}}{\p r} \to 0$ strongly in $L^2(\ball{R}{y} \setminus \ball{r}{y})$. But by Lemma~\ref{lem: convergence to tangent map} we have $u_{\rho_j,y} \weak \phi$ weakly in $W^{1,2}$. As $r$ and $R$ are arbitrary, we thus conclude that $\frac{\p\phi}{\p r}=0$ \textit{a.e.} on $\R^n\setminus \{0\}$.

For the second statement, recall that $y$ is a regular point if and only if the density $\Theta_u(y) = 0$, by $\e$-regularity Theorem~\ref{thm: epsilon reg}. For minimising harmonic maps we have $u_{\rho_{j},y} \to \phi$ strongly in $W^{1,2}$ modulo subsequences (Lemma~\ref{lem: convergence to tangent map}), so $\Theta_{\phi}(0) = \Theta_u(y)$.  \end{proof}

A direct consequence of the 0-homogeneity of tangent map $\phi$ is that it is discontinuous at $0$ unless it is constant. Also observe that:

\begin{lemma}\label{lem: maximisation of density for tgt map}
Let $\phi$ be a tangent map of a stationary harmonic map $u$. The density function $\Theta_\phi$ is  maximised at $0$. Meanwhile, if $\Theta_\phi$ is maximised at $y \in \R^n$, then $\frac{x-y}{|x-y|}\cdot \na\phi(x) = 0$ a.e..
\end{lemma}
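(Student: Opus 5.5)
Both assertions follow from the $0$-homogeneity of $\phi$ (Lemma~\ref{lem: basic properties of harmonic maps}) combined with the monotonicity identity~\eqref{monotonicity identity}, applied to $\phi$ itself at different centres. I take $\phi$ to be stationary on $\R^n$, so that \eqref{monotonicity identity} holds for $\phi$ at every centre and every radius. This is automatic in the minimising case by Lemma~\ref{lem: convergence to tangent map}; in the stationary case it is the standing assumption implicit in the statement, in line with the remarks following Lemma~\ref{lem: convergence to tangent map}. I also use local finiteness of $\int_{\ball{R}{0}}|\na\phi|^2$, which follows from the uniform bounds in the proof of Lemma~\ref{lem: convergence to tangent map}.

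\emph{Step 1: the density at $0$.} By $0$-homogeneity, $|\na\phi(\lambda x)| = \lambda^{-1}|\na\phi(x)|$, so a change of variables shows that $\rho\mapsto \rho^{2-n}\int_{\ball{\rho}{0}}|\na\phi|^2\,\dd x$ is constant. Hence
\begin{equation*}
\Theta_\phi(0)=\rho^{2-n}\int_{\ball{\rho}{0}}|\na\phi|^2\,\dd x \qquad\text{for every } \rho>0 .
\end{equation*}

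\emph{Step 2: $\Theta_\phi(y)\le\Theta_\phi(0)$.} Fix $y\in\R^n$ and $\rho>0$. Monotonicity at $y$, followed by the inclusion $\ball{\rho}{y}\subset\ball{\rho+|y|}{0}$, gives
\begin{equation*}
\Theta_\phi(y)\le \rho^{2-n}\int_{\ball{\rho}{y}}|\na\phi|^2\,\dd x\le \Big(\frac{\rho+|y|}{\rho}\Big)^{n-2}(\rho+|y|)^{2-n}\int_{\ball{\rho+|y|}{0}}|\na\phi|^2\,\dd x .
\end{equation*}
The last integral term equals $\Theta_\phi(0)$ by Step 1. Letting $\rho\to\infty$ sends the prefactor to $1$, which yields $\Theta_\phi(y)\le\Theta_\phi(0)$. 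So $0$ is a maximum point.

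\emph{Step 3: equality case.} Suppose $\Theta_\phi(y)=\Theta_\phi(0)$. Apply \eqref{monotonicity identity} at the centre $y$ with $\sigma\to0$ and $\rho\to\infty$:
\begin{equation*}
2\int_{\R^n}\frac{|\na\phi(x)\cdot\frac{x-y}{|x-y|}|^2}{|x-y|^{n-2}}\,\dd x=\lim_{\rho\to\infty}\rho^{2-n}\int_{\ball{\rho}{y}}|\na\phi|^2\,\dd x-\Theta_\phi(y).
\end{equation*}
The limit on the right exists by monotonicity and is at most $\Theta_\phi(0)$, by the estimate of Step 2. Squeezing from the other side, $\ball{\rho}{y}\supset\ball{\rho-|y|}{0}$ gives a lower bound tending to $\Theta_\phi(0)$. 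So the limit equals $\Theta_\phi(0)$ and the right-hand side vanishes. The integrand is therefore zero a.e., i.e.\ $\frac{x-y}{|x-y|}\cdot\na\phi(x)=0$ a.e.

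\emph{Main obstacle.} The only delicate points are the validity of monotonicity for $\phi$ at arbitrary centres, which needs stationarity of $\phi$ as discussed above, and the passage $\sigma\to0$, $\rho\to\infty$ in \eqref{monotonicity identity}. The latter is justified by monotone convergence, since the integrand is nonnegative. The limit $\rho\to\infty$ is controlled by the two-sided ball comparison with balls centred at $0$.
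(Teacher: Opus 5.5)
Your proof is correct and follows essentially the same route as the paper. The ingredients are monotonicity at $y$ with $\sigma\to0$, the inclusion $\ball{\rho}{y}\subset\ball{\rho+|y|}{0}$, the scale-invariance of $\rho^{2-n}\int_{\ball{\rho}{0}}|\nabla\phi|^2\,\dd x$ coming from $0$-homogeneity, and then $\rho\to\infty$. The paper keeps the term $2\int_{\ball{\rho}{y}}|x-y|^{2-n}|\frac{x-y}{|x-y|}\cdot\nabla\phi|^2\,\dd x$ on the left of a single chain of inequalities, which yields both conclusions at once, so the lower-bound squeeze in your Step~3 is unnecessary, though harmless. You are right that monotonicity for $\phi$ at arbitrary centres needs $\phi$ itself to be stationary; the paper's proof uses this implicitly as well.
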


In other words, if the density function of $\phi$ attains its maximum at a point, then $\phi$ is radially symmetric with that point as the centre.

\begin{proof}[Proof of Lemma~\ref{lem: maximisation of density for tgt map}]
In view of the monotonicity formula~\eqref{monotonicity identity}, the definition of density, and the 0-homogeneity of the tangent map (Lemma~\ref{lem: basic properties of harmonic maps}), we compute that
\begin{align*}
&    \Theta_\phi(y) + 2 \int_{\ball{\rho}{y}} |x-y|^{2-n} \left|\frac{\p\phi}{\p R_y} \right|^2\,\dd x\\
&\qquad= \rho^{2-n}\int_{\ball{\rho}{y}} |\na \phi|^2\,\dd x \\
    &\qquad \leq \rho^{2-n}\int_{\ball{\rho+|y|}{0}} |\na \phi|^2\,\dd x \\
    &\qquad \leq  \left(1+\frac{|y|}{\rho}\right)^{n-2} (\rho+|y|)^{2-n}\int_{\ball{\rho+|y|}{0}} |\na \phi|^2\,\dd x\\
    &\qquad \equiv  \left(1+\frac{|y|}{\rho}\right)^{n-2} \Theta_\phi(0),
\end{align*}
where $\frac{\p}{\p R_y}(x) = \frac{x-y}{|x-y|}\cdot \na_x$. Send $\rho \to +\infty$ to conclude.   \end{proof}

Tangent maps, even for minimising harmonic maps, may fail to be unique in general. Distinct $\phi$ may be obtained as weak subsequential blow-up limits along different subsequences of $\{\rho_j\}$. White~\cite{w} proved that there exists a 5-dimensional $C^\infty$-target manifold $\N$ and an open set $U$ of smooth boundary maps, such that any minimising harmonic map ${\bf B}^4 \to \N$ with boundary data in $U$ has an isolated singularity $x$ and a continuum of tangent maps at $x$. This won't happen for minimising harmonics maps $\M^3 \to \N^2$; see Gulliver--White~\cite{gw}. On the other hand, if the target manifold $\N$ is real-analytic rather than $C^\infty$, then a seminal result by Simon~\cite{simon'} shows that any minimising harmonic map has a unique tangent map at an isolated singularity. Quantitative information of the convergence rate to the unique tangent map is also obtained~\cite{simon'}.

\subsection{Spine and symmetries of tangent map}

We now connect the study of tangent maps to the analysis of singularities of harmonic maps. Motivated by Lemma~\ref{lem: maximisation of density for tgt map}, let us introduce
\begin{definition}
The spine of a tangent map $\phi$ is defined as
\begin{equation}\label{def: S}
    \bigs (\phi):=\Big\{y \in \R^n:\, \Theta_\phi(y) = \Theta_\phi(0)\Big\}.
\end{equation}
\end{definition}

\begin{proposition}\label{propn: S}
    Let $\phi:\R^n\to\N$ be the tangent map of a stationary harmonic map $u: \Omega \subset \R^n \to \N$ at $y \in \Omega$. Then 
    \begin{itemize}
        \item 
    $\bigs(\phi)$ is a linear subspace of $\R^n$. Also, $\bigs(\phi) = \{y \in \R^n:\, \phi(x+y)=\phi(x)\text{ for all } x \in \R^n\}$. 
    \item 
    If $\phi$ is nonconstant, then $\bigs(\phi) \subset \sing(\phi)$.
    \end{itemize}

Furthermore, if $u$ is also a minimising harmonic map, then $y\in\singu$ if and only if $\dim\bigs(\phi) \leq n-1$ for any tangent map $\phi$ of $u$ at $y$.     
\end{proposition}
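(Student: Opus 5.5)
The plan is to first identify $\bigs(\phi)$ with the set of translation invariances, and then read off linearity and the other two assertions from that description.

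\emph{Step 1.} Let $y\in\bigs(\phi)$, so that $\Theta_\phi(y)=\Theta_\phi(0)$. Lemma~\ref{lem: basic properties of harmonic maps} says $\phi$ is $0$-homogeneous about $0$. Lemma~\ref{lem: maximisation of density for tgt map} says that, since $\Theta_\phi$ is maximised at $y$, $\phi$ is also radially constant about $y$, i.e.\ $\frac{x-y}{|x-y|}\cdot\na\phi(x)=0$ a.e. For $\phi\in W^{1,2}_\loc$ this means $\phi(y+\lambda(x-y))=\phi(x)$ for a.e.\ $x$ and all $\lambda>0$, after passing to a suitable representative (via Fubini along rays). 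Combining this with $0$-homogeneity about $0$, for a.e.\ $x$ and all $\lambda>0$ we get
\[
\phi(x+y)=\phi\big(\lambda^{-1}(x+y)\big),
\]
and radial invariance about $y$ gives
\[
\phi\big(\lambda^{-1}(x+y)\big)=\phi\big(y+\lambda(\lambda^{-1}(x+y)-y)\big)=\phi\big(x+(1-\lambda)y\big).
\]
Thus $\phi(x+y)=\phi(x+ty)$ for every $t\in\R$; taking $t=0$ gives $\phi(x+y)=\phi(x)$, and we also get invariance under all of $\R y$. Conversely, if $\phi(\cdot+y)=\phi$, then the energy densities agree: $\Theta_\phi(y)=\Theta_\phi(0)$, since $\rho^{2-n}\int_{\ball{\rho}{y}}|\na\phi|^2=\rho^{2-n}\int_{\ball{\rho}{0}}|\na\phi|^2$. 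This proves the description
\[
\bigs(\phi)=\{y:\,\phi(\cdot+y)=\phi\}.
\]
The translation-invariance set is an additive subgroup, because the composition of translations is a translation. It is also closed under real scalars, by the $t$-invariance just shown together with $0$-homogeneity. Hence it is a linear subspace.

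\emph{Step 2.} Suppose $\phi$ is nonconstant and $y\in\bigs(\phi)$. Then $\Theta_\phi(y)=\Theta_\phi(0)$. If $\Theta_\phi(0)=0$, then by Lemma~\ref{lem: maximisation of density for tgt map} the density $\Theta_\phi$ vanishes everywhere. By Corollary~\ref{cor: eps-reg} $\phi$ is then smooth everywhere, and being $0$-homogeneous it is continuous at $0$ and therefore constant, a contradiction. So $\Theta_\phi(y)>0$, and Corollary~\ref{cor: eps-reg} gives $y\in\sing(\phi)$. Applying Corollary~\ref{cor: eps-reg} to $\phi$ needs $\phi$ stationary. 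This is guaranteed when $u$ is minimising, which is the case used below. In general I would argue directly that a positive-density point of a homogeneous map lies in its singular set, or else add stationarity of $\phi$ as a standing assumption in this bullet; I expect this to be the main delicate point.

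\emph{Step 3.} Now let $u$ be minimising. By Lemma~\ref{lem: basic properties of harmonic maps}, $y\in{\rm reg}(u)$ iff some tangent map is constant, and in fact iff all are, since $\Theta_\phi(0)=\Theta_u(y)$ for every tangent map $\phi$. If $y\in{\rm reg}(u)$, every tangent map $\phi$ is constant, so $\bigs(\phi)=\R^n$ and $\dim\bigs(\phi)=n>n-1$. If $y\in\singu$, every tangent map $\phi$ satisfies $\Theta_\phi(0)=\Theta_u(y)>0$, so $\phi$ is nonconstant. If we had $\dim\bigs(\phi)=n$, then $\phi$ would be invariant under all translations and hence constant, a contradiction. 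So $\dim\bigs(\phi)\le n-1$, and the equivalence follows.
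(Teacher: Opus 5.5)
\textbf{Gap in Step 2.} Your Steps 1 and 3 follow the paper's proof. You combine homogeneity about $0$ with radial invariance about $y$ (Lemma~\ref{lem: maximisation of density for tgt map}) to get translation invariance along $y$, and read off linearity. For minimising $u$ you then use that $\dim\bigs(\phi)=n$ iff $\phi$ is constant iff $\Theta_u(y)=0$.

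There is one algebra slip in Step 1, and it is harmless. In fact $y+\lambda(\lambda^{-1}(x+y)-y)=x+(2-\lambda)y$, not $x+(1-\lambda)y$. As written, your ``$t=0$'' case is just the trivial identity at $\lambda=1$. The corrected formula at $\lambda=2$ gives $\phi(x+y)=\phi(x)$. Invariance under all of $\R y$ then follows from the group property and homogeneity.

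The genuine gap is the second bullet, which you do not prove for stationary $u$. You route it through Corollary~\ref{cor: eps-reg} applied to $\phi$, and twice: once to get smoothness from $\Theta_\phi\equiv 0$, and once to get singularity from $\Theta_\phi(y)>0$. That corollary needs $\phi$ stationary. A tangent map of a stationary map is not known to be stationary in general; the paper notes this is open for domains of dimension $\geq 4$. You then close the argument only by restricting to minimising $u$ or by adding a hypothesis.

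No regularity theory for $\phi$ is needed here, so the point you flagged as delicate is in fact elementary:
\begin{itemize}
\item \emph{Nonconstant gives positive density at $0$.} By $0$-homogeneity, $\rho^{2-n}\int_{\ball{\rho}{0}}|\na\phi|^2\,\dd x=\int_{\ball{1}{0}}|\na\phi|^2\,\dd x$ for every $\rho$. Hence $\Theta_\phi(0)>0$ unless $\phi$ is constant.
\item \emph{Positive density gives singular.} This holds for any $W^{1,2}$ map. If $\phi$ is smooth near $y$, then $\rho^{2-n}\int_{\ball{\rho}{y}}|\na\phi|^2\,\dd x=O(\rho^2)$, so $\Theta_\phi(y)=0$.
\end{itemize}
The paper's route is even shorter and uses only what you proved in Step 1. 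A nonconstant $0$-homogeneous map has no continuous representative near $0$, so $0\in\sing(\phi)$. The invariance $\phi(\cdot+y)=\phi$ for $y\in\bigs(\phi)$ then carries this singularity to every point of the spine. As written, though, your proposal does not establish $\bigs(\phi)\subset\sing(\phi)$ in the stated generality.
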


Example~\ref{example: hlp} by Hardt--Lin--Poon shows that the ``furthermore'' part is false for stationary harmonic maps. 

\begin{proof}[Proof of Proposition~\ref{propn: S}]
Denote momentarily $\bigs'(\phi) = \{y \in \R^n:\, \phi(x+y)=\phi(x)\text{ for all } x \in \R^n\}$. By a simple change of variables and the definition of $\Theta_\phi$, if  $y \in \bigs'(\phi)$, then $\Theta_\phi(y) = \Theta_\phi(0)$. Thus $\bigs'(\phi) \subset \bigs(\phi)$. On the other hand, in view of Lemma~\ref{lem: maximisation of density for tgt map} and the 0-homogeneity of $\phi$ in Lemma~\ref{lem: basic properties of harmonic maps}, 
\begin{align*}
    \phi(x) = \phi(\lambda x) = \phi(y + (\lambda x-y)) = \phi\left(y + \frac{x}{\lambda} - \frac{y}{\lambda^2}\right) = \phi\left(x + \left(\lambda - \frac{1}{\lambda}\right)y\right)
\end{align*}
for any $x \in \R^n$, $\lambda >0$, and $y \in \bigs(\phi)$. As $\lambda \in ]0,\infty[$ is arbitrary, $\left(\lambda - \lambda^{-1}\right)$ varies through $\R$. This yields $\R\bigs(\phi) \subset \bigs'(\phi)$. Thus, $\bigs(\phi) = \bigs'(\phi)$, and this set is closed under scalar multiplication. But by construction $\bigs'(\phi)$ is closed under addition, so $\bigs(\phi)$ is a linear subspace.

If $\phi$ is nonconstant, then $\phi$ is discontinuous at $0$ (by 0-homogeneous; Lemma~\ref{lem: basic properties of harmonic maps}), and hence $0 \in \sing(\phi)$. By the earlier part, $\phi$ is discontinuous at any point in $\bigs(\phi)$, so $\bigs(\phi) \subset \sing(\phi)$.

Finally, again by the first statement, $\dim\bigs(\phi) = n$ if and only if $\phi$ is constant. By the final part in Lemma~\ref{lem: basic properties of harmonic maps}, this is the case if and only if $y \in {\rm reg}(u)$ if $u$ is minimising.    \end{proof}

\subsection{Dimension estimate for singular set of minimising harmonic maps}
Recall that any stationary harmonic map $u$ satisfies $\hau^{n-2}(\singu) =0$ (Theorem~\ref{thm: n-2 measure of singular set is zero}), so $\singu$ is of codimension $\geq 2$. In fact, a key result for minimising harmonic maps is the following:
\begin{theorem}[Schoen--Uhlenbeck~\cite{su1}]\label{thm: codimenion 3}
Let $u: \Omega \subset \R^n \to \N$ be a \underline{minimising} harmonic map. Then the singular set of $u$ is of codimension 3 or higher, namely
$$\dim\singu \leq n-3.$$   
\end{theorem}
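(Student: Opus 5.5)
The proof will combine Federer's dimension reduction with two facts about minimising tangent maps. First, homogeneous minimising maps with an $(n-1)$-dimensional spine are constant. Second, homogeneous minimising maps with an $(n-2)$-dimensional spine are constant, because minimising maps in dimension $2$ are smooth. The abstract dimension reduction principle then shows that any $\sigma$-measurable dimension of $\singu$ exceeding $n-3$ would produce a tangent map, or an iterated tangent map, whose spine has dimension $n-2$. That is impossible. The ingredients already available are:
\begin{itemize}
\item the Compactness Theorem~\ref{thm: cptness}, so that tangent maps and tangent maps of tangent maps are again minimising and the convergence is strong;
\item the upper semicontinuity of $\Theta$ together with Lemma~\ref{lem: basic properties of harmonic maps}, which gives $\Theta_u(y)=\Theta_\phi(0)$;
\item Proposition~\ref{propn: S} on spines.
\end{itemize}

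\textbf{Step 1 (upper semicontinuity of the singular set).} Suppose minimising maps $u_j\to u$ strongly in $W^{1,2}_{\loc}$, $y_j\in\sing(u_j)$ and $y_j\to y$. I will show that $y\in\singu$. By Corollary~\ref{cor: eps-reg}, $\rho^{2-n}\int_{\ball{\rho}{y_j}}|\na u_j|^2\ge\e$ for all small $\rho$. Strong convergence passes this bound to the limit, giving $\rho^{2-n}\int_{\ball{\rho}{y}}|\na u|^2\ge\e$. Hence $\Theta_u(y)\ge\e$ and $y\in\singu$.

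\textbf{Step 2 (dimension reduction).} Suppose $\hau^{s}(\singu)>0$ for some $s>n-3$. At $\hau^s$-a.e. point of $\singu$ the upper $s$-density is positive. Blowing up at such a point $y$ along radii $\rho_j\to0$ chosen so that $\hau^s_\infty\big((\singu-y)/\rho_j\cap\ball{1}{0}\big)\ge c>0$, I use the compactness of minimising maps, Step 1, and the upper semicontinuity of $\hau^s_\infty$ under Hausdorff convergence of compact sets. Together these produce a tangent map $\phi$ with $\hau^s(\sing\phi)>0$. Now $\sing\phi$ is a cone, and if $\sing\phi\neq\bigs(\phi)$ I blow up $\phi$ at a point $z\notin\bigs(\phi)$ of positive upper density. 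The resulting tangent map $\phi_1$ is again $0$-homogeneous and minimising. It is translation invariant along $\bigs(\phi)$ and also along $\R z$, which follows from the homogeneity of $\phi$ about $0$ and Lemma~\ref{lem: maximisation of density for tgt map}. So $\dim\bigs(\phi_1)>\dim\bigs(\phi)$, and still $\hau^s(\sing\phi_1)>0$. After at most $n$ iterations we reach a nonconstant minimising homogeneous $\psi$ with $\sing\psi=\bigs(\psi)$ a linear subspace of dimension $d\ge s>n-3$, so $d\ge n-2$. By Proposition~\ref{propn: S}, $d\le n-1$.

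\textbf{Step 3 (excluding $d=n-1$ and $d=n-2$).} After rotating, write $\psi(x)=\tilde\psi(x')$ with $x'\in\R^{n-d}$. The map $\tilde\psi$ is $0$-homogeneous on $\R^{n-d}$, and it is minimising there: I pass from $\psi$ to $\tilde\psi$ by a cylinder cutoff comparison, since a competitor for $\tilde\psi$ extended in the spine directions with a cutoff costs lower order energy.
\begin{itemize}
\item If $n-d=1$, then $\tilde\psi$ is constant on each ray $\{x'>0\}$ and $\{x'<0\}$ and has finite energy, so it is constant. Hence $\psi$ is constant, which is a contradiction.
\item If $n-d=2$, then $\tilde\psi$ is a minimising map on $\R^2$. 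By Theorem~\ref{thm: n-2 measure of singular set is zero} with $n=2$, its singular set has $\hau^0$-measure zero, so it is empty. Thus $\tilde\psi$ is smooth at $0$. A smooth $0$-homogeneous map is constant, which again contradicts $\sing\psi=\bigs(\psi)\neq\emptyset$.
\end{itemize}
Therefore $\hau^s(\singu)=0$ for every $s>n-3$, and $\dim\singu\le n-3$.

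The main obstacle is Step 2. It requires carefully justifying that positive $\hau^s$ measure is inherited by blow-ups: $\hau^s_\infty$ is upper semicontinuous along Hausdorff converging compact sets, and Step 1 supplies the needed Hausdorff containment. It also requires showing that the spine strictly grows at each iteration. I will follow Simon's version of Federer's reduction argument here. The reduction to $\tilde\psi$ minimising in Step 3 is a routine cutoff estimate.
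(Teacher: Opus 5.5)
Your proposal is correct and has the same architecture as the paper's proof. Both use Federer's dimension reduction: blowing up a tangent map $\phi$ at a singular point $z\notin\bigs(\phi)$ keeps $\bigs(\phi)$ in the spine and adds $\R z$. The new $\R z$-invariance is the paper's Claim, and it follows from the $0$-homogeneity of $\phi$ alone, not from Lemma~\ref{lem: maximisation of density for tgt map}. One iterates until a nonconstant minimising tangent map with spine of dimension $\ge n-2$ appears, and then excludes it. The two supporting steps differ.

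Your Step~2 supplies what the paper's sketch leaves implicit. The paper blows up at arbitrary points $x_0\in\singu$ and $x_{i+1}\in\sing(\phi_i)\setminus\bigs(\phi_i)$, and asserts that the chain ends at an $(n-2)$-symmetric map. But the hypothesis $\dim\singu>n-3$ only does any work once these points are chosen at positive upper $\hau^s$-density and $\hau^s(\sing\phi_i)>0$ is carried through each blow-up. You do this via Step~1 and the upper semicontinuity of $\hau^s_\infty$. You should restate your stopping rule as ``iterate while $\dim\bigs(\phi)<s$''. Only then is $\hau^s(\bigs(\phi))=0$, which guarantees a density point off the spine. Once $\dim\bigs(\phi)\ge n-2$, go directly to Step~3, whether or not $\sing\phi=\bigs(\phi)$.

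In Step~3 the paper is more economical. Writing $\tilde\psi(r,\theta)=a(\theta)$ on $\R^2$, one has $\int_{\ball{R}{0}}|\na\tilde\psi|^2\,\dd x=\int_0^{2\pi}|a'|^2\,\dd\theta\int_0^R r^{-1}\,\dd r$. This is finite only if $a$ is constant, and its finiteness follows from $\psi\in W^{1,2}_{\loc}$ by Fubini. No minimality of $\tilde\psi$ is needed at all. So you can drop your cylinder comparison, which takes some care to keep competitors $\N$-valued, and the appeal to two-dimensional $\e$-regularity. If you keep your route, note that stationarity of $\tilde\psi$, obtained by testing the identities for $\psi$ with fields cut off in the spine variables, already suffices for Theorem~\ref{thm: n-2 measure of singular set is zero}. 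Your case $n-d=1$ is also subsumed, since an $(n-1)$-symmetric map is $(n-2)$-symmetric.
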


\begin{tcolorbox}[
  boxrule=0.5pt,
  colback=white,
  colframe=blue,
  left=1em,
  right=1em,
]
\begin{center}
One major conjecture in the study of harmonic maps is that Theorem~\ref{thm: codimenion 3} remains valid for stationary harmonic maps.
\end{center}
\end{tcolorbox}

\begin{conjecture}\label{conjecture}
Let $u: \Omega \subset \R^n \to \N$ be a \underline{stationary} harmonic map. Then the singular set of $u$ is of at least codimension 3.  
\end{conjecture}

We also bring to the reader's attention the recent breakthrough by Naber--Valtorta~\cite{nv}. 
\begin{theorem}
    Let $u: \Omega \subset \R^n \to \N$ be a minimising harmonic map. Then  $\singu$ is $(n-3)$-rectifiable with locally uniformly finite $(n-3)$-dimensional Hausdorff measure. 
\end{theorem}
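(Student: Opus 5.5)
The plan is to follow the strategy of Naber--Valtorta~\cite{nv}: a quantitative stratification driven by the monotonicity identity~\eqref{monotonicity identity}, combined with rectifiable Reifenberg theorems from geometric measure theory. Throughout we fix a ball $\ball{2}{0}\Subset\Omega$, normalise so that $\E_{\ball{2}{0}}[u]\le\Lambda$, and aim for the bound $\hau^{n-3}(\singu\cap\ball{1}{0})\le C(n,\N,\Lambda)$ together with $(n-3)$-rectifiability.

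\textbf{Step 1: reduction to a quantitative stratum.} Say that $u$ is \emph{$(k,\e)$-symmetric on $\ball{r}{x}$} if
\[
r^{-n}\int_{\ball{r}{x}}|u-\phi(\cdot-x)|^2\,\dd z<\e,
\]
where $\phi$ is some $0$-homogeneous map invariant under a $k$-dimensional subspace, as in Proposition~\ref{propn: S}. Define $\bigs^k_\e$ to be the set of points $x$ such that $u$ is \emph{not} $(k+1,\e)$-symmetric on $\ball{r}{x}$ for any $r\in]0,1]$.

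The first claim is that $\singu\cap\ball{1}{0}\subset\bigs^{n-3}_\e$ for some $\e=\e(n,\N,\Lambda)>0$. I would argue by contradiction. Take points $x_j\in\singu$ and radii $r_j$ at which $u$ is $(n-2,1/j)$-symmetric. Rescale and apply the Compactness Theorem~\ref{thm: cptness}. The limit is a minimising, $0$-homogeneous map that is $(n-2)$-invariant, i.e.\ a minimising homogeneous map $\R^2\to\N$, and such a map is constant. Strong $W^{1,2}$ convergence then makes the normalised energy of the rescalings small. By Corollary~\ref{cor: eps-reg}, $x_j$ is then regular, a contradiction. This is where minimality, as opposed to mere stationarity, is used.

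\textbf{Step 2: the $L^2$ best-approximation estimate.} Consider the pinching $W_r(x):=\Theta(x,r)-\Theta(x,r/8)\ge0$, where $\Theta(x,r)=r^{2-n}\int_{\ball{r}{x}}|\na u|^2$. Let $\mu$ be any finite measure supported in $\ball{r}{x}$, and define the Jones number
\[
\beta^k_{\mu,2}(x,r)^2=\inf_{L^k}\, r^{-k-2}\int {\rm dist}(z,L)^2\,\dd\mu(z).
\]
The key estimate to prove is: if $u$ is not $(k+1,\e)$-symmetric on $\ball{2r}{x}$, then
\[
\beta^k_{\mu,2}(x,r)^2\le C\,r^{-k}\int W_{r}(z)\,\dd\mu(z).
\]
To prove it I would diagonalise the second moment matrix of $\mu$, with eigenvalues $\lambda_1\ge\dots\ge\lambda_n$ and eigenvectors $v_i$. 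Testing the identity $\frac{x-z}{|x-z|}\cdot\na u$ from~\eqref{monotonicity identity} against $\mu$ then gives
\[
\lambda_i\int_{A}|v_i\cdot\na u|^2\le C\int W_r\,\dd\mu
\]
on an annulus $A$. Since $\beta^2\sim r^{-k-2}\sum_{i>k}\lambda_i$, it suffices to bound $\lambda_{k+1}$. I would do this with a compactness lower bound: non-$(k+1)$-symmetry forces $\sum_{i\le k+1}\int|v_i\cdot\na u|^2\ge c(\e)r^{n-2}$.

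\textbf{Step 3: Reifenberg theorems and covering.} Next I would invoke the discrete and rectifiable Reifenberg theorems of~\cite{nv}. They state that if
\[
\sum_{s\le r\text{ dyadic}}\int_{\ball{r}{x}}\beta^k_{\mu,2}(z,s)^2\,\dd\mu(z)\le\delta^2 r^k
\]
for all balls, then $\mu(\ball{r}{x})\le C r^k$, and the support of $\mu$ is rectifiable if the sum is finite $\mu$-a.e.

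Summing Step 2 over scales, the $W$'s telescope and are bounded by $\Lambda$, because the density is monotone and bounded. This controls the $\beta$-sums, except at scales where the density drops by a definite amount. Such drops can occur only $C(\Lambda)$ times. I would organise this through an inductive covering into ``good'' balls, where pinching is small on most points, and ``bad'' balls, where the energy drops by $\eta$. The bad balls get a packing estimate $\sum r_i^{n-3}\le C$. Iterating at most $\Lambda/\eta$ times yields the measure bound for $\bigs^{n-3}_\e$, and rectifiability follows because $\sum_s W_s(x)<\infty$ at every point.

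\textbf{Main obstacle.} I expect the main obstacle to be Step 2 together with its use in the covering. One must show that the measure being packed satisfies the small-$\beta$ hypothesis uniformly, even though the Reifenberg theorem is needed to bound the very measure that appears on the right-hand side. My first attempt would be the Naber--Valtorta induction on scales: assume the packing bound holds at all smaller scales, then propagate it to the current scale.
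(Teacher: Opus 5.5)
Your outline follows the Naber--Valtorta route, and that is all the paper offers here: the notes give no proof of this theorem, only the citation of \cite{nv} and a list of its ingredients in \S\ref{sec: final}. Step~1 is correct. Step~2, however, fails as stated.

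\textbf{The lower bound in Step~2.} The compactness bound $\sum_{i\le k+1}\int_A|v_i\cdot\na u|^2\ge c(\e)r^{n-2}$ is false without almost-homogeneity at the centre. Take $u(x)=\gamma(x_n)$ with $\gamma$ a geodesic of $\N$. It is invariant along $\{x_n=0\}$, so the left side vanishes whenever $\mu$ is spread over a $(k+1)$-disc in a hyperplane $\{x_n={\rm const}\}$. Yet for a long enough geodesic segment, $u$ is not $(k+1,\e)$-symmetric on $\ball{2r}{x}$.

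\textbf{The estimate itself, for $k\le n-4$.} Let $h$ be the minimising cone $x''/|x''|$ into $\stwo$ on $\R^{n-3}\times\R^3$, translated so that its spine $y_0+V$ lies at distance $r/2$ from $x$. Let $\mu=\hau^{n-3}\mres\big((y_0+V)\cap\ball{r}{x}\big)$. Since $h$ is $0$-homogeneous about each point of $y_0+V$, we have $W\equiv0$ on $\spt{\mu}$ at every scale. On the other hand $\beta^{n-4}_{\mu,2}(x,r)>0$. Moreover $h$ is not $(n-3,\e)$-symmetric on $\ball{2r}{x}$ for small $\e$, because it is not $0$-homogeneous about $x$. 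So your inequality fails.

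\textbf{How to repair Step~2.} Your compactness argument yields a $(k+1)$-symmetric limit only if that limit is also $0$-homogeneous about the centre. You must therefore add the hypothesis that $\ball{8r}{x}$ is $(0,\delta)$-symmetric, i.e.\ small pinching at $x$; this is the extra hypothesis in the $L^2$-approximation theorem of \cite{nv}. The scales also need correcting. To integrate $\lambda_i|v_i\cdot\na u(y)|^2\le\int|(y-z)\cdot\na u(y)|^2\,\dd\mu(z)$ over one annulus $A$, you need $A$ to lie in the range of radii controlled by $W(z)$ for every $z\in\ball{r}{x}$. No such $A$ exists for $\Theta(z,r)-\Theta(z,r/8)$. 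The choice $\Theta(z,8r)-\Theta(z,r)$ with $A=\ball{4r}{x}\setminus\ball{3r}{x}$ works.

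\textbf{The missing idea in Step~3.} The missing hypothesis must be supplied by the covering, and this is where the real idea is absent. An energy drop by $\eta$ on a ball gives no bound on how many such balls are needed. Nothing in your covering guarantees that the centres where you apply Step~2 are almost-homogeneous points. In \cite{nv} both issues are handled by quantitative cone-splitting: small pinching at points that effectively span an affine $j$-plane forces almost $j$-symmetry. This separates two kinds of balls.
\begin{enumerate}
\item In the first kind, the points whose density is within $\eta$ of the top value effectively span $k$ dimensions. There $u$ is almost $k$-symmetric along some plane $L^k$. The relevant part of $\bigs^k_\e$ stays near $L^k$, since its points are never almost $(k+1)$-symmetric. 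The corrected Step~2 and the discrete Reifenberg theorem then run inside the induction on scales.
\item In the second kind, those points lie near a $(k-1)$-plane. The covering then gains a factor in $k$-dimensional content, and the density has dropped off that plane.
\end{enumerate}

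\textbf{A smaller point.} The Reifenberg bound $\mu(\ball{r}{x})\le Cr^k$ holds only for normalised measures, such as $\sum_j\omega_kr_j^k\delta_{x_j}$ over disjoint balls or $\hau^k\mres S$. It does not hold for arbitrary $\mu$: a large multiple of $\hau^k\mres L^k$ has all $\beta$-numbers zero.
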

The outline of the proof for this result is the theme of the minicourse in the same Winter School at Westlake University taught by Prof.~D. Valtorta. Prior to \cite{nv}, the most general result had been the following, established by Simon~\cite{simon''}: 
\begin{quote}
If $u:\Omega\subset\R^n\to \N$ is energy-minimising with $\N$ compact and real-analytic, then for each closed ball $\ball{\rho}{y}\Subset\Omega$, $\ball{\rho}{y}\cap\singu$ is the union of a finite pairwise disjoint collection of locally $(n-3)$-rectifiable locally compact sets.  
\end{quote}

The starting point of the proof of Theorem~\ref{thm: codimenion 3} is  the analysis of the spine $\bigs(\phi)$ for a tangent map $\phi$ (see \eqref{def: S} and Proposition~\ref{propn: S}) of minimising harmonic maps. We shall take a different approach from Simon's notes~\cite{simon}, in which the stronger result $\dim \bigs_j \leq j$ for all $j$ by Schoen--Uhlenbeck~\cite{su1} (see Definition~\ref{Sj, def} below) is proved. Instead, we present the celebrated arguments in~\cite{federer}, now known as \emph{Federer's dimension reduction trick}, plus the simple calculus fact that any $0$-homogeneous map on $\R^2$ with finite Dirichlet energy is constant.

To illustrate Federer's ideas, we first introduce:
\begin{definition}\label{def: symmetry}
Let $V^k \subset \R^n$ be a linear subspace. A function $\psi: \R^n \to \N$ is symmetric with respect to $V^k$ if $\psi(x)=\psi(x+y)$ for all $y \in V^k$. We say that $\psi$ is $k$-symmetric if it is degree-$0$-homogeneous and symmetric with respect to some $V^k$. 
\end{definition}
By Proposition~\ref{propn: S}, for a tangent map $\phi: \R^n \to \N$:

\begin{tcolorbox}[
  boxrule=0.5pt,
  colback=white,
  colframe=blue,
  left=1em,
  right=1em,
]
\begin{center}
The maximal subspace with respect to which $\phi$ is symmetric is the spine $\bigs(\phi)$.
\end{center}
\end{tcolorbox}
In other words, $\phi$ is $k$-symmetric and not $(k+1)$-symmetric if and only if $\dim\bigs(\phi)=k$.

\begin{proof}[Proof of Theorem~\ref{thm: codimenion 3}]

We first observe the following iterative construction. Roughly speaking, each time we take the tangent map, we gain an extra symmetry. 

\noindent
{\bf Claim:} Let $\phi: \R^n \to \N$ be a tangent map of some minimising map $u: \Omega \subset \R^n \to \N$. If $V \subset \bigs(\phi)$ is a subspace and $y \notin V$, then $V\oplus \langle y \rangle \subset \bigs(\psi)$ for some tangent map $\psi$ of $\phi$ at $y$.

\begin{proof}[Proof of the claim]
Let $\phi_{y, \rho_j} \to \psi$ strongly in $W^{1,2}$. For any $z \in \R^n$ and $t \in \R$, it holds that
    \begin{align*}
        \phi (y + \rho_j(ty+z)) = \phi\left(y + \frac{\rho_j}{1+t\rho_j}z\right)
    \end{align*}
by the $0$-homogeneity of $\phi$. But $\frac{\rho_j}{1+t\rho_j} = \rho_j + \mathfrak{o}(\rho_j)$, so by selecting an unlabelled common subsequence, we have $\psi(z+ty) = \psi(z)$.     \end{proof}

Now, suppose for contradiction that $\dim\singu > n-3$. Then we could select $x_0 \in \singu$ and a tangent map $\phi_0$ of $u$ at $x_0$. If $\sing(\phi_0) = \bigs(\phi_0)$ we terminate here, and if not, we further $x_1 \in \sing(\phi_0) \setminus \bigs(\phi_0)$ and blow up $\phi_0$ at $x_1$ to obtain a tangent map $\phi_1$. By the \emph{claim} above, we have $\bigs(\phi_0) \oplus \langle x_1 \rangle \subset \bigs(\phi_1) \subset \sing(\phi_1)$.  Again, if $\sing(\phi_1) = \bigs(\phi_0)$ we terminate here, and if not, we take $x_2 \in \sing(\phi_1) \setminus \bigs(\phi_1)$ and blow up $\phi_1$ there...

In this way, we get a chain $(\phi_0, \phi_1, \phi_2, \ldots, \phi_k)$, where $k \leq n-2$ and $\phi_k$ is $(n-2)$-symmetric. In particular, $\phi_k$ depends only on two variables: there exists some coordinate system $(x',x'') \in \R^{n-2} \times \R^2$ such that $\phi_k(x',x'')=v(x'')$ for some $v$. But $v:\R^2 \to \N$ is also energy minimising and $0$-homogeneous, $v(x'')=v(r,\theta) = a(\theta)$ for some function $a$ in one angular variable only. Hence,
\begin{align*}
    \int_{\ball{R}{0}}|\na\phi_k|^2\,\dd x &= \int_0^{2\pi}\int_0^R \frac{|a'(\theta)|^2}{r^2}r\,\dd r\,\dd\theta,
\end{align*}
which is finite only when $\phi_{k}$ is constant. This is impossible since $\sing(\phi_k)$ contains the spine of $\phi_k$, which is constructed by adding singular points.  \end{proof}

The above proof fails completely for stationary harmonic maps, since we do not have strong $W^{1,2}$-convergence. In particular, the kinetic energy may drop each time we take a tangent map. Therefore, a natural programme proposed to resolve Conjecture~\ref{conjecture} for stationary harmonic maps is to keep track of the energy drops. A landmark in this direction is the following ``energy quantisation'' theorem by Lin--Rivi\`{e}re~\cite{lr}: At almost every point of an \((n-2)\)-dimensional concentration set, all such energy drops are accounted for by finitely many $2$-dimensional harmonic-sphere bubbles; no positive energy remains hidden in the neck regions, \textit{i.e.}, the annular regions between successive bubbling scales. See \S\ref{sec: final} for further discussion.

\subsection{Stratification of singular sets}
We introduce the following stratification of $\singu$:
\begin{definition}\label{Sj, def}
Let $u: \Omega \subset \R^n \to \N$ be a harmonic map. For $j \in \{0,1,\ldots, n-1\}$, define
\begin{equation*}
    S_k \equiv S_k(u):=\Big\{y \in \singu:\,\dim\bigs(\phi) \leq k \text{ for $\underline{all}$ tangent maps $\phi$ of $u$ at $y$} \Big\},
\end{equation*}
where $\bigs(\phi)$ is the spine of $\phi$ as in \eqref{def: S}.
\end{definition}

Equivalently, we may express
\begin{equation*}
    S_k =\Big\{y \in \singu: \text{any tangent map $\phi$ of $u$ at $y$ is not $(k+1)$-symmetric} \Big\}.
\end{equation*}

For a minimising harmonic map $u$, it follows from Theorem~\ref{thm: codimenion 3} that \begin{align}\label{chain}
    S_0 \subset S_1 \subset \cdots \subset S_{n-3} = S_{n-2} = S_{n-1} = \singu.
\end{align}
Indeed, this can be proved without the dimension estimate for $\singu$: $S_j \subset S_{j+1}$ by construction, and the ``furthermore'' part of Proposition~\ref{propn: S} shows that $S_{n-1}=\singu$ whenever $u$ is minimising. Suppose for contradiction that $S_{n-3} \neq \singu$. Then, by the definition of $\bigs_j$, there would be a tangent map $\phi$ such that $\dim\bigs(\phi) = n-1$ or $n-2$. However, by Proposition~\ref{propn: S} we know that $\bigs(\phi)$ is a linear subspace of $\R^n$ and $\bigs(\phi) \subset \sing(\phi)$; in addition, by the compactness Theorem~\ref{thm: cptness} we know that $\phi:\R^n\to\N$ is a minimising harmonic map.  Thus $\hau^{n-2}(\sing(\phi))=\infty$, which immediately contradicts Theorem~\ref{thm: n-2 measure of singular set is zero}.

The lowest dimensional part 
\begin{align*}
    S_0 = \Big\{y \in \singu:\, \text{all tangent maps of $u$ at $y$ are degree-$0$-homogeneous but not $1$-symmetric}\Big\}
\end{align*}
has the following characterisation:
\begin{proposition}\label{propn: S0 discrete}
    For each $\alpha >0$, the set $S_0 \cap \{\Theta_u=\alpha\}$ is discrete.
\end{proposition}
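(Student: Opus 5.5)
The plan is a contradiction-by-blow-up argument in the spirit of the Claim in the proof of Theorem~\ref{thm: codimenion 3}. We work with $u$ minimising, as in the surrounding discussion, so that the Compactness Theorem~\ref{thm: cptness} and the density identity \eqref{two Theta} of Lemma~\ref{lem: basic properties of harmonic maps} are available. Suppose $S_0\cap\{\Theta_u=\alpha\}$ fails to be discrete. Then there are $y\in\Omega$ and points $y_j\in S_0\cap\{\Theta_u=\alpha\}$ with $y_j\neq y$ and $y_j\to y$. We want to find a tangent map $\phi$ of $u$ at $y$ with $\dim\bigs(\phi)\geq 1$, which would contradict the definition of $S_0$. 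For this we must first make sure $y$ itself lies in $S_0$ (or at least in $\singu$). By upper semicontinuity of $\Theta_u$ we get $\Theta_u(y)\geq\alpha>0$, so $y\in\singu$ by Corollary~\ref{cor: eps-reg}.

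\textbf{Step 1 (blow-up at the accumulation point).} Set $\rho_j:=|y_j-y|\to 0^+$ and $\xi_j:=(y_j-y)/\rho_j\in\p\bone$. Passing to a subsequence, Lemma~\ref{lem: convergence to tangent map} gives $u_{y,\rho_j}\to\phi$ strongly in $W^{1,2}_\loc$, with $\phi$ a minimising tangent map at $y$. We may also assume $\xi_j\to\xi$ with $|\xi|=1$. Since $u$ is minimising, \eqref{two Theta} yields $\Theta_\phi(0)=\Theta_u(y)$.

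\textbf{Step 2 (density at $\xi$).} Next we show $\Theta_\phi(\xi)\geq\alpha$. Scaling gives $\Theta_{u_{y,\rho_j}}(\xi_j)=\Theta_u(y_j)=\alpha$. Monotonicity then gives, for each fixed $r>0$,
\[
r^{2-n}\int_{\ball{r}{\xi_j}}|\na u_{y,\rho_j}|^2\,\dd x\geq\alpha .
\]
Strong $W^{1,2}_\loc$ convergence, together with $\xi_j\to\xi$, lets us pass to the limit (using slightly larger balls around $\xi$ and then letting the enlargement tend to zero). This gives $r^{2-n}\int_{\ball{r}{\xi}}|\na\phi|^2\geq\alpha$ for every $r$, and hence $\Theta_\phi(\xi)\geq\alpha$. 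This is exactly the point where strong convergence, and so minimality, is used.

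\textbf{Step 3 (comparing densities).} If we knew $\Theta_u(y)=\alpha$, then Lemma~\ref{lem: maximisation of density for tgt map} would give $\alpha\leq\Theta_\phi(\xi)\leq\Theta_\phi(0)=\alpha$. So $\xi\in\bigs(\phi)$, and $\dim\bigs(\phi)\geq1$ since $\xi\neq0$. Proposition~\ref{propn: S} then shows $\bigs(\phi)$ is a nontrivial subspace, so $y\notin S_0$.

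This step is the main obstacle, because in general we only know $\Theta_u(y)\geq\alpha$. I would handle it by reading ``discrete'' as ``without accumulation points in the set itself''. That is, we take the accumulation point $y\in S_0\cap\{\Theta_u=\alpha\}$, so $\Theta_u(y)=\alpha$ and $y\in S_0$ hold by assumption. This is the standard meaning of a discrete subset, and with it the argument above gives the contradiction $y\notin S_0$ directly.

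If a stronger local-finiteness statement were wanted, I would try to upgrade the argument to accumulation points $y$ with $\Theta_u(y)>\alpha$. This seems genuinely false in general: points of $S_0$ of one density can accumulate on a higher-density point. So I would state and prove the proposition in the ``isolated points of the set'' sense, noting that only Steps 1--3 are needed.
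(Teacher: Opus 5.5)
Your proof is correct and is essentially the paper's own argument. You blow up at the accumulation point $y$ at scales $|y_j-y|$, and use strong convergence with upper semicontinuity of the density to get $\Theta_\phi(\xi)\geq\alpha=\Theta_\phi(0)$ for the unit vector $\xi=\lim\xi_j$. Lemma~\ref{lem: maximisation of density for tgt map} then puts $\xi$ in the linear subspace $\bigs(\phi)$ (Proposition~\ref{propn: S}), contradicting $y\in S_0$.

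Your explicit reading of ``discrete'' (no point of the set is a limit of other points of the set) is exactly what the paper assumes tacitly when it writes $\Theta_\phi(0)=\Theta_u(y)=\alpha$ and uses $y\in S_0$. The one thing to change is your closing claim that the stronger local-finiteness statement ``seems genuinely false'': it is unsupported. State it only as something this argument does not give, since upper semicontinuity yields merely $\Theta_u(y)\geq\alpha$.
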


\begin{proof}[Proof of Proposition~\ref{propn: S0 discrete}] 
The idea of the proof goes back to De Giorgi~\cite{dg}. See also Almgren~\cite{a}.

Suppose for contradiction that there exists a positive number $\alpha$ and a sequence of points $\{y_j\} \subset S_0 \cap \{\Theta_u=\alpha\}$ such that $y_j \to y$ (with $y_j \neq y$ for each $j$). Consider the blow-up sequence $u_j \equiv u_{y, |y_j-y|}$: by  the compactness Theorem~\ref{thm: cptness}, there exists a subsequence of $\{u_{j}\}$ (not relabelled) that converges strongly in $W^{1,2}$ to a tangent map $\phi: \R^n\to\N$, which is also a minimising harmonic map. Note that $\Theta_\phi(0) = \Theta_u(y)=\alpha$ by Lemma~\ref{lem: basic properties of harmonic maps}.

Now, consider $\xi_j := \frac{y_j-y}{|y_j-y|} \in {\bf S}^{n-1}$. By passing to an unrelabelled subsequence, we have $\xi_j \to \xi \in {\bf S}^{n-1}$. A change of variables yields that $\Theta_{u_j}(\xi_j)=\Theta_{u}(y_j) = \alpha$ for each $j$. Thanks to the upper-semicontinuity of density ($\Theta_{w}(z) \geq \limsup_{j\to\infty}\Theta_{w_j}(z_j)$ if $z_j \to z$ and $w_j \to w$ strongly in $W^{1,2}_\loc$), we thus have $\Theta_\phi(\xi) \geq \alpha=\Theta_\phi(0)$. By Lemma~\ref{lem: maximisation of density for tgt map}, $\Theta_\phi(\xi) = \alpha=\Theta_\phi(0)$ and hence $\xi \in \bigs(\phi)$. However, $y \in S_0$ means that $\dim\bigs(\phi)=0$ for all tangent maps $\phi$ of $u$ at $y$. This is absurd since $|\xi|=1$.   \end{proof}

The following theorem holds for the (non-quantitative) stratification of singular sets:
 
\begin{theorem}[Schoen--Uhlenbeck~\cite{su1}; Almgren~\cite{a}]\label{thm: stratification}
Let $u$ be a minimising harmonic map. Then $\dim S_k(u) \leq k$ for all $k$.      
\end{theorem}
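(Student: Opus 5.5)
We argue by contradiction, following Federer's dimension reduction in the form already used in Proposition~\ref{propn: S0 discrete}. Suppose $\hau^{s}(S_k)>0$ for some $s>k$. First we reduce to a set of fixed density. Since $\Theta_u$ is upper semicontinuous and takes values in a bounded interval on compact subsets, we write $S_k=\bigcup_q S_k\cap\{\Theta_u\in[q,q+\eta)\}$ over a countable family, and obtain a piece $A\subset S_k$ with $\hau^s_\infty(A)>0$. A cleaner variant, à la Almgren, is to work with the sets
\[
S_{k,\eta}=\Big\{y:\ \rho^{2-n}\!\int_{\ball{\rho}{y}}|\na u|^2\,\dd x\le \Theta_u(y)+\eta \text{ for }\rho<\eta\Big\},
\]
whose union over $\eta$ exhausts any set. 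By the standard density theorem for $\hau^s_\infty$ (the upper $s$-density is positive $\hau^s$-a.e. on a set of positive measure), we pick $y\in A$ and radii $\rho_j\to0$ with
\[
\hau^s_\infty\big(A\cap\ball{\rho_j}{y}\big)\ge c\,\rho_j^s,\qquad c=c(s)>0 .
\]

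Next we blow up at $y$ along $\rho_j$. By Lemma~\ref{lem: convergence to tangent map} and the compactness Theorem~\ref{thm: cptness}, a subsequence $u_j=u_{y,\rho_j}$ converges strongly in $W^{1,2}_\loc$ to a minimising tangent map $\phi$ with $\Theta_\phi(0)=\Theta_u(y)$ (Lemma~\ref{lem: basic properties of harmonic maps}). The rescaled sets $A_j=(A-y)/\rho_j\cap\bone$ satisfy $\hau^s_\infty(A_j)\ge c$.

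The key step is to show that $\hau^s_\infty$-most of $A_j$ accumulates on $\bigs(\phi)$. Let $K$ be the Hausdorff (sub)limit of the compact sets $\overline{A_j}$. Upper semicontinuity of $\hau^s_\infty$ under Hausdorff convergence of compact sets gives $\hau^s_\infty(K)\ge c>0$. For $\xi\in K$ with $\xi=\lim \xi_j$, $\xi_j\in A_j$, we combine the density pinching, namely that points of $A$ have density within $\eta$ of $\Theta_u(y)$ and lie in $S_{k,\eta}$, with upper semicontinuity of density under strong convergence (as in Proposition~\ref{propn: S0 discrete}). This yields $\Theta_\phi(\xi)\ge\Theta_\phi(0)-\eta$, and letting $\eta\to0$ along a diagonal yields $\Theta_\phi(\xi)=\Theta_\phi(0)$. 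By Lemma~\ref{lem: maximisation of density for tgt map} and Proposition~\ref{propn: S}, $\xi\in\bigs(\phi)$. Hence $K\subset\bigs(\phi)$, so $\hau^s(\bigs(\phi))>0$ with $s>k$. Since $\bigs(\phi)$ is a linear subspace, $\dim\bigs(\phi)\ge k+1$, contradicting $y\in S_k$, which requires every tangent map at $y$ to satisfy $\dim\bigs(\phi)\le k$.

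The main obstacle is the density-pinching step. Upper semicontinuity alone gives only $\Theta_\phi(\xi)\ge\limsup\Theta_u(y_j)$, so we need the points of $A$ near $y$ to have density at least (almost) $\Theta_u(y)$. This is exactly why we first restrict to a level-set piece on which $\Theta_u$ varies by less than $\eta$, and choose $y$ to be a density point of that piece; the sets $\{\Theta_u\ge\alpha\}$ are closed, which makes this reduction legitimate. A secondary technical point is the Hausdorff-limit semicontinuity of $\hau^s_\infty$, which we take as a standard measure-theoretic fact.
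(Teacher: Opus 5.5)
Your strategy is sound: blow up at an $\hau^s_\infty$-density point, take a Hausdorff limit $K$ of the rescaled sets, use upper semicontinuity of $\hau^s_\infty$, and force $K\subset\bigs(\phi)$. However, the step ``letting $\eta\to0$ along a diagonal'' does not work as you have set things up, and it is the step that carries the argument.

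Your order of choices is: $\eta$, then the slab $A$, then $y$, then $\{\rho_j\}$, $\phi$ and $K$. So $\phi=\phi_\eta$ depends on $\eta$. For fixed $\eta$ you only know $K\subset\{\Theta_{\phi_\eta}\ge\Theta_{\phi_\eta}(0)-\eta\}$, which gives no contradiction. You cannot send $\eta\to0$ with $\phi$ fixed, because $y$ was chosen as a density point of one slab only. Passing to a limit of the $\phi_\eta$ does not help either: it produces a limit of tangent maps at different points $y_\eta$, whose spine is not controlled by the hypothesis $y_\eta\in S_k$.

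The missing idea is to make $y$ independent of $\eta$.
\begin{enumerate}
\item Use nested slabs $A_{m,q}:=S_k\cap\{q2^{-m}\le\Theta_u<(q+1)2^{-m}\}$ with $m\in\mathbb{N}$, $q\in\mathbb{Z}$.
\item For each of these countably many sets, the points $x\in A_{m,q}$ with $\limsup_{\rho\to0}\rho^{-s}\hau^s_\infty(A_{m,q}\cap\ball{\rho}{x})<c(s)$ form an $\hau^s$-null set. Since $\hau^s(S_k)>0$, you can pick $y\in S_k$ that is such a density point of its slab $A_m\ni y$ for \emph{every} $m$.
\item Choose $\rho_m\downarrow0$ with $\hau^s_\infty(A_m\cap\ball{\rho_m}{y})\ge c\rho_m^s$. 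Blow up along this single sequence, and let $K$ be a Hausdorff sublimit of the closures of $(A_m\cap\ball{\rho_m}{y}-y)/\rho_m$.
\item Every $\xi\in K$ is a limit of points $\xi_m$ of the (unclosed) rescaled sets, and these satisfy $\Theta_{u_{y,\rho_m}}(\xi_m)>\Theta_u(y)-2^{-m}$. Upper semicontinuity then gives $\Theta_\phi(\xi)\ge\Theta_\phi(0)$ outright, and the rest of your argument goes through.
\end{enumerate}
Separately, the sets $S_{k,\eta}$ of your ``cleaner variant'' play no role: upper semicontinuity of the density with moving centres uses only monotonicity.

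Once repaired, this is a genuinely different route from the paper's. The paper fixes $y\in S_k$ and $\delta$ first. It proves the one-sided flatness statement ($\clubsuit$) by contradiction and compactness, with the threshold $\e(u,y,\delta)$ chosen \emph{after} $y$. It then absorbs the $y$-dependence by splitting $S_k$ into the countably many pieces $S_{k,j,q}$, on each of which ($\spadesuit$) holds uniformly. A purely geometric covering argument (Lemma~\ref{lem: tech}) finishes. So the paper never needs a simultaneous density point: its decomposition plays the role of your nested slabs.

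Your version replaces the iterative covering by the density theorem for $\hau^s_\infty$ together with upper semicontinuity of $\hau^s_\infty$ under Hausdorff convergence. It yields $\hau^s(S_k)=0$ for every $s>k$ directly, with no modulus $\omega(\delta)$. The paper's version instead separates the PDE input from the measure-theoretic input. It also gives flatness at every small scale around every point of each piece, which is the form that feeds into the quantitative theory of \S\ref{sec: final}.
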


\begin{proof}[Proof of Theorem~\ref{thm: stratification}]

Consider the following

\noindent
{\bf Claim.} Given each $\delta>0$ and $y \in S_k(u)$, there exists $\e=\e(u,y,\delta)>0$ such that at \underline{any} scale $\rho \in ]0,\e]$, one can find a $k$-dimensional linear subspace $L=L(y,\rho) \subset \R^n$ such that 
\begin{equation*}
    \eta_{y,\rho} \bigg(\Big\{ x \in \ball{\rho}{y}:\, \Theta_u(x) \geq \Theta_u(y) -\e \Big\}\bigg) \subset L + \ball{\delta}{0},\tag{$\clubsuit$}
\end{equation*}
where $\eta_{y,\rho}$ is the ``blow-down'' of scale $\rho$ at centre $y$:
\begin{align*}
    \eta_{y,\rho}(x):= \frac{x-y}{\rho}.
\end{align*}
In other words, the blow-down image of the set of points in $\ball{\rho}{y}$ whose density drop is no more than $\e$ is contained in the $\delta$-neighbourhood of some $k$-plane $L$.

Two remarks are in order: 
\begin{itemize}
    \item 
First, even if one fixes $y$, the $k$-plane $L=L(y,\rho)$ may still change as $\rho$ varies. This reflects the fact that tangent maps at a given point may be non-unique when passing to the blow-up limits along different subsequences. 
    \item 
 Second, ($\clubsuit$)  is a one-sided control: it only says that the set on the left-hand side (call it $G$ temporarily) is close to the $k$-plane $L$, but not the other way round. That is, it does not assert that the Hausdorff distance between $G$ and $L$ are small. Note, however, that only the two-sided Hausdorff closeness between $G$ and $L$ on all scales, \textit{i.e.}, the smallness of Jones $\beta$-numbers, would be sufficient to apply the (classical/rectifiable/$W^{1.p}$) Reifenberg theorems; \emph{cf.} \cite{nv}.   
\end{itemize}

Assume the \emph{claim} and fix $\delta>0$. Then we have $$S_k = \bigcup_{j=1}^\infty S_{k,j} = \bigcup_{j=1}^\infty \bigsqcup_{q=1}^\infty S_{k,j,q},$$ where\footnote{For small $j$, it is very likely that $S_{k,j}=\emptyset$.}
\begin{align*}
&    S_{k,j} \equiv S_{k,j}(\rho,\delta) :=  \Big\{y \in S_k:\, \text{($\clubsuit$) holds with $\e = j^{-1}$}\Big\},\\
& S_{k,j,q} \equiv S_{k,j,q}(\rho,\delta) := \Big\{y \in S_{k,j}: \frac{q-1}{j} <\Theta_u(y) \leq \frac{q}{j} \,\Big\}.
\end{align*}
Since the oscillation of density $\Theta_u$ is less than $j^{-1}$ on each $S_{k,j,q}$, by  the \emph{claim}~($\clubsuit$), for each $\rho \leq j^{-1}$ and $y \in S_{k,j,q}$ there exists a $k$-plane $L=L(y,\rho)$ such that 
\begin{align}\label{approx}
\eta_{y,\rho}\left(S_{k,j,q}\cap\ball{\rho}{y}\right) \subset L+ \ball{\delta}{0}.
\end{align} 
Then, by a covering argument (Lemma~\ref{lem: tech}), \eqref{approx} implies that $\hau^{k+\omega(\delta)}(S_{k,j,q})=0$ for some modulus of continuity $\omega: ]0,\infty[ \to ]0,\infty[$. Thus the same holds for $S_k$, and hence $\dim S_k \leq k$. \end{proof}

\begin{proof}[Proof of the claim~$(\clubsuit)$]

We prove by contradiction. Suppose that there were $\delta>0$, $y \in S_k(u)$, and sequences $\{\rho_\ell\}, \{\e_\ell\} \to 0^+$ with $\rho_\ell \leq \e_\ell$ for each $\ell$, such that for every $\ell \in \mathbb{N}$, it holds that 
\begin{align}\label{a, drop}
    &\Big\{ x \in \ball{1}{0}:\, \Theta_{u_{y,\rho_\ell}}(x) \geq \Theta_{u}(y) - \e_\ell \Big\}\nonumber \\ &\text{ is not contained in the $\delta$-neighbourhood of any $k$-plane $L$.}
\end{align}

 On the other hand, $u_{y, \rho_\ell}$ converges strongly in $W^{1,2}$ to a tangent map $\phi$ of $u$ at $y$, with $\Theta_\phi(0)=\Theta_u(y)$; see Lemma~\ref{lem: basic properties of harmonic maps}, \eqref{two Theta}. Here $y \in S_k$, so by Definition~\ref{Sj, def} of $S_k$, the spine $\bigs(\phi)$ (the set on which $\Theta_\phi$ takes its maximum $\Theta_\phi(0)$) has dimension at most $k$. Thus, away from a $k$-plane containing the spine, there must be a positive definite energy drop. More precisely, there exists a $k$-plane $L_\star \supset \bigs(\phi)$ and a positive number $\alpha$ (independent of $\ell$) such that
 \begin{align}\label{b, drop}
     \Theta_\phi(x) < \Theta_\phi(0) - \alpha\qquad\text{whenever $x \in \overline{\ball{1}{0}}$ satisfies ${\rm dist}(x, L_\star)\geq \delta$}.
 \end{align}

However, \eqref{a, drop} and \eqref{b, drop} are inconsistent: By the strong $W^{1,2}$-(sub)convergence of $u_{y, \rho_\ell}\to\phi$ and the upper-semicontinuity of $\Theta$, we deduce from \eqref{a, drop} (taking $L=L_\star$) that for some $x$ with ${\rm dist}(x, L_\star)\geq \delta$, there is no energy drop ($\Theta_\phi(x) \geq \Theta_\phi(0) - \alpha/100$, say). This contradicts~\eqref{b, drop}.  \end{proof}

Finally, let us present in detail  the covering argument in the proof of Theorem~\ref{thm: stratification}. It asserts that given a subset $E$ of $\R^n$, if for any $\delta>0$ one can put pieces of $E$ on all small scales (up to a scale $\rho_0$ that can vary as $\delta$) in the $\delta$-neighbourhood of a $k$-plane $L$, then $E$ is at most $k$-dimensional. Of course, there cannot be any bound on the $k$-dimensional measure of $E$.

\begin{lemma}\label{lem: tech}
Let $E\subset\R^n$ be a  set with the property:
\begin{align*}
    &\forall \delta>0, \, \exists \rho_0=\rho_0(n,\delta) \text{ such that } \forall \rho \in ]0,\rho_0[,\,\forall y\in E,\\
    &\exists \text{$k$-plane } L=L(y,\rho) \subset \R^n \text{ such that } \eta_{y,\rho}\left(E \cap \ball{\rho}{y}\right)\subset L+\ball{\delta}{0}.\tag{$\spadesuit$}
\end{align*}
Then $\hau^{k+\omega(\delta)}(E)=0$ for some modulus of continuity $\omega$.  \end{lemma}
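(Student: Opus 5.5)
The plan is to run an iterated covering argument. At each step we refine a cover of $E$ by balls of radius $r$ into balls of radius $\lambda r$, for a fixed small ratio $\lambda$, and we count how many children each ball needs. Fix $\delta>0$ small and set $\lambda := 4\delta$; since $\delta$ is arbitrary, we may take $\lambda<1/2$, say. It suffices to prove $\hau^{k+\omega(\delta)}(E\cap \ball{R}{0})=0$ for every $R$. The key estimate is a single-scale covering bound.

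\emph{Single-scale bound.} For every $y\in E$ and $\rho<\rho_0(\delta)$, the set $E\cap\ball{\rho}{y}$ can be covered by at most
\[
N(\delta)\le C(n)\,\lambda^{-k}
\]
balls of radius $\lambda\rho$ centred at points of $E$. By ($\spadesuit$), $E\cap\ball{\rho}{y}$ lies in $y+\rho\,(L+\ball{\delta}{0})\cap\ball{\rho}{y}$, which is a slab around a $k$-disc of radius $\rho$ with thickness $2\delta\rho$. Cover the $k$-disc by $C(n)\lambda^{-k}$ points forming a $(\lambda\rho/2)$-net. The balls of radius $\lambda\rho/2+\delta\rho\le\lambda\rho/2\cdot 2=\lambda\rho$ around these net points cover the slab. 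Then recentre: for each such ball meeting $E$, pick a point of $E$ in it and use the ball of radius $2\lambda\rho$ about that point. This costs only a factor $2$ in the radius, which can be absorbed by renaming $\lambda$. This ratio bookkeeping is the only delicate point, and it is harmless.

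\emph{Iteration.} Start from a cover of $E\cap\ball{R}{0}$ by $M=M(n,R,\rho_1)$ balls of radius $\rho_1<\rho_0$ centred in $E$. After $m$ generations of applying the single-scale bound, $E\cap\ball{R}{0}$ is covered by at most $M\,(C(n)\lambda^{-k})^m$ balls of radius $\lambda^m\rho_1$. Hence
\[
\hau^{s}_\infty\text{-type sum} \le M\,\big(C(n)\lambda^{-k}\big)^m(\lambda^m\rho_1)^{s}
= M\rho_1^{s}\,\big(C(n)\lambda^{s-k}\big)^m .
\]
Choose $s=k+\omega(\delta)$ with
\[
\omega(\delta):=\frac{2\log C(n)}{\log(1/\lambda)}
=\frac{2\log C(n)}{\log(1/(4\delta))},
\]
so that $C(n)\lambda^{s-k}=C(n)^{-1}<1$ (assuming $C(n)>1$). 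Then the sum tends to $0$ as $m\to\infty$, while the radii $\lambda^m\rho_1$ also tend to $0$. Hence $\hau^{k+\omega(\delta)}(E\cap\ball{R}{0})=0$.

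Since $\omega(\delta)\to0$ as $\delta\to0$, $\omega$ is a modulus of continuity, and letting $\delta\downarrow0$ gives $\dim E\le k$.

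The main obstacle is keeping the constant $C(n)$ in the single-scale count independent of $\delta$. This is exactly why the plane-dependent slab is covered at radius comparable to $\delta\rho$ rather than at a fixed fraction of $\rho$. Note also that $L$ is allowed to depend on the ball $(y,\rho)$; this causes no difficulty, because each generation applies ($\spadesuit$) afresh to each ball, using that its centre lies in $E$ and its radius is below $\rho_0$.
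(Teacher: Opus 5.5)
Your proposal is correct and follows essentially the same route as the paper. Both run an iterated covering in which each ball of radius $r$ centred in $E$ is replaced by boundedly many balls of radius comparable to $\delta r$ covering the slab given by $(\spadesuit)$. In both, $\omega(\delta)$ is chosen so that the number of children times $(4\delta)^{k+\omega(\delta)}$ is below one, which forces the $\hau^{k+\omega(\delta)}$-sums to decay geometrically.

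Your version is slightly more careful than the paper's on two points. First, you make explicit that the child count is $C(n)\lambda^{-k}$, whereas the paper writes $Q=Q(k)$, although it must depend on $\delta$. Second, you recentre the children at points of $E$, so that $(\spadesuit)$ can legitimately be reapplied at the next generation.
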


We say that $\omega: ]0,\infty[ \to ]0,\infty[$ is a modulus of continuity if $\omega$ is a non-decreasing function with $\lim_{s \to 0^+}\omega(s) = 0$.

\begin{proof}[Proof of Lemma~\ref{lem: tech}]
Clearly we may assume $\delta \in ]0, 10^{-3}[$ and $E$ is bounded.

Note that for any $k$-plane $L \subset \R^n$, we can cover the $2\delta$-neighbourhood of $L \cap \overline{\ball{1}{0}}$ by balls $\{\ball{4\delta}{y_j}\}_{j=1}^Q$ with centres $y_j \in L \cap {\ball{1}{0}}$ and $Q=Q(k)$ such that $Q(4\delta)^{k+\omega(\delta)} < 1/2$. This can be achieved, \textit{e.g.}, by taking the balls whose concentrically shrunk copies are disjoint, as in the Vitali covering theorem. Thus, by scaling, for any $R>0$ and any $k$-plane $L \subset \R^n$, we can cover the $(2\delta R)$-neighbourhood of $L \cap \overline{\ball{R}{0}}$ by balls $\{\ball{4\delta R}{y_j}\}_{j=1}^Q$ with centres $y_j \in L \cap {\ball{R}{0}}$ and $Q=Q(k)$ such that $Q(4\delta R)^{k+\omega(\delta)} < \frac{1}{2} R^{k+\omega(\delta)}$.

To prove the assertion, it suffices to show that for any given $\e>0$, $E$ can be covered by balls $\{B_i\}_{1}^{P}$ of arbitrarily small radius ${\rm rad}(B_i)=r_i$ such that $P(r_i)^{k+\omega(\delta)} < \e$; here $P$ and $r_i$ may both depend on $\e$. We proceed with an iterative construction: at stage $q \in \mathbb{N}$ such that $2^{-q}T_0<\e$, we take $P=P_q$ and $r_i \equiv \frac{(4\delta)^q\cdot \rho_0}{2}$ for each $i$, such that 
\begin{equation}\label{iterate q}
    P_q\left(\frac{(4\delta)^q\cdot \rho_0}{2}\right)^{k+\omega(\delta)} \leq 2^{-q}T_0,
\end{equation}
where $\rho_0, T_0>0$ are parameters fixed once and for all.

For the above purpose, at the initial state, cover $E$ by $\{\ball{\rho_0/2}{y_j}\}_{j=1}^{P_0}$ and set $T_0 := P_0\cdot \left(\frac{\rho_0}{2}\right)^{k+\omega(\delta)}$. This satisfies \eqref{iterate q} for $q=0$. For each $j \in \{1,\ldots,P_0\}$ and $z_j \in \ball{\rho_0/2}{y_j} \cap E$, by the property~$(\spadesuit)$, we can find a $k$-plane $L_j$ such that $\eta_{z_j,\rho_0/2}(E\cap \ball{\rho_0/2}{z_j}) \subset L_j + \ball{\delta}{0}$. Then, by the beginning part of this proof, we can cover the $(2\delta \rho_0)$-neighbourhood of $L_j \cap \overline{\ball{\rho_0/2}{0}}$ by balls $\{\ball{2\delta \rho_0}{z_j}\}_{j=1}^{Q}$ with centres $z_j \in L_j \cap {\ball{\rho_0}{0}}$ and number $Q$ such that $$Q(2\delta \rho_0)^{k+\omega(\delta)} < \frac{1}{2} \left(\frac{\rho_0}{2}\right)^{k+\omega(\delta)}.$$ Note that $P_1 = P_0Q$ as $Q$ is the same for each $j \in \{1,2,\ldots, P_0\}$. This gives \eqref{iterate q} for $q=1$.\footnote{For notational convenience, let us not relabel $z_j$ in this proof.}

Iterating this process and invoking the property~$(\spadesuit)$, at stage $q$ we can find a $k$-plane $L_j^{(q)}$ such that  $\eta_{z_j,(4\delta)^{q-1}\rho_0/2}(E\cap \ball{(4\delta)^{q-1}\rho_0/2}{z_j}) \subset L^{(q)}_j + \ball{\delta}{0}$. Then, we can cover the $[(4\delta)^{q-1}\rho_0/2]$-neighbourhood of $L^{(q)}_j \cap \overline{\ball{(4\delta)^{q-1}\rho_0/2}{z_j}}$ by balls $\left\{\ball{(4\delta)^q\rho_0/2}{z_j}\right\}_1^{Q'}$, such that 
\begin{equation}\label{ind'}
Q'\cdot \left(4\delta\cdot (4\delta)^{q-1}\cdot\frac{\rho_0}{2}\right)^{k+\omega(\delta)}<\frac{1}{2} \left(\frac{(4\delta)^{q-1}\rho_0}{2} \right)^{k+\omega(\delta)}.    
\end{equation}
Here $Q'$ is the same for each $j \in \{1,2,\ldots, P_{q-1}\}$. But by induction hypothesis,  \begin{equation*}
    P_{q-1}\left(\frac{(4\delta)^{q-1}\cdot \rho_0}{2}\right)^{k+\omega(\delta)} \leq 2^{-(q-1)}T_0,
\end{equation*}
which together with \eqref{ind'} yields that
\begin{align*}
    P_q \cdot \left(\frac{(4\delta)^q\cdot \rho_0}{2}\right)^{k+\omega(\delta)}  = P_{q-1}Q' \cdot \left(\frac{(4\delta)^q\cdot \rho_0}{2}\right)^{k+\omega(\delta)}  \leq 2^{-q}T_0.
\end{align*}
This proves \eqref{iterate q} by induction.   \end{proof}

\section{Some recent developments}\label{sec: final}

In this final section, we point out some significant recent developments in the harmonic map theory, mostly centred around stationary harmonic maps. The discussion here is rather brief and superficial: we only hope to attract the reader's attention to these recent breakthroughs.

\subsection{Energy concentration and bubbling}

Fix a positive constant  $\Lambda>0$. Set
\begin{equation}\label{H-lambda, def}
\hau_\Lambda :=\bigg\{u:\Omega \to \N:\,\text{$u$ is a stationary harmonic map such that } \E_\Omega[u] \leq \Lambda\bigg\}.
\end{equation}
In this subsection, without loss of generality we put $\Omega = \ball{1+\delta_0}{0}\subset\R^{n}$.

We consider the defect measures with respect to $W^{1,2}$-convergence for sequences in $\hau_\Lambda$. More precisely, define
\begin{align}\label{def, M}
\M := &\text{$\bigg\{$weak ${W^{1,2}}$-limits of $\mu_i = |\na u_i|^2\,\dd\leb^n\mres \bone$:$\,\{u_i\}\subset \hau_\Lambda$ for some $\Lambda>0\bigg\}$}. 
\end{align}
Then, given any $\mu \in \M$, there is a weakly convergent sequence $\{u_i\} \subset W^{1,2}(\Omega;\N)$ and  a positive Radon measure $\nu$ on $\bone$ such that 
\begin{align}\label{def, nu}
    \mu_i \equiv |\na u_i|^2\,\dd\leb^n\mres \bone \weak \mu = |\na u|^2 \,\dd\leb^n\mres \bone + \nu,
\end{align}
thanks to Fatou's lemma. We call $\nu$ the \emph{defect measure}. It quantifies the failure of strong $W^{1,2}$-convergence of $\{u_i\}$.

To further characterise $\mu$ or $\nu \in \M$, we introduce the \emph{energy concentration set}: 
\begin{align}\label{def, Sigma}
    \Sigma := \bigcap_{r>0} \bigg\{x \in \bone:\, \liminf_{i\to\infty} r^{2-n}\int_{\ball{r}{x}}|\na u_i(y)|^2\,\dd y \geq \e_0 \text{ for some $\e_0>0$} \bigg\},
\end{align}
and denote 
\begin{align*}
    \pi: \M\longrightarrow\F,\qquad \pi(\mu) = \Sigma.
\end{align*}
The idea is that we may characterise $\Sigma$ (which contains $\singu$) only in terms of $\mu \in \M$, without reference to the weakly convergent sequence $\{u_i\}$.

By a delicate blow-up analysis of the defect measure at the energy concentration set, F.-H. Lin in~\cite{lin} established the following theorem. In particular, the defect measure and singular set for a  stationary harmonic map are $(n-2)$-rectifiable.
\begin{theorem}[Lin~\cite{lin}]\label{thm: structure of Sigma}
Let $\mu \in \M$. Then:
\begin{enumerate}
    \item 
    $\Theta(\mu,x):= \lim_{r \to 0^+} r^{2-n}\mu(\ball{r}{x})$ exists for every $x \in \bone \Subset \Omega$. 
    \item 
    $x \in \Sigma$ iff $\Theta(\mu,x) \geq \e_0>0$.
    \item 
    Let $\nu$ be the positive Radon measure defined in \eqref{def, nu}. We have the decomposition:
    \begin{equation*}
        \pi(\mu) \equiv \Sigma = \singu \cup \spt{\nu}\qquad \text{on }\Omega = \ball{1+\delta_0}{0}\subset\R^n.
    \end{equation*}
    \item 
For $x \in \bone$, it holds that
\begin{equation*}
    \nu(x) = \Theta(x) \hau^{n-2}\mres \Sigma,
\end{equation*}
where
\begin{equation*}
    \e_0 \leq \Theta(x) \leq \left(\frac{\delta_0}{2}\right)^{2-n}\Lambda\qquad\text{for }\hau^{n-2}\mres \Sigma \text{ a.e. } x.
\end{equation*}
    
    \item 
    $\Sigma$ and $\nu$ are both $\hau^{n-2}$-rectifiable. 

    \item 
    $u$ is smooth the harmonic away from the concentration set $\Sigma$, which is a relatively closed subset of $\Omega$ with locally finite $\hau^{n-2}$-measure.
    
\end{enumerate} 
    
\end{theorem}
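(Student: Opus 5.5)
Since the stated result is Lin's theorem, I would follow Lin's blow-up strategy. The plan runs from monotonicity for the limit measure, through $\e$-regularity and Preiss-type rectifiability, to the conclusions.

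\emph{Step 1 (monotonicity and density).} Each $u_i\in\hau_\Lambda$ satisfies the monotonicity identity~\eqref{monotonicity identity}. Passing to the limit in the integrated form along radii $r$ with $\mu(\p\ball{r}{x})=0$, I get that $r\mapsto r^{2-n}\mu(\ball{r}{x})$ is nondecreasing for every $x\in\bone$. This gives existence of $\Theta(\mu,x)$, which is (1). It also gives upper semicontinuity of $\Theta(\mu,\cdot)$ and the upper bound $\Theta\le(\delta_0/2)^{2-n}\Lambda$.

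\emph{Step 2 ($\e$-regularity and the support).} For (2), (3) and (6) I would use the stationary $\e$-regularity (small energy version, after Theorem~\ref{thm: epsilon reg}). Fix $x$ with $\Theta(\mu,x)<\e_0$. Then for small $r$ we have $\limsup_i r^{2-n}\int_{\ball{r}{x}}|\na u_i|^2<\e_0$. The smallness of $r^{-n}\int_{\ball{r}{x}}|u_i-(u_i)_{x,r}|^2$ follows from Poincaré, so the $u_i$ are uniformly smooth near $x$. Hence $u_i\to u$ in $C^k$ there, $\nu=0$ near $x$, and $u$ is smooth near $x$. Conversely, $x\in\Sigma$ forces $\Theta\ge\e_0$. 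Points with $\Theta\ge\e_0$ lie either in $\spt\nu$, or in $\singu$ by Corollary~\ref{cor: eps-reg} applied to $u$. Closedness of $\Sigma$ follows from upper semicontinuity. A Vitali covering argument, as in Theorem~\ref{thm: n-2 measure of singular set is zero}, gives $\hau^{n-2}(\Sigma\cap K)\le C\Lambda/\e_0$.

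\emph{Step 3 (structure of $\nu$).} Because $\hau^{n-2}(\singu)=0$ while $\nu$ lives on the $(n-2)$-dimensional set $\Sigma$, the density bounds together with standard differentiation of measures give $\nu=\Theta\,\hau^{n-2}\mres\Sigma$. Here $\Theta$ is comparable to $\Theta(\mu,x)$ for $\hau^{n-2}$-a.e.\ $x$: the contribution of $|\na u|^2$ to the $(n-2)$-density vanishes $\hau^{n-2}$-a.e. This yields (4).

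\emph{Step 4 (rectifiability, the main obstacle).} For (5), I would blow up $\mu$ at $\hau^{n-2}$-a.e.\ $x\in\Sigma$. By a diagonal argument, tangent measures of $\mu$ are again in $\M$ and are cones by monotonicity. I then need to show that they are in fact $\Theta(x)\hau^{n-2}\mres V$ for an $(n-2)$-plane $V$. The plan is to prove that for $\hau^{n-2}$-a.e.\ point, the tangent measure has its density maximised on a set of positive $\hau^{n-2}$-measure. By the spine argument (Proposition~\ref{propn: S}), it is then translation invariant along an $(n-2)$-plane. Next I would show that the absolutely continuous part vanishes after blow-up, since $|\na u|^2$ has no $(n-2)$-concentration. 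Finally, I would rule out residual energy off the plane by $\e$-regularity in the transverse directions. Once every tangent measure is flat, Preiss' or Marstrand–Mattila's rectifiability criterion gives $(n-2)$-rectifiability of $\nu$ and $\Sigma$.

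The delicate part is Step 4: showing that $\hau^{n-2}$-a.e.\ tangent measure is flat, which needs the cone splitting and density pinching on a positive-measure set. If direct flatness proves hard, I would fall back on Federer's dimension reduction, as in the proof of Theorem~\ref{thm: codimenion 3}, applied to $\mu$.
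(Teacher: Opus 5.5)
The paper gives no proof of this theorem; it is quoted from Lin, so your outline has to stand on its own.

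\emph{Steps~1--3.} Steps~1 and~2 are sound and give (1), (2), (3) and (6):
\begin{itemize}
\item monotonicity passes to $\mu$ along radii with $\mu(\p\ball{r}{x})=0$, which gives existence and upper semicontinuity of $\Theta(\mu,\cdot)$;
\item small-energy $\e$-regularity plus Poincar\'e gives uniform smooth bounds on the $u_i$ wherever $\Theta(\mu,\cdot)<\e_0$;
\item the covering argument bounds $\hau^{n-2}(\Sigma\cap K)$.
\end{itemize}
To put points of $\{\Theta(\mu,\cdot)\ge\e_0\}\setminus\spt{\nu}$ into $\singu$ you do not need Corollary~\ref{cor: eps-reg}. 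If $u$ were smooth near $x$, then $r^{2-n}\int_{\ball{r}{x}}|\na u|^2\,\dd x\le Cr^2\to0$.

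Step~3, however, uses a premise you do not have. Theorem~\ref{thm: n-2 measure of singular set is zero} concerns stationary maps, and the weak limit $u$ need not be stationary (the Ding--Li--Li phenomenon recalled after Lemma~\ref{lem: convergence to tangent map}). A priori, $u$ is stationary only on $\Omega\setminus\spt{\nu}$, where $u_i\to u$ strongly. Fortunately the premise is not needed:
\begin{itemize}
\item Since $\Theta(\mu,\cdot)\le C$ everywhere and $\nu$ is concentrated on $\Sigma$, you get $\nu\le C'\hau^{n-2}\mres\Sigma$.
\item The $(n-2)$-density of $|\na u|^2\leb^n$ vanishes off an $\hau^{n-2}$-null set. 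So $\lim_{r\to0}r^{2-n}\nu(\ball{r}{x})=\Theta(\mu,x)\ge\e_0$ for $\hau^{n-2}$-a.e.\ $x\in\Sigma$, and hence $\nu\ge c\,\e_0\hau^{n-2}\mres\Sigma$.
\end{itemize}
In particular, $\hau^{n-2}(\Sigma\setminus\spt{\nu})=0$ is an output of this argument, not an input.

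\emph{Step~4: the genuine gap.} You state the goal, that the density of a tangent measure $\eta$ is maximal on a large set, but give no mechanism for it. Your fallback, Federer's dimension reduction, cannot rescue (5): it only produces Hausdorff-dimension bounds, never rectifiability.

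The missing idea is approximate continuity. At $\nu$-a.e.\ $x$, the upper semicontinuous (hence Borel) function $\Theta(\mu,\cdot)$ is approximately continuous with respect to $\nu$, by Besicovitch differentiation. Since $\nu(\ball{r}{x})\approx\Theta(\mu,x)r^{n-2}$, every $y\in\spt{\eta}$ is a limit of rescaled points $y_k$ with $\Theta(\mu,x+r_ky_k)\ge\Theta(\mu,x)-o(1)$. Upper semicontinuity of densities along the blow-up sequence then gives $\Theta(\eta,y)\ge\Theta(\eta,0)$ on all of $\spt{\eta}$. The reverse inequality is the computation of Lemma~\ref{lem: maximisation of density for tgt map}.

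Two points you glossed must be redone for measures:
\begin{itemize}
\item Constancy of $r^{2-n}\eta(\ball{r}{0})$ does not by itself make $\eta$ a cone. Test the stationarity identity of the approximating maps $v_k$ with $\varphi=x\,g(x)$. The cross term $\int(x\cdot\na v_k)(\na g\cdot\na v_k)\,\dd x$ vanishes in the limit because the radial energy does, leaving $\int\big((n-2)g+x\cdot\na g\big)\,\dd\eta=0$.
\item Proposition~\ref{propn: S} is about maps, not measures. The same computation centred at $y$ replaces it.
\end{itemize}
With these, $\spt{\eta}$ is a linear subspace $V$ along which $\eta$ is translation invariant. So $\eta=c\,\hau^{\dim V}\mres V$, and $(n-2)$-homogeneity forces $\dim V=n-2$. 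No transverse $\e$-regularity is needed, and Marstrand--Mattila then applies.

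More simply, you can bypass the flatness analysis entirely. Steps~1--3 already show that $\lim_{r\to0}r^{2-n}\nu(\ball{r}{x})$ exists and lies in $[\e_0,C]$ for $\nu$-a.e.\ $x$. Preiss's theorem therefore gives $(n-2)$-rectifiability of $\nu$ with no input about tangent measures. Since $\hau^{n-2}\mres\Sigma\ll\nu$, rectifiability transfers to $\Sigma$.
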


Moreover, by \cite[Lemma~3.1]{lin}, $\hau^{n-2}(\Sigma)>0$ for some $\mu \in \M$, $\Sigma = \pi(\mu)$ if and only if  the target manifold $\N$ carries a harmonic 2-sphere; \emph{i.e.}, there exists a nonconstant smooth harmonic map $\stwo \to \N$. These are the ``bubbles'' in the Sacks--Uhlenbeck theory~\cite{su}. It then follows from \cite{simon''} by Simon that, if $\N$ carries no harmonic 2-spheres, then $\dim\singu \leq n-4$ for any stationary harmonic map $u:\Omega\to\N$. See \cite[Theorem~D]{lin}.

Lin--Rivi\`{e}re established the following energy quantisation identity for sphere-valued stationary harmonic maps~\cite[Theorem~A]{lr}:
\begin{theorem}[Lin--Rivi\`{e}re~\cite{lr}]\label{thm: Lin-Riviere}
Let $\N$ be the standard sphere ${\bf S}^k$, $k \geq 2$, and $u: \Omega \subset \R^n \to \N$ be a stationary harmonic map with defect measure $\nu$. Then  for $\nu(x) = \Theta(x) \hau^{n-2}\mres \Sigma$ as in Theorem~\ref{thm: structure of Sigma} (4), for \textit{a.e.} $x$, the function $\Theta$ is a finite sum of the Dirichlet energy of harmonic 2-spheres in $\N = {\bf S}^k$. 
\end{theorem}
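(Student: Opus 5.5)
The plan is to follow Lin--Rivi\`{e}re: blow up the defect measure at a generic point of $\Sigma$, reduce the question to a sequence of essentially two-dimensional maps, and then show that the energy lost in the limit sits entirely in bubbles rather than in the necks. The sphere target enters only through the conservation-law structure of the equation.

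\emph{Blow-up to a flat configuration.} Here $\nu$ is the defect measure of a weakly convergent sequence $\{u_i\}\subset\hau_\Lambda$ with $u_i\weak u$. By Theorem~\ref{thm: structure of Sigma}, $\Sigma$ is $(n-2)$-rectifiable and $\nu=\Theta\,\hau^{n-2}\mres\Sigma$. At $\hau^{n-2}$-a.e. $x_0\in\Sigma$, $\Sigma$ has an approximate tangent plane $P\cong\R^{n-2}$ and $\Theta$ is approximately continuous at $x_0$. Rescaling about $x_0$ and using a diagonal subsequence gives a new sequence of stationary harmonic maps $v_i$ with three properties:
\begin{itemize}
\item $|\na v_i|^2\,\dd\leb^n\weak \Theta(x_0)\,\hau^{n-2}\mres P$;
\item $v_i$ converges weakly to a constant;
\item the tangential energy $\int|\na_P v_i|^2$ tends to $0$ locally. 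This comes from the stationarity identity \eqref{stationary hm, coord} tested with vector fields along $P$, as in Lin's argument.
\end{itemize}
A Fubini and slicing argument then selects points $y\in P$ near $0$ for which the two-dimensional slices $w_i=v_i(y,\cdot)$ on discs $D\subset P^\perp$ are almost harmonic. Precisely, the slice equation has the form $\Delta w_i+A(w_i)(\na w_i,\na w_i)=f_i$ with $\|f_i\|_{L^2}\to 0$, and the slice energy tends to $\Theta(x_0)$ times the normalising constant.

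\emph{Bubble extraction.} For these almost-harmonic 2D sequences I will run the Sacks--Uhlenbeck/Ding--Tian procedure. Using the small-energy $\e$-regularity on discs, I identify finitely many concentration scales and centres and extract harmonic spheres $\omega_1,\dots,\omega_m:\stwo\to{\bf S}^k$. The number $m$ is bounded because each bubble carries at least $\e_0$ energy. The energy identity then becomes
\begin{equation*}
\Theta(x_0)=\sum_{l=1}^m \E[\omega_l],
\end{equation*}
provided no energy is lost in the neck annuli $A_i=\{r_i<|z|<R_i\}$ between consecutive scales.

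\emph{Main obstacle: the neck analysis.} This is where I expect the real difficulty; it is the heart of \cite{lr}. On the annuli the energy is small on every dyadic subannulus, so the angular part $\int|\p_\theta w_i|^2$ is controlled there. The radial part is the issue. I would first use a Pohozaev-type identity coming from stationarity: combined with $f_i\to0$, it shows that radial and angular energies on the neck agree up to $o(1)$. It then remains to bound the angular energy in the neck.

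For ${\bf S}^k$ targets I will rewrite the equation as $\mathrm{div}(w^\alpha\na w^\beta-w^\beta\na w^\alpha)=(\text{error})$. This gives a div--curl (Wente) structure, so $\na w$ can be estimated in the Lorentz space $L^{2,1}$ and its dual $L^{2,\infty}$ on the neck. One bounds $\|\na w_i\|_{L^{2,\infty}(A_i)}$ by the dyadic smallness, and bounds the conservation-law part in $L^{2,1}$ via Wente's inequality in Lorentz spaces. The duality estimate
\begin{equation*}
\int_{A_i}|\na w_i|^2\lesssim\|\na w_i\|_{L^{2,1}}\|\na w_i\|_{L^{2,\infty}}
\end{equation*}
then tends to $0$.

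The subtle points are two: controlling the $n$-dimensional errors introduced by slicing uniformly across the $P$-directions, and handling the fact that $L^{2,1}$ bounds are not directly additive over dyadic annuli. If the latter proves awkward, my fallback is to first establish a neck decay estimate of Hodge-decomposition type.
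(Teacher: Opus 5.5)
The paper gives no proof of this statement; it only quotes \cite[Theorem~A]{lr}. Your overall frame is the right one: Lin's blow-up to a flat limit measure with vanishing tangential energy, extraction of bubbles, and a neck analysis via $L^{2,1}$--$L^{2,\infty}$ duality and the conservation law. However, the step on which everything downstream rests fails. You cannot select slices $w_i=v_i(y,\cdot)$ that are almost harmonic with $\|f_i\|_{L^2}\to0$. On a slice the error is
\[
f_i=-\Delta_P v_i-A(v_i)(\na_P v_i,\na_P v_i),
\]
and the only information you have is $\int|\na_P v_i|^2\to0$ after integrating over $P$. This controls the quadratic term only in $L^1$. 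It controls $\Delta_P v_i$ only as a $P$-derivative of an $L^2$-small field, i.e.\ in a negative norm in the $P$-variables, and gives nothing on an individual slice. Stationary maps need not be $W^{2,2}$. Even on the neck, where $n$-dimensional $\e$-regularity applies, the best available bound $|\na^2 v_i|\lesssim\sqrt{\e}\,|x|^{-2}$ only gives $\|f_i\|_{L^1(r_i<|x|<R_i)}\lesssim\sqrt{\e}\log(R_i/r_i)$. That is exactly the borderline logarithmic quantity the neck analysis has to beat. The same objection hits three other steps, each of which presupposes control of the slice tension:
\begin{itemize}
\item your use of two-dimensional $\e$-regularity on slices to extract bubbles;
\item your $L^{2,\infty}$ bound on the neck;
\item the uniform $L^{2,1}$ bound from Wente, which needs the error in $\mathrm{div}(w_i\wedge\na w_i)$ controlled in a Hardy-type norm.
\end{itemize}

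A consistency check shows this is the main difficulty, not a technicality. If such slices existed, the Ding--Tian energy identity for surface maps with $L^2$-bounded tension, valid for \emph{every} compact target, would finish the proof. The sphere hypothesis would then be superfluous, and your argument would give Theorem~\ref{thm: NV-energy identity} essentially for free, whereas that required the new methods of \cite{nv'}. Indeed, in your sketch the sphere structure enters only in the neck, where with $L^2$ tension it is not needed.

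What is missing is a mechanism that keeps the $n$-dimensional structure throughout:
\begin{itemize}
\item use $\mathrm{div}(v_i\wedge\na v_i)=0$ in all $n$ variables, so that on slices the defect appears only in divergence form, $\mathrm{div}_{P^\perp}(v_i\wedge\na_{P^\perp}v_i)=-\mathrm{div}_P(v_i\wedge\na_P v_i)$;
\item get the neck smallness from $n$-dimensional $\e$-regularity on cylindrical regions;
\item extract bubbles by rescaling in all $n$ variables;
\item arrange the Hodge/Wente and Pohozaev estimates so that tangential terms enter only through $\|\na_P v_i\|_{L^2}\to0$ integrated over $P$, never through second derivatives on a single slice.
\end{itemize}

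A minor point: the vanishing of the tangential energy comes from testing stationarity with fields radial in the $P^\perp$-directions, of the form $\psi(y)\chi(|x|)\,x$, not with fields along $P$.
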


In a recent breakthrough~\cite{nv'},  Naber--Valtorta extended the energy identity Theorem~\ref{thm: Lin-Riviere} to all compact targets by exploiting new ideas and techniques:
\begin{theorem}\label{thm: NV-energy identity}
Theorem~\ref{thm: Lin-Riviere} remains valid for stationary harmonic maps into any compact $C^2$-target manifold $\N$.
\end{theorem}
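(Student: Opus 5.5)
The plan is to reduce Theorem~\ref{thm: NV-energy identity} to a quantitative statement about the \emph{neck regions} of a single blow-up sequence at an $\hau^{n-2}$-generic point of $\Sigma$. By Theorem~\ref{thm: structure of Sigma}, $\nu = \Theta\,\hau^{n-2}\mres\Sigma$ with $\Sigma$ rectifiable. So for $\hau^{n-2}$-a.e. $x\in\Sigma$ there is an approximate tangent $(n-2)$-plane $V$, and $\nu$ blows up to $\Theta(x)\hau^{n-2}\mres V$. Fix such an $x$ (which we may also take to be a Lebesgue point of $\Theta$). By a diagonal argument, rescale the sequence $u_i$ at $x$ with scales $r_i\to0$ chosen so that the rescaled maps $\tilde u_i$ have the following properties:
\begin{itemize}
\item they converge weakly to a constant;
\item their energy measures converge to $\Theta(x)\hau^{n-2}\mres V$;
\item they are almost invariant in the $V$-directions, meaning that $\int_{\ball{2}{0}}|\na_V\tilde u_i|^2\to0$. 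This follows from the monotonicity identity~\eqref{monotonicity identity} applied at many centres along $V$, in the spirit of Lemma~\ref{lem: basic properties of harmonic maps}.
\end{itemize}
Slicing orthogonally to $V$, for most $y\in V\cap\ball{1}{0}$ the $2$-dimensional slices $v_i^y:=\tilde u_i(y,\cdot)$ on $\ball{1}{0}\subset\R^2$ are then almost harmonic: the tension is small in $L^2$, coming from the vanishing of the $V$-derivatives. Moreover, $\int|\na v_i^y|^2$ tends to $\Theta(x)$ on average.

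Next I run a bubble-tree decomposition on the slices. The $\e$-regularity Theorem~\ref{thm: epsilon reg} (in its stationary, small-energy form) identifies the concentration points. Rescaling at each one produces, after passing to a limit, a nonconstant harmonic map $\R^2\to\N$, which by removable singularities is a harmonic $2$-sphere. Since each bubble carries at least a fixed quantum of energy, there are finitely many. The energy of the slices then splits into three parts: the bubbles, the weak limit (which is constant, so its energy is zero), and the energy in the neck annuli $\ball{R\lambda_i}{a}\setminus\ball{\lambda_i/R}{a}$ lying between successive bubble scales. The theorem is thus reduced to showing that the neck energy tends to zero.

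The neck estimate is the main obstacle. For sphere targets, Lin--Rivi\`ere exploit the conservation laws and $\mathcal{H}^1$--BMO duality, which is unavailable for general $\N$. For general compact $C^2$ targets I would first treat the angular and radial parts separately on each dyadic annulus of a neck:
\begin{itemize}
\item The \emph{angular} energy should decay exponentially from the two ends of the neck. The reason is that on an annulus with small energy the map is nearly constant on circles, and a three-annulus argument applies to the almost-harmonic equation, provided the tension error is summable across scales.
\item The \emph{radial} energy should be controlled by a Pohozaev-type identity derived from~\eqref{stationary hm, coord}: stationarity forces radial and angular energy to balance up to the error from the $V$-directions.
\end{itemize}
The delicate point, which is where I expect real difficulty, is that the slice tension is only small on average over $y\in V$, not uniformly, and summing it over up to $\log(1/\lambda_i)$ dyadic scales can destroy the estimate. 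To handle this I would attempt a quantitative covering in the style of Naber--Valtorta~\cite{nv}: decompose into neck regions, where at every scale the map is $(n-2)$-symmetric up to $\delta$, and use a Reifenberg-type packing estimate $\sum r_j^{n-2}\le C$ for the bad balls. The aim is to prove that the accumulated non-symmetry, namely the sum over scales of the monotone quantity drops in~\eqref{monotonicity identity}, is summable. That would give an $L^1$-in-scale bound on the tension, which the angular decay argument can absorb. Integrating the resulting identity $\Theta(x)=\sum_k\E[\omega_k]$ over slices, and using that the bubbles $\omega_k$ take values in the discrete set of energies of harmonic spheres, yields the claim for $\hau^{n-2}$-a.e. $x$.
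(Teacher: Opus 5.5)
The paper contains no proof of this theorem; it is cited from Naber--Valtorta~\cite{nv'}. Your proposal therefore has to stand on its own, and it has a genuine gap. The reduction is the standard Lin/Lin--Rivi\`ere set-up and is fine: blow-up at an $\hau^{n-2}$-generic point of $\Sigma$, $\int_{\ball{2}{0}}|\na_V\tilde u_i|^2\to0$, and bubbles are harmonic $2$-spheres.

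The first substantive claim is not fine. Vanishing of $\int|\na_V\tilde u_i|^2$ does \emph{not} make the slice tension small in $L^2$, not even on average in $y$. By~\eqref{weak harmonic map eq, 0}, on a.e.\ slice
\[
\tau(v_i^y)=-\big(\Delta_V\tilde u_i\big)^T(y,\cdot)=-\Big(\Delta_V\tilde u_i+A_{\tilde u_i}(\na_V\tilde u_i,\na_V\tilde u_i)\Big)(y,\cdot),\qquad \Delta_V:=\sum_{k=1}^{n-2}\p_{y_k}^2 .
\]
This is second order in the $V$-directions, and nothing in your set-up controls $\na_V^2\tilde u_i$ in $L^2$. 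At best, $\e$-regularity gives pointwise bounds of order $d^{-2}$ at distance $d$ from the concentration set, and these are not square integrable uniformly in $i$. What actually follows from your hypotheses is weaker:
\begin{itemize}
\item the quadratic term is $o(1)$ in $L^1$;
\item $\Delta_V\tilde u_i$ is the $V$-divergence of an $L^2$-small field, a statement about differentiation \emph{across} slices that gives nothing slice by slice.
\end{itemize}
Note also that if your claim were true, then after a subsequence and Fubini the classical two-dimensional energy identity for sequences with $L^2$-bounded tension (Ding--Tian) would essentially finish the proof. You would need neither conservation laws nor new machinery. This strongly suggests that this step is where the whole difficulty lies.

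Your proposed remedy does not close the gap. The drops of the monotone quantity in~\eqref{monotonicity identity}, summed over scales, telescope, so at every centre they are bounded by the energy bound. Summability is therefore automatic and is not the issue.

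The issue is that these drops control only first-order quantities. They control $\p_r\tilde u_i$ about centres on the concentration set, hence $\na_V\tilde u_i$ and the radial derivative. Nothing in your outline converts this into scale-by-scale control of the second-order quantity $(\Delta_V\tilde u_i)^T$ across the $\sim\log(1/\lambda_i)$ dyadic annuli of a neck.

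The norm in which such control is obtained is also decisive. For two-dimensional almost harmonic maps, $L^1$ is the critical space for the tension. A point-like tension of mass $\eta$ produces $|\na v|\sim\eta/|x|$, whose energy on $\ball{1}{0}\setminus\ball{\lambda_i}{0}$ is $\sim\eta^2\log(1/\lambda_i)$. So an unspecified ``$L^1$-in-scale bound'' is not obviously absorbable by your angular decay argument either.

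Supplying such an estimate on neck regions, or circumventing it, is exactly the new input of~\cite{nv'}. As written, your argument reduces the theorem to its hardest step and stops there.

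There is also a smaller gap at the end. Integrating slicewise identities over $y$ only shows that $\Theta(x)$ is an \emph{average} of elements of the discrete set of finite sums of bubble energies, and such an average need not lie in that set. You also need the slice energies to converge to $\Theta(x)$ for a.e.\ $y$, for example in measure. That requires a separate argument, for instance testing~\eqref{stationary hm, coord} with $V$-directional vector fields.
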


\subsection{Quantitative stratification}
In the recent seminal work~\cite{nv}, Naber--Valtorta significantly improved the stratification Theorem~\ref{thm: stratification} by establishing the following:
\begin{theorem}\label{thm: nv}
Let $u:\Omega \subset \R^n \to \N$ be a   stationary harmonic map, where $\N$ is any compact Riemannian manifold. The $k^{\text{th}}$-stratum $S_k(u)$ of its singular set is $k$-rectifiable for each $k$. 
\end{theorem}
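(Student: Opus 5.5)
The plan is to follow Naber--Valtorta's quantitative approach rather than tangent-map uniqueness. The latter is unavailable for stationary maps: we have neither the compactness Theorem~\ref{thm: cptness} nor equality of densities as in \eqref{two Theta}. First I will replace $S_k(u)$ by the quantitative strata $S^k_{\e,r}$. These consist of points $x$ such that, for every scale $s\in[r,1]$, the map $u$ restricted to $\ball{s}{x}$ is not $\e$-close in normalised $L^2$ to any $(k+1)$-symmetric map in the sense of Definition~\ref{def: symmetry}. I will then check that $S_k(u)=\bigcup_{\e>0}\bigcap_{r>0}S^k_{\e,r}$. This uses Lemma~\ref{lem: convergence to tangent map} and the $0$-homogeneity from Lemma~\ref{lem: basic properties of harmonic maps}: a point outside every $S^k_{\e,0}$ has tangent maps that are $(k+1)$-symmetric. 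Since a countable union of $k$-rectifiable sets is $k$-rectifiable, it suffices to prove that each $S^k_{\e}:=S^k_{\e,0}$ is $k$-rectifiable, with a uniform Minkowski bound $\leb^n(\ball{r}{S^k_\e}\cap\bone)\le C(n,\N,\Lambda,\e)r^{n-k}$.

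Next I will use the monotone quantity $\theta(x,s)=s^{2-n}\int_{\ball{s}{x}}|\na u|^2$ from \eqref{monotonicity identity}. Its pinching $W_s(x)=\theta(x,2s)-\theta(x,s)$ equals a weighted radial energy $\int|x-y|^{2-n}|\p_{r_x}u|^2$. The key new estimate is an $L^2$-best-approximation bound tying Jones' $\beta$-numbers to this pinching. Let $\mu$ be a finite measure on $\ball{s}{x}$, and suppose $u$ is $\e$-far from $(k+1)$-symmetric on the relevant ball. Then
\begin{align*}
\beta^k_{\mu,2}(x,s)^2\le C\,s^{-k}\int_{\ball{s}{x}}W_s(y)\,\dd\mu(y).
\end{align*}
To prove it, I will take the second-moment matrix of $\mu$ and show that its $(k+1)$-st eigenvector direction $v$ satisfies $\lambda_{k+1}\int|v\cdot\na u|^2\lesssim\int W\,\dd\mu$. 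This comes from writing $v\cdot\na u$ as an average of radial derivatives based at points of $\mu$. Combined with a uniform lower bound on $\int|V\cdot\na u|^2$ over all $(k+1)$-planes $V$, which follows from $\e$-non-symmetry via a contradiction and compactness argument, this controls the small eigenvalues.

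Then I will run the rectifiable-Reifenberg machinery. For a discrete measure $\mu$ on a packing of balls centred in $S^k_\e$, summing the $\beta$-bound over dyadic scales telescopes the pinchings. The total is bounded by $\Lambda$ times a packing constant, giving the Carleson condition $\sum_s\int\beta^2\,\dd\mu\le C\mu$ required by the discrete Reifenberg theorem. The resulting packing bound $\sum r_i^k\le C$ yields the Minkowski estimate. Applying the $W^{1,p}$/rectifiable Reifenberg theorem to $\hau^k\mres S^k_\e$, with the same $\beta$-estimate, yields rectifiability.

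I expect the main obstacle to be the covering step. To get the packing bound one must decompose balls into \enquote{good} balls, where the pinching is small and the $\beta$-estimate applies, and \enquote{bad} balls, where the energy drops by a definite amount. Because energy can concentrate in the defect measure, stationary maps lack the strong compactness used in the proof of the claim~$(\clubsuit)$. So the energy-drop argument must be done through the cone-splitting principle applied to limits of $\mu_i$ in $\M$, using Theorem~\ref{thm: structure of Sigma}, rather than to tangent maps. Only finitely many drops, $\le\Lambda/\eta$, can occur, and this ends the induction.
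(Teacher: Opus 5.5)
Your overall route (quantitative strata, an $L^2$ best-approximation bound of $\beta$-numbers by the pinching of $\theta(y,s)=s^{2-n}\int_{\ball{s}{y}}|\na u|^2\,\dd x$, and the discrete/rectifiable Reifenberg theorems) is the Naber--Valtorta argument that the paper cites without proof. However, two key steps fail as written.

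\emph{The lower bound behind your $\beta$-estimate.} The uniform lower bound on $\int|V\cdot\na u|^2$ over $(k+1)$-planes $V$ does \emph{not} follow from $\e$-non-$(k+1)$-symmetry alone. Take $k\le n-2$, a unit-speed geodesic $\gamma$ of $\N$, and small $\lambda>0$. The smooth (hence stationary) map $u(y)=\gamma(\lambda y_n)$ has vanishing derivative in every direction of $e_n^\perp$. Yet $\bone$ is not $(k+1,\e)$-symmetric once $\e$ is small compared with $\lambda^2$, since $u$ is $L^2$-far from every $0$-homogeneous map. Your compactness argument gives a limit that is translation-invariant in $k+1$ directions on an annulus, but nothing makes it $0$-homogeneous. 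In Naber--Valtorta's Theorem~7.1 the ball $\ball{8r}{x}$ is additionally assumed $(0,\delta)$-symmetric, i.e.\ $L^2$-close to a map $0$-homogeneous about $x$; small pinching at $x$ provides this. The covering must therefore be arranged so that the estimate is only invoked at such centres. Relatedly, your pinching window is too narrow; one needs something like $\theta(y,8r)-\theta(y,r)$ tested on the annulus $\ball{4r}{x}\setminus\ball{3r}{x}$. With $\theta(\cdot,2s)-\theta(\cdot,s)$ and $\mu$ on $\ball{s}{x}$, no annulus about $x$ lies in $\ball{2s}{y}\setminus\ball{s}{y}$ for all $y\in\ball{s}{x}$.

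\emph{The Carleson step.} The discrete Reifenberg theorem requires $\int_{\ball{r}{x}}\sum_{s\le r}\beta^k_{\mu,2}(z,s)^2\,\dd\mu(z)\le\delta(n)^2r^k$ with $\delta(n)$ a \emph{small} dimensional constant. Its conclusion is the very packing bound you feed in. So a Carleson bound of size $C(n,\N,\Lambda,\e)\,\Lambda$ times a packing constant is neither small nor non-circular.

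The missing idea is that the smallness must come from the pinching itself. One restricts to points where $\theta$ drops by at most $\eta$ over the entire range of scales considered, so the telescoped sum of pinchings is $\lesssim\eta$ rather than $\Lambda$. Combined with an inductive-on-scales packing bound $\mu(\ball{s}{y})\le D(n)s^k$, Fubini then gives a Carleson sum at most $C(n,\N,\Lambda,\e)\,\eta\,r^k<\delta(n)^2r^k$ once $\eta$ is small. Thus the $\Lambda/\eta$ energy-drop iteration is not merely a covering device at the end; it is what makes the Reifenberg hypothesis hold. The same localisation to small total pinching is needed before applying the rectifiable Reifenberg theorem to $\hau^k\mres S^k_\e$.

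By contrast, the detour through defect measures and Theorem~\ref{thm: structure of Sigma} is unnecessary. Pinching controls $\p_r u$ in $L^2$, which is lower semicontinuous under weak $W^{1,2}$-convergence. Moreover, $(k,\e)$-symmetry is an $L^2$ condition stable under strong $L^2$-convergence. Hence quantitative rigidity and cone-splitting follow from plain compactness of stationary maps.
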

Indeed, a stronger result has been obtained in \cite{nv}: For $\hau^k$-\textit{a.e.} $x \in S_k(u)$, there exists a \emph{unique} $k$-plane $V^k \subset T_x\Omega$ such that \emph{every} tangent map at $x$ is $k$-symmetric with respect to $V^k$. That is, although it is still an open question whether the tangent map for a stationary harmonic map is unique, it is now known that the spine of tangent maps at $\hau^k$-\textit{a.e.} $x \in S_k(u)$ is unique.

Moreover, for minimising harmonic maps $u: \ball{4}{p} \subset \Omega \to \N$, it is proved that the top-dimensional stratum $S_{n-3}(u) \subset \singu$ is not only $(n-3)$-rectifiable, but its $\hau^{n-3}$-measure is locally uniformly bounded. Indeed, 
\begin{align*}
\hau^{n-3}\Big(S_{n-3}(u) \cap \ball{1}{p}\Big) \leq C(\Omega,\N,\Lambda)\qquad \text{whenever $\dashint_{\ball{2}{p}} |\na u|^2\,\dd x \leq \Lambda$}.
\end{align*}
Moreover, sharp, effective quantitative bounds can be obtained:
\begin{align*}
{\rm Vol}\Big(\left\{|\na u|>r^{-1}\right\}\cap \ball{1}{p}\Big) \leq C(\Omega,\N,\Lambda)r^3.
\end{align*}

One of the key novel ideas introduced in~\cite{nv} is the quantitative stratification of $\singu$. Instead of considering the sets on which $u$ is exactly $k$-symmetric, they consider a quantitative, relaxed version: say that $\ball{r}{x} \subset \Omega$ is \emph{$(k,\e)$-symmetric} if there exists a $k$-symmetric $\tilde{u}: T_x\Omega \to \N$ such that $\dashint_{\ball{r}{x}}\left|u-\tilde{u}\right|^2\,\dd x < \e$. Then, for a stationary harmonic map $u: \ball{4}{p} \subset \Omega \to \N$ and parameters $\e,r>0$, defined the following quantitative strata:
\begin{align*}
&S^k_{\e,r}(u) := \Big\{x\in \ball{1}{p}:\, \text{there exists no $s \in [r,1[$ such that ${\ball{s}{x}}$ is $(k+1,\e)$-symmetric} \Big\},\\
&S^k_\e(u) := \bigcap_{r>0} S^k_{\e,r}(u).
\end{align*}
It then follows that $S^k(u) = \bigcup_{\e>0}S^k_\e(u)$. Quantitative stratification together with the $L^2$-approximation theorem in \cite[Section~7]{nv} and the $W^{1,p}$-Reifenberg theorem \cite[Section~3]{nv}, among other new techniques developed therein, enable Naber--Valtorta to conclude the above seminal results on stationary and minimising harmonic maps. We refer to Prof.~Valtorta's mini-course in the same winter school for a more detailed account on this topic.

\bigskip
\noindent
{\bf Acknowledgement}. The authors thanks the anonymous referee for careful reading and constructive suggestions.

I wish to express my deepest gratitude and admiration to Bob Hardt, who introduced me to the beautiful subjects of harmonic maps and geometric measure theory. I feel extraordinarily fortunate to have been Bob’s final postdoctoral mentee from 2017 to 2020 at Rice University, Houston. His kindness, profound mathematical insight, and genuine care for those around him have left an enduring impression on me and continue to shape me, both as a mathematician and as a person.

I thank Thierry De Pauw for inviting me to deliver this minicourse. I have learnt a great deal from our many mathematical discussions over the past two years. My sincere thanks also go to Fang-Hua Lin for his insightful discussions and generous guidance.

SL is supported by NSFC Projects 12201399, 12331008, and 12411530065, Young Elite Scientists Sponsorship Program by CAST 2023QNRC001, National Key Research $\&$ Development Programs 2023YFA1010900 and 2024YFA1014900, Shanghai Rising-Star Program 24QA2703600, Qi-Guang Scholarship, and Shanghai Frontier Research Institute for Modern Analysis.

	\bigskip
	\noindent
	{\bf Statement of competing interests}. 
	The author declares that there is no conflict of interest.

	\noindent
	{\bf Statement of data availability}.
	Our manuscript has no associated data.

	\noindent
	{\bf AI Statement}. No AI tools in any form have been used in the writing and preparation of this manuscript.

\end{document}